\begin{document}
\title[removable singularities]
{Removable singularities and Harnack inequality for nonlinear H\"ormander degenerate subelliptic equations}

\author[ Jiayi Qiang, Yawei Wei, Mengnan Zhang]
{ Jiayi Qiang, Yawei Wei, Mengnan Zhang}

\address{Jiayi Qiang \newline
School of Mathematical Sciences\\ Nankai University\\ Tianjin 300071 China}
\email{2120230051@mail.nankai.edu.cn}

\address{Yawei Wei \newline
School of Mathematical Sciences and LPMC\\ Nankai University\\ Tianjin 300071 China}
\email{weiyawei@nankai.edu.cn}

\address{ Mengnan Zhang  \newline
School of Mathematical Sciences\\ Nankai University\\ Tianjin 300071 China}
\email{1120220030@mail.nankai.edu.cn}
\thanks{Acknowledgements: This work is supported by the NSFC under the grands 12271269, and  the Fundamental Research Funds for the Central Universities.}

\subjclass[2020]{35H20, 35J70}

\keywords{Degenerate quasi-linear subelliptic equations; Weak solution; Removable singularities; Harnack inequality; H\"older continuity.
}

\begin{abstract}
 This paper concerns the quasilinear subelliptic function derived from H\"ormander vector fields.  Based on the significant  work of J. Serrin in \cite{SER}, M. Meier in \cite{MM1}, and  L. Capogna, D. Danielli and N. Garofalo in \cite{LC1,LDN}, we obtain the removable singularities and Harnack inequality by a sharp Sobolev inequalities under weaker integrability of coefficients in structure conditions. Furthermore, we get the H\"older continuity when domain $\Omega$ is equiregular.
\end{abstract}

\maketitle
\numberwithin{equation}{section}
\newtheorem{Thm}{Theorem}[section]
\newtheorem{Rem}{Remark}[section]
\newtheorem{lemma}{Lemma}[section]
\newtheorem{corollary}{Corollary}[section]
\newtheorem{assumption}{Assumption}[section]
\newtheorem{definition}{Definition}[section]
\newcommand{\R}{\mathbb{R}}
\newcommand{\B}{\mathbf{B}}
\newcommand{\e}{{\mathcal {E}}}
\newcommand{\M}{{\mathcal{H}}}
\newcommand{\Y}{{\mathbf{P}}}
\allowdisplaybreaks
\section{Introduction}

In this paper, let $U$ be  an connected open domain of $\mathbb{R}^{n}$ with $n\geq 2$.  Consider a system of real smooth vector
fields $ X:=(X_1,X_2,...,X_m)$ defined on $U$, satisfying   the H\"ormander's condition, see Definition \ref{H}.  We study a kind of nonilinear degenerate subelliptic equations as follows,

\begin{equation}\label{equa}
\sum_{i=1}^{m} X^{*}_{i}A_{i}(x,u,{X}u)= B(x,u,{X}u), \qquad x \in   \Omega.
\end{equation}
Here $\Omega $ is a bounded open subset of $U$ and $X^*_{i}$ is the formal adjoint of $X_i$.  Since $X_i$ is  a real smooth vector field  defined in $U$.
 then $X_i$ and $X_i^{*}$ can be seen as a differential operator, $i=1,2,\cdots, m$,
\begin{equation}\label{eq:26}
X_i=\sum\limits_{j=1}^{n}h_{ij}(x)\partial_{x_j},
\end{equation}
\begin{equation}\label{eq:26}X^{*}_i \varphi =-\sum\limits_{j=1}^{n}\partial_{x_j}(h_{ij}(x)\varphi).\end{equation}
with $h_{ij}\in C^{\infty}(U)$.

The   functions $A$ and $B$ are defined on $ \Omega\times\mathbb{R}\times\mathbb{R}^n$, with values in $\mathbb{R}^n$ and $\mathbb{R}$, respectively, and $A=(A_1,\cdots,A_m)$.
We assume that
equation \eqref{equa}  satisfies the following structure conditions:
\begin{equation}\label{cond}
\left\{
\begin{aligned}
&|A(x,u,{X}u)|\leq a |{X}u|^{p-1}+b(x)|u|^{p-1}+e(x), \\
&{X}u \cdot A(x,u,{X}u)\geq |{X}u|^p-d(x)|u|^p-g(x),
\end{aligned}
\right.
\end{equation}
\begin{equation}\tag{\theequation$'$}\label{cond_prime}
 |B(x,u,Xu)|  \leq  c_0|Xu|^p+c(x)|Xu|^{p-1}+d(x)|u|^{p-1}+f(x).
\end{equation}
where  $1<p\leq \tilde{v}$, and $\tilde{v}$ is the generalized M\'etivier index of $\Omega$, see \eqref{eq:31}. In the foregoing estimates,  $a>0, c_0 \geq 0$ are constants, and the functions $ b,c,d,e,f,g$,  defined on $\Omega$,  are measurable, almost everywhere nonnegative, and   belong to the following Lebesgue classes:
\begin{equation}\label{func}
b,e \in \left\{
\begin{aligned}
&L^{\frac{\tilde{v}}{p-1}} (\Omega)  \quad  &if \;p<\tilde{v}, \\
&L^{\frac{\tilde{v}}{\tilde{v}-1-\varepsilon}} (\Omega)  \quad  &if \;p=\tilde{v},
\end{aligned}
\right.
\quad c \in L^{\frac{\tilde{v}}{1-\varepsilon}} (\Omega), \quad d,f,g \in L^{\frac{\tilde{v}}{p-\varepsilon}} (\Omega),
\end{equation}
for some $\varepsilon \in (0, 1) $.
Since the norm of $b,e$
 changes as the value of $p$ varies,  in the following article, the norm defined above is abbreviated as $|| b||_{\Omega}, ||e||_{\Omega}, || c||_{\Omega}, || d||_{\Omega}, || f||_{\Omega}, || g||_{\Omega}$.

 We give  some  notations and Definitions about H\"ormander vector fields.
   \begin{definition}[Commutator]
The commutator of two smooth vector fields
$$X=\sum_{j=1}^{n}a_j(x)\partial_{x_j};\ \  Y=\sum_{j=1}^{n}b_j(x)\partial_{x_j}$$
is defined as: $$[X,Y]=XY-YX=\sum_{i,j=1}^{n}(a_j\partial_{x_j}b_i-b_j\partial_{x_j}a_i)\partial_{x_i}.$$
 \end{definition}

    \begin{definition}[H\"ormander's condition] \label{H}
 If there exists a smallest positive integer $i_0$ such that the  smooth vector fields $X_1,X_2,...,X_m$ together with their commutators of length at most $i_0$ span $\mathbb{R}^{n}$ at each point in $U$, then we call $X=(X_1,X_2,...,X_m)$ satisfies the   H\"ormander's condition. We also refer to $X=(X_1,X_2,...,X_m)$ as H\"ormander's vector fields.
 \end{definition}
Let Lie($X$) be the Lie algebra generated by vector fields $X_1, X_2,\cdots, X_m$ over $\mathbb{R}$. For $l\in \mathbb{N}^{+}$, we define $$\textup{Lie}^{l}(X):=span\{[X_{i_1},\cdots, [X_{i_{j-1}},X_{i_j}]]\ |1\leq i_j\leq m,j\leq l\}.$$  The H\"ormander's condition gives that $\textup{Lie}(X)(x)=\{Z(x)|Z\in \textup{Lie}(X) \}=T_{x}(U)$ for all $x\in U$. This means, for each point $x\in U$, there  exists a minimal integer $i(x)\leq i_0$ such that
$$\textup{Lie}^{i(x)}(X)(x)=\{Z(x)|Z\in \textup{Lie}^{i(x)}(X) \}=T_{x}(U) .$$
The integer $i(x)$ is known as the degree of nonholonomy at $x$.

For $x\in U$ and $1\leq j\leq i(x)$, we set $V_{j}(x):=\textup{Lie}^{j}(X)(x)$. It follows that
$$ \{0\}=V_0(x)\subset V_1(x)\subset\cdots \subset V_{i(x)-1}(x) \subsetneq V_{i(x)}(x)=T_{x}(U).$$ Then, we define
\begin{equation}\label{eq:32} v(x):=\sum_{j=1}^{i(x)}j(v_j(x)-v_{j-1}(x))\end{equation} as the pointwise homogeneous dimension at $x$, and
 \begin{equation}\label{eq:31}\tilde{v}:=\max_{x\in \overline{\Omega}}v(x) \end{equation}
  as the generalized M\'etivier index of $\Omega$.  Next,  we introduce some notations to present the precise definition of local homogeneous dimension. Let $J=(j_1,\cdots,j_k)$ be  a multi-index with length $|J|=k$, where $1\leq j_i\leq m$ and $1\leq k\leq r$. We assign a commutator $X_J$ of length $k$ such that
  $$X_{J}:=[X_{j_1},[X_{j_2},\cdots[X_{j_{k-1},X_{j_k}}]\cdots]],$$
  and set $$X^{k}:=\{X_{J}|J=(j_1,\cdots,j_k),\ 1\leq j_i\leq m, \ |J|=k\} $$
the collection of all commutators of length $k$. Let $Y_{1},\cdots,Y_{l}$ be an enumeration of the components of $X^{(1)},\cdots,X^{(i_0)}$. We say $Y_i$ has formal degree $d(Y_i)=k$ if $Y_i$ is an element of $X^{k}$. If $I=(i_1,i_2,\cdots,i_n)$ is a n-tuple of integers with $1\leq i_k\leq l$, we define
$$ d(I):=d(Y_{i_1})+d(Y_{i_2})+\cdots+d(Y_{i_n}),$$
and the so-called Nagel-Stein-Wainger polynomial
$$ \Lambda(x,r):=\Sigma_{I}|\lambda_{I}(x)r^{d(I)}|,$$
where $\lambda_{I}(x):=\det(Y_{i_1},Y_{i_2},\cdots,Y_{i_n})(x)$, and $I=(i_1,i_2,\cdots,i_n)$ ranges in the set of n-tuples
satisfying $1\leq i_k\leq l$.

Let $\Omega\subset\subset U$ be a bounded open set.
 According to [\cite{LC1}, (3.4), p. 1166], the local homogeneous dimension $Q$ relative to the bounded set $\Omega$ is precisely defined as
follows:
 \begin{equation}\label{eq:41}
 Q:=\max\{d(I)|\lambda_{I}(x)\neq0 \ \text{and} \  x\in \overline{\Omega}\}=\sup\limits_{x\in \Omega}\left(\lim_{r\to +\infty}\frac{\ln \Lambda(x,r)}{\ln r}\right). \end{equation}

\begin{definition}[Equiregular]\label{equiregular}
 We call a point $x\in U$ is regular if, for every $1\leq j\leq i(x)$, the dimension $v_{j}(y)$ is
a constant as $y$ varies in an open neighbourhood of $x$. Moreover, for any subset $\Omega\subset U$, we say $\Omega$ is equiregular if every point of $\Omega$ is regular. For the equiregular connected subset $\Omega$, the pointwise homogeneous dimension $v(x)$ is a constant $v$.
\end{definition}

\begin{definition}[H\"ormander operators]\label{HO}
If $X_0, X_1,\cdots,X_m $ is a system of H\"ormander vector fields in $U$, then
\begin{equation}\label{eq:33}
L=\sum_{j=1}^{m} X_j^2+X_0\end{equation}
is called a H\"ormander operator in $U$. If $X_0\equiv0$, that is if $$L=\sum_{j=1}^{m} X_j^2,$$
 then $L$ is called a sum of squares of H\"ormander vector fields.\end{definition}

The vector field $X_0$ in \eqref{eq:33} is often called ``drift", from the physical interpretation of $L$ as a transport-diffusion operator, hence operators like  \eqref{eq:33} can also be called
``operators with drift", when we want to stress the effective presence of this term. The fact that the vector field $X_0$ is required, together with the $X_i$ $(i = 1, 2,\cdots, m)$, to fulfill H\"ormander's condition, means that the first order operator $X_0$ cannot be thought as a lower
order term, but must be considered as belonging to the principal part of $L$. In some sense, $X_0$
``weights" as a second order derivative, analogously to the time derivative in the heat operator $\partial^2_{xx}-\partial_ t$.

There are many H\"ormander operators, such as Grushin operator, Kohn Laplacian, Kolmogorov operator, and so on.
Next, we will consider  a type of Grushin type opterator in $\mathbb{R}^n$, which was also studied in \cite{BFI}.
Given
\begin{equation}\label{eq:35}X=\left(\partial {x_1},\cdots,\partial {x_h}, \left(\sum_{i=1}^{h} {x_i}^2\right)^{\frac{k'}{2}}\partial {z_1}, \cdots, \left(\sum_{i=1}^{h} {x_i}^2\right)^{\frac{k'}{2}}\partial {z_l}\right),\end{equation} with $h+l=n$,  $h\geq 2$, $l\geq 1$ and $k'\in \{\mathbb{N}\cup \{0\}\}$.
Then
 \begin{equation}\label{eq:34}
 P_{k'}:= -\sum_{i=1}^{h}\frac{\partial^2}{\partial {x_i}^2}- \left(\sum_{i=1}^{h} {x_i}^2\right)^{k'}\sum_{j=1}^{l}\frac{\partial^2}{\partial {z_j}^2},\end{equation}
is  a type of Grushin type opterator in $\mathbb{R}^n$.

The study of nonlinear degenerate elliptic equations derived from H\"ormander's vector fields has progressed significantly  since H\"ormander's celebrated work \cite{LH} on hypoellipticity, such as \cite{GBFEMS,GBF,LPR}, etc.  Among them,  Folland and Stein in \cite{GBFEMS} studied the regularity problem of the $\bar{\partial}_{b}$ operator on the Heisenberg group  and obtained the optimal subelliptic estimate.
Subsequently,  Folland in \cite{GBF}   generalized this precise result to the sub-Laplacian on stratified Lie groups.  Later, Rothschild  and Stein in \cite{LPR} established the  Rothschild-Stein  lifting and approximation theorem,  based on which  the fundamental study of degenerate elliptic operators has made great progress and development. For H\"ormander vector fields,  the Sobolev inequality has been studied in \cite{LC,LC1,GSP,LDN}, and the Poincar\'e-Wirtinger type inequality has been obtained in \cite{GL,DJ}. The existence of fundamental solutions and  related problems  have been researched widely in  \cite{BFI, SAM, ASC}, etc. In recent research work, we mention that Hua Chen, Hong-Ge Chen, and Jin-Ning Li in \cite{GSP} carried out  highly meaningful work, which helps advance the development of research on degenerate operators under general H\"ormander vector fields. Specifically, they  derived sharp Sobolev inequalities on $W_{X,0}^{k,p}(\Omega)$ defined in Definition \ref{Def2.2},  where the critical Sobolev exponent depends on the generalized M\'etivier index.  What's more, they
 established the isoperimetric inequality, logarithmic Sobolev inequalities, Rellich-Kondrachov compact embedding theorem, Gagliardo-Nirenberg
inequality, Nash inequality, and Moser-Trudinger inequality in the context of general H\"ormander vector fields.
 W. Bauer,   K. Furutani and   C. Iwasaki in  \cite{BFI} examined a class of Grushin type operators as \eqref{eq:34} using methods that rely on an appropriate coordinate transformation and incorporate the theory of Bessel functions, modified Bessel functions, and Weber's second exponential integral.   As a result, they explained  the  geometric framework, proved some analytic properties such as essential self-adjointness, and gave  an explicit expression of the fundamental solution.

The removable singularities  of weak solutions has still been extensively  studied.  For the classical elliptic equation,  Serrin in \cite{SER}  researched the removable singularities for weak solutions of nonlinear elliptic equation.  Later,  the work in  \cite{FN}  improved the removability results of Serrin and  obtained best possible conditions for removable singularity at the point for  solutions of nonlinear elliptic  equations of divergent form.   For  nonlinear degenerate  elliptic equations arising from H\"ormander vector fields,  the authors  in \cite{LC1}  established  sharp capacitary estimates for Carnot-Carath\'eodory rings,  and  the  removable singularities  of weak solutions was studied   on this basis.   Bo Wang in \cite{BW}  researched the  removable singularities for viscosity solutions to  degenerate elliptic Pucci operator on the Heisenberg group, where the second order term is
obtained by a composition of degenerate elliptic Pucci operator with the degenerate
Heisenberg Hessian matrix.  Shanming Ji, Zongguang Li and Changjiang Zhu in \cite{SJZL}  did an outstanding work with rich conclusions, which greatly extends the existing research findings. Focused  on the multi-dimensional Burgers equations on the whole space $\mathbb{R}^n$, they showed the removable singularity property of unbounded profiles. Apart from that, to understand the possible singularity and the asymptotic stability of unbounded profiles on the whole space $\mathbb{R}^n$,  they  introduced the generalized derivatives (distributions), and then  established  the asymptotic behavior of
large perturbations around the unbounded profiles.
 Finally we mention that the similar studies have been recently made for quasi(super)harmonic functions in \cite{AB} and  Navier-Stokes equations  in \cite{HKEU},  etc.

The Harnack inequality is an important research topic in the field of partial differential equations and has been widely studied.  The anothers in \cite{LDN}  gave  the Harnack inequality for non-negative  weak solutions of quasilinear degenerate equations derived from H\"ormander vector fields.  Similar results are also obtained in \cite{AM,GL1,LWPN,FF}, etc. Among them,  Ferrari in \cite{FF}  proved that Harnack inequality holds true for a very general class of two-weight subelliptic operators given by a system of H\"ormander vector fields. In  Resent years,  based on the study on Green functions of the subelliptic operators  in combination with the fundamental tools of Poincar\'e and Sobolev embeddings,  the Harnack inequality for weighted subelliptic $p$-Laplace equation with a potential term  was  obtained in \cite{TDD}.

For a kind of quasilinear degenerate elliptic equations as follows,
\begin{equation}\label{equa1}
-{div}\;A(x,u,\nabla u)= B(x,u,\nabla u), \qquad x \in   \Omega,
\end{equation}
where $\Omega $ is a bounded open subset of $\mathbb{R}^n$.
Under the conditions \eqref{cond}, \eqref{cond_prime} with $c_0=0$ and \eqref{func} with $\tilde{v}=n$ hold,
J. Serrin in \cite{SER} studied the   removable singularities and  Harnack inequality  of solutions of \eqref{equa1}  by the iteration technique introduced by Moser in references \cite{JM} and \cite{JM1},  while making  strong use of the general Sobolev inequalities.  M. Meier in \cite{MM1}  dealt with  removable singularities and  Harnack inequality  for weak solutions of quasilinear elliptic systems.  Even in the case of a single equation, M. Meier in \cite{MM1} generalized  previous work of J. Serrin, admitting that the nonlinearity $B(x,u,\nabla u)$ satisfies \eqref{cond_prime} with $c_0\geq0$ or \eqref{thmcond1}.

For the H\"ormander  vector fields $ X=(X_1,X_2,...,X_m)$ in $U$,  let \begin{equation}\label{eq:39}Lu=\sum^{m}_{i=1}X_i^{*}X_iu=0,\end{equation} where $X_i^{*}$ is the formal adjoint of $X_i$. L. Capogna, D. Danielli and N. Garofalo in \cite{LC1},  \cite{LDN} extended  the results in \cite{SER} to a kind of quasilinear degenerate subelliptic equations \eqref{equa} under the assumption $\text{(H)}$.

 $\text{(H)}$: there is a fundamental solution $\Gamma(x,y)$   of \eqref{eq:39} satisfying
 \begin{equation}\label{eq:42}C\frac{d(x,y)^2}{B(x,d(x,y))}\leq \Gamma(x,y)\leq C^{-1}\frac{d(x,y)^2}{B(x,d(x,y))},\end{equation}
\begin{equation}\label{eq:43}|X\Gamma(x,y)|\leq  C^{-1}\frac{d(x,y)}{B(x,d(x,y))}.\end{equation}
 They overcame the substantial difficulties in the problem  from the subelliptic geometry and  established the following embedding theorem of Sobolev type:
 Let  $\Omega\subset \mathbb{R}^n$ be a bounded open set and denote by $Q$ the local homogeneous dimension relative to $\Omega$ defined in \eqref{eq:41}. Given $1\leq p\leq Q$ there  exist $C >0$ and $R_0 > 0$, such that for any $x_0\in \Omega$, $r\leq R_0$, we have
 $$ \left(\frac{1}{B(x_0,r)}\int_{B(x_0,r)}|u|^{q}dx\right)^{\frac{1}{q}}\leq Cr   \left(\frac{1}{B(x_0,r)}\int_{B(x_0,r)}|Xu|^{p}dx\right)^{\frac{1}{p}}$$
 for $ 1\leq q\leq \frac{Qp}{Q-p}$, and all $u\in W_{X,0}^{1,p}\left((B(x_0,r)\right)$.
  Under the conditions \eqref{cond}, \eqref{cond_prime} (where $c_0=0$) and \eqref{func} (with $\tilde{v}$  replaced by  Q) hold, they also obtained the  removable singularities  and  Harnack inequality  for  solutions of  \eqref{equa}. Apart from that, the work in \cite{LC1} also established  sharp capacitary estimates for Carnot-Carath\'eodory rings, and studied  the  local behavior  of  solutions  to \eqref{equa} having a  singularity at one point.

This paper concerns the removable singularities and Harnack inequality  for  quasilinear subelliptic equation. Firstly, we study the removable singularities for weak solutions of \eqref{equa}. Secondly, we obtain the Harnack inequality for non-negative bounded weak solutions of \eqref{equa}. Furthermore, we get the  H\"older continuity  when  domain $\Omega$ is equiregular. Finally, we  give an application on higher step Grushin type operator.

Now, we present the main results of this article.
\begin{definition}\label{definition1}
Let $U_1 \subset U$ be a bounded open set, and $\Sigma \subset U_1 $ be a compact set. For $1\leq s <+\infty$ we define  the s-capacity  of the condenser $(\Sigma, U_1 )$ as
$$cap_{s}(\Sigma, U_1 )=\inf\{\int_{ U} |X \psi|^s dx: \psi \in \mathnormal{C}^{\infty}_{0}(U_1 ),\psi \geq 1 \; \text{on}\; \Sigma \}. $$
We call  $cap_{s}(\Sigma)=0$ if there is a bounded open set $U_1$ such that $cap_{s}(\Sigma, U_1 )=0$.

\end{definition}
\begin{Thm}\label{Thmremovsing}
Let $U$ be an connected open domain of $\mathbb{R}^{n}$, $\Omega\subset U$ be  a bounded open domain, and  $\Sigma \subset U$ be a compact set with $cap_s(\Sigma)=0$ for some $s\in [p, \tilde{v}]$,  where $\tilde{v}$ is the generalized M\'etivier index of $\Omega$. Assume that  \eqref{cond}, \eqref{func} are satisfied, and that
\begin{equation}\label{thmcond1}
u \cdot B(x,u,Xu)  \leq (1-\theta)|Xu|^p+|u|\{c(x)|Xu|^{p-1}+d(x)|u|^{p-1}+f(x)\}
\end{equation}
holds for $a.e. \; x \in \Omega-\Sigma$, where $ \theta \in (0,1]$.  For any fixed $\delta>0$, if $u\in \mathnormal{W}^{1,p}_{X,loc}(\Omega-\Sigma) \cap L_{\frac{s(p-\theta)}{s-p}(1+\delta),loc}(\Omega)$ is a weak solution of \eqref{equa} in $\Omega-\Sigma$ and   $B(x,u,Xu)   \in L^{1}_{loc}(\Omega-\Sigma)$,
then $u \in \mathnormal{W}^{1,p}_{X,loc}(\Omega)$ is a weak solution of \eqref{equa} in $\Omega$.
\end{Thm}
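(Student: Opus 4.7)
The plan is to exploit the capacity-zero hypothesis on $\Sigma$ to build a family of test functions that shrinks onto $\Sigma$, use them in the weak formulation on $\Omega\setminus\Sigma$ to first show $u\in W^{1,p}_{X,loc}(\Omega)$ (the removability statement at the Sobolev level), and then pass to the limit in the weak formulation against arbitrary test functions supported in $\Omega$. The key analytical ingredient is that $\|X\psi_k\|_{L^s}\to 0$ for a suitable approximating sequence of cutoffs.

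\textbf{Step 1 (capacity cutoffs).} Since $cap_s(\Sigma)=0$, choose a bounded open set $U_1\supset\Sigma$ and by Definition~\ref{definition1} a sequence $\psi_k\in C_0^\infty(U_1)$ with $\psi_k\ge 1$ on $\Sigma$ (so we may assume $0\le \psi_k\le 1$ and $\psi_k\equiv 1$ on a neighborhood of $\Sigma$, by truncation and regularization) and $\int_U|X\psi_k|^s\,dx\to 0$. For an arbitrary relatively compact $\Omega'\subset\subset\Omega$, also fix $\eta\in C_0^\infty(\Omega)$ with $\eta\equiv 1$ on $\Omega'$. The functions $(1-\psi_k)\eta$ are supported in $\Omega\setminus\Sigma$ (up to small neighborhoods where $\psi_k=1$), so all test functions built from them are admissible in the weak formulation of \eqref{equa} on $\Omega\setminus\Sigma$.

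\textbf{Step 2 (uniform energy estimate, hence $u\in W^{1,p}_{X,loc}(\Omega)$).} Insert $\phi_k=u\,\eta^p(1-\psi_k)^p$ into the weak form $\int A\cdot X\phi_k\,dx=\int B\,\phi_k\,dx$. Expanding $X\phi_k$ produces three terms; the coercivity in \eqref{cond} applied to the first term together with \eqref{thmcond1} applied to the right-hand side yields
\begin{equation*}
\theta\int\eta^p(1-\psi_k)^p|Xu|^p\,dx\le \mathcal{R}_k+\mathcal{S}_k,
\end{equation*}
where $\mathcal{S}_k$ collects the lower-order contributions controlled by $b,c,d,e,f,g$ via \eqref{func}, and $\mathcal{R}_k$ is the critical boundary term $p\bigl|\int u\,A\cdot X\psi_k\,(1-\psi_k)^{p-1}\eta^p\,dx\bigr|$. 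Using $|A|\le a|Xu|^{p-1}+b|u|^{p-1}+e$, Young's inequality absorbs $\epsilon|Xu|^p(1-\psi_k)^p\eta^p$ into the left-hand side, leaving terms of the form $|u|^p|X\psi_k|^p$, $|u|^{p-1}|X\psi_k|\,b\,\eta^p$, and $|u||X\psi_k|\,e\,\eta^p$. The first is handled by H\"older with conjugate exponents $s/p$ and $s/(s-p)$ (when $s>p$; the case $s=p$ is direct), giving
\begin{equation*}
\int|u|^p|X\psi_k|^p\eta^p\,dx\le\|X\psi_k\|_{L^s}^p\|\eta u\|_{L^{ps/(s-p)}}^p,
\end{equation*}
and the mixed terms are bounded similarly using the Lebesgue exponents in \eqref{func}; the $(1+\delta)$ slack in $L^{\frac{s(p-\theta)}{s-p}(1+\delta)}_{loc}$ furnishes the extra H\"older room needed to accommodate the lower-order coefficients. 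Since $\|X\psi_k\|_{L^s}\to 0$ and $\psi_k\to 0$ a.e. (the measure of $\Sigma$ vanishes), Fatou's lemma together with the uniform bound provides $\int_{\Omega'}|Xu|^p\,dx<\infty$, proving $u\in W^{1,p}_{X,loc}(\Omega)$.

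\textbf{Step 3 (passage to the limit in the weak formulation).} For an arbitrary $\phi\in C_0^\infty(\Omega)$, test with $\phi(1-\psi_k)$ in \eqref{equa} on $\Omega\setminus\Sigma$:
\begin{equation*}
\int A\cdot X\phi\,(1-\psi_k)\,dx-\int \phi\,A\cdot X\psi_k\,dx=\int B\,\phi\,(1-\psi_k)\,dx.
\end{equation*}
Thanks to Step 2, $|A|\in L^{p/(p-1)}_{loc}(\Omega)$ by \eqref{cond} and \eqref{func}; dominated convergence then sends the first and last integrals to $\int A\cdot X\phi$ and $\int B\phi$ (the latter using $B\in L^1_{loc}(\Omega\setminus\Sigma)$ combined with the lower bound $u\cdot B\ge -|u|\{c|Xu|^{p-1}+d|u|^{p-1}+f\}-(1-\theta)|Xu|^p$ implied by \eqref{thmcond1}, and the corresponding upper bound extracted from the sign of $A\cdot Xu+u\cdot B$). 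The decisive term is the middle one, estimated by H\"older with exponents $s$ and $s/(s-1)$:
\begin{equation*}
\Bigl|\int\phi\,A\cdot X\psi_k\,dx\Bigr|\le\|\phi A\|_{L^{s/(s-1)}}\,\|X\psi_k\|_{L^s}\longrightarrow 0,
\end{equation*}
where $\phi A\in L^{s/(s-1)}_{loc}$ follows from $s\le\tilde{v}$, $p-1\le s-1$ and the integrability conditions \eqref{func} combined with $u\in W^{1,p}_{X,loc}(\Omega)$.

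\textbf{Main obstacle.} The hardest step is Step~2: the bookkeeping that matches the exponent $\frac{s(p-\theta)}{s-p}(1+\delta)$ to the Young--H\"older splits of every lower-order term stemming from \eqref{cond}, \eqref{thmcond1} and \eqref{func}, uniformly across the whole interval $s\in[p,\tilde{v}]$ (with the borderline $s=p$ requiring its own treatment via the $L^{\tilde{v}/(\tilde{v}-1-\varepsilon)}$ class in \eqref{func}). Once the energy estimate is in place, the limit procedure in Step~3 is routine given the Sobolev-class control supplied by the sharp embedding of Chen--Chen--Li.
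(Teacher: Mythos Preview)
Your architecture mirrors the paper's, and Step~3 is essentially correct. The gap is in Step~2: the naive test function $\phi_k=u\,\eta^p(1-\psi_k)^p$ does not close. After Young's inequality the critical boundary term is $\int|u|^p|X\psi_k|^p\eta^p\,dx$, and your H\"older split requires $u\in L^{ps/(s-p)}_{loc}$. But the hypothesis only supplies $u\in L^{\frac{s(p-\theta)}{s-p}(1+\delta)}_{loc}$, which is strictly weaker whenever $\delta<\theta/(p-\theta)$; the factor $(1+\delta)$ is not ``slack for the coefficients''---the entire exponent is the (generally sub-$\tfrac{sp}{s-p}$) starting integrability. There is also a circularity: your $\mathcal{S}_k$ contains $\int d|u|^p\eta^p(1-\psi_k)^p\,dx$, and with $d\in L^{\tilde v/(p-\varepsilon)}$ this forces a Sobolev bound on $\eta u$, i.e.\ precisely the $Xu\in L^p$ estimate you are trying to establish.

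The paper resolves both issues by testing with $\tilde\phi=(\eta\bar\eta)^p\,u\,(\bar u_k)^{t_0}(\bar u_k^{(l)})^{t-t_0}$, where $\bar u=|u|+k$ is truncated below at $k$ and above at $l$, and $t_0=p(q_0-1)\le 0$ with $q_0=\tfrac{p-\theta}{p}(1+\delta)\le 1$. The negative power $t_0$ is tuned so that, after H\"older with exponent $s$, the critical term needs exactly $\|\bar u_k\|_{L^{s(p-\theta)(1+\delta)/(s-p)}}$, matching the hypothesis. This yields an inequality of the form $\|\eta\,\bar u_k^{\,q}\|_{L^{p^*}}\le C\,\|\bar u_k^{\,q}\|_{L^p}$ for each $q\in[q_0,1]$, and one \emph{iterates} finitely many Sobolev steps from $q=q_0$ up to $q=1$ to reach $u,Xu\in L^p_{loc}$. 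A further bounded truncation $\tilde\phi^{(i)}$ is also required to justify testing the unbounded $\tilde\phi$ against $B\in L^1_{loc}$.
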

\begin{Rem}
We can see if \eqref{cond_prime} holds, then we can deduce that
$B(x,u,Xu)\in L^{1}_{loc}(\Omega-\Sigma)$ from $u \in \mathnormal{W}^{1,p}_{X,loc}(\Omega-\Sigma)$. It implies  that $B(x,u,Xu)\in L^{1}_{loc}(\Omega-\Sigma)$   represents a weaker structure condition.
\end{Rem}

\begin{corollary}
Let $U$ be an connected open domain of $\mathbb{R}^{n}$, $\Omega\subset U$ be  a bounded open domain, and  $\Sigma \subset U$ be a compact set with $cap_p(\Sigma)=0$.  Assume that  \eqref{cond}, \eqref{func} are satisfied,   and that
\begin{equation}\label{thmcond1}
u \cdot B(x,u,Xu)  \leq (1-\theta)|Xu|^p+|u|\{c(x)|Xu|^{p-1}+d(x)|u|^{p-1}+f(x)\}
\end{equation}
holds for $a.e. \; x \in \Omega-\Sigma$, where $ \theta \in (0,1]$.   If $u\in \mathnormal{W}^{1,p}_{X,loc}(\Omega-\Sigma) \cap L^{\infty}_{loc}(\Omega)$  is a weak solution of \eqref{equa} in $\Omega-\Sigma$ and  $B(x,u,Xu)   \in L^{1}_{loc}(\Omega-\Sigma)$,
then $u \in \mathnormal{W}^{1,p}_{X,loc}(\Omega)$ is a weak solution of \eqref{equa} in  $\Omega$.
\end{corollary}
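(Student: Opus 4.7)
The plan is to obtain the corollary as the endpoint case $s=p$ of Theorem \ref{Thmremovsing}. When $s$ is allowed to equal $p$, the integrability exponent $\frac{s(p-\theta)}{s-p}(1+\delta)$ formally becomes $+\infty$, so $u\in L^\infty_{loc}(\Omega)$ is the natural substitute for the weighted Lebesgue-class assumption appearing in the theorem. Moreover, the hypothesis $\mathrm{cap}_p(\Sigma)=0$ supplies, via Definition \ref{definition1}, a sequence of cutoffs $\eta_k\in C^\infty_0(U_1)$ with $\eta_k\equiv 1$ on a neighbourhood of $\Sigma$, $0\le\eta_k\le 1$, and $\|X\eta_k\|_{L^p(U_1)}\to 0$, which is exactly the $s=p$ instance of the cutoff sequence used inside the proof of Theorem \ref{Thmremovsing}.

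With this identification in mind, I would retrace the proof of Theorem \ref{Thmremovsing} and verify that it still runs at $s=p$ under the stronger hypothesis $u\in L^\infty_{loc}(\Omega)$. The core step there is to test \eqref{equa} against admissible multipliers of the form $\varphi\cdot(1-\eta_k)$ (or, following Serrin, suitable truncations involving powers of $u$) supported in $\Omega\setminus\mathrm{supp}(\eta_k)$, and then to send $k\to\infty$. The critical terms that arise have the schematic shape $\int |u|^{\alpha}|X\eta_k|^{\beta}\,dx$, possibly weighted by one of the coefficients $b,c,d,e,f,g$; in Theorem \ref{Thmremovsing} they are controlled by a Hölder split pairing $|X\eta_k|$ with an $L^s$ factor and $|u|^{\alpha}$ with the weighted Lebesgue factor. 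Under the assumption $u\in L^\infty_{loc}$ one simply bounds $|u|^{\alpha}\le \|u\|_{L^\infty}^{\alpha}$ on the fixed support and is left with integrals of the form $\int |X\eta_k|^p\,dx$, multiplied by norms of the coefficients from \eqref{func}; these vanish as $k\to\infty$.

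The remaining estimates run in parallel to Theorem \ref{Thmremovsing}. The coercive term $|Xu|^p$ from \eqref{cond} is absorbed on the left using $\theta>0$ in \eqref{thmcond1}; the hypothesis $B(x,u,Xu)\in L^1_{loc}(\Omega-\Sigma)$, combined with \eqref{thmcond1}, furnishes an $L^1$ control of $u\cdot B$ uniformly in $k$, so dominated convergence as $\eta_k\downarrow 0$ a.e.\ on $\Omega\setminus\Sigma$ delivers the weak formulation of \eqref{equa} over all of $\Omega$ together with the global regularity $u\in W^{1,p}_{X,loc}(\Omega)$. The main obstacle is precisely the endpoint degeneracy of the Hölder split at $s=p$: invoking the $L^\infty$ bound on $u$ is what circumvents it, and with this single modification no other step of the proof of Theorem \ref{Thmremovsing} requires change.
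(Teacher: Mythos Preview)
Your proposal is correct and matches exactly what the paper intends: the corollary is stated without proof in the paper, precisely because it is the endpoint $s=p$ of Theorem \ref{Thmremovsing}, and you have identified the only modification needed. In the proof of Theorem \ref{Thmremovsing} the integrability hypothesis on $u$ is invoked at a single spot, namely in the estimate
\[
\|\eta (X\bar\eta_v)\,v\|_{L^p(B_{2R}-\Sigma)} \le \|X\bar\eta_v\|_{L^s}\,\|v\|_{L^{sp/(s-p)}},
\]
and in the subsequent iteration \eqref{itera}; when $s=p$ and $u\in L^\infty_{loc}(\Omega)$ the function $v=\bar u_k^{q_0}(\bar u_k^{(l)})^{q-q_0}$ is bounded on $B_{2R}$, so this term is dominated by $\|v\|_{L^\infty}\|X\bar\eta_v\|_{L^p}\to 0$, and the iteration step becomes superfluous since $\|\bar u_k\|_{L^p(B_{2R})}$ is trivially finite. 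All other estimates in Steps 1 and 2 already use only $\|X\bar\eta_v\|_{L^s}$ with $s\ge p$, hence go through verbatim at $s=p$.
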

\begin{Thm}\label{Thmholderctn}
Let $U$ be an connected open domain of $\mathbb{R}^{n}$, $\Omega\subset U$ be  a bounded open domain, and  $\Sigma \subset U$ be a compact set with $cap_s(\Sigma)=0$ for some $s\in [p, \tilde{v}]$,  where $\tilde{v}$ is the generalized M\'etivier index of $\Omega$.  Assume that  \eqref{cond}, \eqref{cond_prime} and \eqref{func} are satisfied. If $u\in \mathnormal{W}^{1,p}_{X,loc}(\Omega-\Sigma)$ is a bounded weak solution of \eqref{equa} in $\Omega-\Sigma$,
then $u \in \mathnormal{W}^{1,p}_{X,loc}(\Omega)$ is a bounded weak solution of \eqref{equa} in  $\Omega.$
\end{Thm}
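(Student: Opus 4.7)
The plan is to reduce Theorem \ref{Thmholderctn} to Theorem \ref{Thmremovsing} by exploiting the $L^\infty$ bound on $u$ to upgrade \eqref{cond_prime} to the one-sided estimate \eqref{thmcond1}. Set $M := \|u\|_{L^\infty(\Omega-\Sigma)}$. Two auxiliary hypotheses of Theorem \ref{Thmremovsing} come for free from boundedness: $u \in L^q_{loc}(\Omega)$ for every $q < \infty$, which covers the Lebesgue-class requirement on $u$ demanded there, and H\"older's inequality against the classes \eqref{func} applied termwise to \eqref{cond_prime} produces $B(x,u,Xu) \in L^1_{loc}(\Omega-\Sigma)$.

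Multiplying \eqref{cond_prime} pointwise by $|u| \le M$ gives
\[
u \cdot B(x,u,Xu) \le c_0 M\,|Xu|^p + |u|\bigl\{c(x)|Xu|^{p-1} + d(x)|u|^{p-1} + f(x)\bigr\}.
\]
If $c_0 M < 1$, this is precisely \eqref{thmcond1} with $\theta := 1 - c_0 M \in (0,1]$, Theorem \ref{Thmremovsing} applies directly, and boundedness transfers from $\Omega - \Sigma$ to $\Omega$ because $\Sigma$ has null Lebesgue measure (the $s$-capacity zero hypothesis implies this in the subelliptic setting). The residual case $c_0 M \ge 1$ cannot be reduced to Theorem \ref{Thmremovsing} as stated; instead I would adapt its proof by replacing the natural test function $|u|^{r-1}u\cdot\eta$ with the weighted test function $\varphi = \eta\, e^{\lambda u}$ for $\lambda > c_0$. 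The chain-rule contribution $\eta\lambda e^{\lambda u}\,A \cdot Xu$ combined with the lower bound in \eqref{cond} produces a coercive term $(\lambda - c_0)e^{\lambda u}|Xu|^p \ge (\lambda - c_0)e^{-\lambda M}|Xu|^p$ that absorbs the $c_0 e^{\lambda u}|Xu|^p$ contribution from $|B|$ via \eqref{cond_prime}, while the remaining lower-order terms are disposed of by Young's inequality.

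The principal technical obstacle is then the limiting step across $\Sigma$: one tests additionally against capacitary cutoffs $\zeta_n$ arising from $cap_s(\Sigma) = 0$, with $\|X\zeta_n\|_{L^s} \to 0$, and must show that the cross term $\int A \cdot \varphi\, X\zeta_n$ vanishes. Using $|A| \le a|Xu|^{p-1} + b|u|^{p-1} + e$, this requires pairing $|X\zeta_n|$ in $L^s$ against the conjugate integrability supplied by \eqref{func}; the exponent range $s \in [p, \tilde{v}]$ in the capacity hypothesis is calibrated precisely to make this H\"older bookkeeping succeed. Once that is in hand, a Caccioppoli estimate for $Xu$ uniform in $n$ yields $u \in W^{1,p}_{X,loc}(\Omega)$, and the corresponding integral identity, obtained by passing to the limit in the weak formulation, certifies $u$ as a weak solution on all of $\Omega$; boundedness is preserved throughout since it is an essential sup on $\Omega-\Sigma$ and $\Sigma$ is null.
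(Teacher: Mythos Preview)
Your core idea---the exponential weight in the test function to absorb the $c_0|Xu|^p$ contribution from \eqref{cond_prime}---is exactly what the paper uses. The paper does not split into cases on $c_0M$; it takes $\phi = (\eta\bar\eta_v)^p\, u\, e^{c_0|u|}$ from the outset (with $\bar\eta_v$ the capacitary cutoff of Lemma~\ref{caps}) and derives a Caccioppoli bound in one pass. Your reduction to Theorem~\ref{Thmremovsing} when $c_0M<1$ is correct but dispensable, and once you move to the exponential test function in the second case you are essentially reproducing the paper's argument anyway.

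There is, however, a genuine circularity in your limiting step. You propose to show the cross term $\int A\cdot\varphi\,X\zeta_n$ vanishes by invoking $|A|\le a|Xu|^{p-1}+\cdots$ and pairing against $\|X\zeta_n\|_{L^s}$; but controlling $\int |Xu|^{p-1}|X\zeta_n|$ requires $Xu\in L^p_{loc}(\Omega)$, which is precisely what you are trying to establish. The paper sidesteps this by building the capacitary cutoff into the test function at the $p$-th power: with $\phi=(\eta\bar\eta_v)^p u\,e^{c_0|u|}$, the cross term $p(\eta\bar\eta_v)^{p-1}X(\eta\bar\eta_v)\cdot A$ has the right homogeneity to be absorbed via Young's inequality into the coercive term $(\eta\bar\eta_v)^p|Xu|^p$ on the left, leaving on the right only $\|X(\eta\bar\eta_v)\|_{L^p}^p$ and lower-order pieces, uniformly in $v$. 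Since $p\le s$ on a bounded domain, $\|X\bar\eta_v\|_{L^p}\to 0$ follows from $\|X\bar\eta_v\|_{L^s}\to 0$, and Fatou gives $\int_{B_R}|Xu|^p<\infty$. Only \emph{after} this does one pass to the limit in the weak formulation, where your H\"older pairing now succeeds because $A\in L^{p/(p-1)}_{loc}$ is in hand. Your test function $\varphi=\eta\,e^{\lambda u}$ is likewise missing the $p$-th power on the cutoff; without it the Young-absorption does not close.
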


\begin{Thm}\label{Thm5.1}
 Let $U$ be an connected open domain of $\mathbb{R}^{n}$, $\Omega\subset U$ be  a bounded open domain,  and  $u\in \mathnormal{W}^{1,p}_{X,loc}(\Omega)$ be a non-negative bounded weak solution of equation \eqref{equa}. If conditions  \eqref{cond}, \eqref{cond_prime} and \eqref{func} hold, then there exist positive constants $C$,  $\rho_{\Omega}$  such that for any $B_{R}$ with $B_{4R}\subset\subset \Omega$ satisfying $R\leq \frac{\rho_{\Omega}}{4},$

when $p<\tilde{v}$, for some $\gamma_0>0$
\begin{equation}\label{571}
 \sup\limits_{B_{R}} u \leq C\left({\frac{|B_{2R}|}{R^{\tilde{v}}}}\right)^{\frac{2}{\gamma_0}} \left(\inf\limits_{B_{R}} u +\tilde k(R)\right),
\end{equation}
in additional,  if $\Omega$ is  equiregular connected set, we have
\begin{equation}\label{eq:45}
 \sup\limits_{B_{R}} u \leq C \left(\inf\limits_{B_{R}} u +\tilde k(R)\right);
\end{equation}

 when $p=\tilde{v}$, for some $\gamma_0>0,$  for any $\delta>0$,
 \begin{equation}
 \sup\limits_{B_{R}} u \leq C\left({\frac{|B_{2R}|}{R^{\tilde{v}+\delta}}}\right)^{\frac{2}{\gamma_0}} \left(\inf\limits_{B_{R}} u +\tilde k(R)\right),
\end{equation}
Here $$C=C(n,p,\varepsilon ,a,c_0,R,||d||_{\Omega},||c||_{\Omega},||b||_{\Omega},||u||_{L^\infty (\Omega)})$$
and $$\tilde k(R)=(||e||_{B_{4R}}+|B_{R}|^{\frac{\varepsilon}{2\tilde v}}||f||_{B_{4R}})^{\frac{1}{p-1}}+(|B_{R}|^{\frac{\varepsilon}{2\tilde v}}||g||_{B_{4R}})^{\frac{1}{p}},$$
where $||c||,||b||,||d||,||e||,||f||$ and $||g||$ are as defined in the corresponding norm of \eqref{func}.
\end{Thm}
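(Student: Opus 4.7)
The plan is to adapt the classical Moser iteration of Serrin (as carried out in the subelliptic setting by Capogna--Danielli--Garofalo in \cite{LC1,LDN}), but now with the sharper Sobolev inequality of \cite{GSP} which involves the generalized M\'etivier index $\tilde v$ instead of the local homogeneous dimension $Q$. Throughout I would work with the shifted function $\bar u := u + \tilde k(R)$, so that $\bar u \geq \tilde k(R)>0$, which turns the inhomogeneous terms in \eqref{cond}--\eqref{cond_prime} into absorbable error terms scaled by $\tilde k(R)$.

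The first step is to derive two families of Caccioppoli-type inequalities by testing \eqref{equa} against $\varphi = \eta^p \bar u^{\beta}$ with $\eta \in C_0^\infty(B_{4R})$ and $\beta$ ranging over both positive and (sufficiently) negative values. Using \eqref{cond}, \eqref{cond_prime} and Young's inequality, and absorbing the $c_0 |Xu|^p$ term thanks to the $L^\infty$ bound $\|u\|_{L^\infty(\Omega)}$ (which enters the final constant $C$), one obtains an estimate of the shape
\begin{equation*}
\int \eta^p |X \bar u^{(\beta+p-1)/p}|^{p}\,dx \;\leq\; C(\beta)\int \bigl(|X\eta|^p + \eta^p H(x)\bigr)\,\bar u^{\beta+p-1}\,dx,
\end{equation*}
where $H$ collects the coefficients in \eqref{func}. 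The coefficient functions are then controlled by H\"older's inequality together with the integrability in \eqref{func}: the extra factor $|B_R|^{\varepsilon/(2\tilde v)}$ encoded in $\tilde k(R)$ is exactly what makes the lower-order terms harmless after scaling.

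The second step is the iteration itself. Feeding the Caccioppoli estimate into the sharp Sobolev inequality on $W^{1,p}_{X,0}(B)$ (in the form recalled in the introduction, with critical exponent governed by $\tilde v$) yields a reverse H\"older chain for $\bar u^{\beta+p-1}$. Iterating over a geometric sequence of radii between $R$ and $2R$ and powers $\beta_k \to +\infty$ gives the sup estimate
\begin{equation*}
\sup_{B_R}\bar u \;\leq\; C \Bigl(\tfrac{|B_{2R}|}{R^{\tilde v}}\Bigr)^{1/\gamma_0}\Bigl(\fint_{B_{2R}} \bar u^{\gamma_0}\,dx\Bigr)^{1/\gamma_0}
\end{equation*}
for some small $\gamma_0>0$, while iterating with negative $\beta_k \to -\infty$ produces the corresponding inf estimate with $\bar u^{-\gamma_0}$. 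The volume factor $(|B_{2R}|/R^{\tilde v})^{1/\gamma_0}$ is the honest record of how the Sobolev constant on a subelliptic ball depends on the (non-equiregular) doubling ratio; when $\Omega$ is equiregular this ratio is bounded, yielding \eqref{eq:45}. In the borderline case $p=\tilde v$ the Sobolev inequality is available only with an arbitrarily small loss in the exponent, which is precisely what forces the extra $R^{-\delta}$.

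The crux, and the step I expect to be hardest, is the bridge between positive and negative powers: one needs to show that
$\bigl(\fint_{B_{2R}} \bar u^{\gamma_0}\bigr)^{1/\gamma_0} \leq C \bigl(\fint_{B_{2R}} \bar u^{-\gamma_0}\bigr)^{-1/\gamma_0}.$
Following Moser, this is done by testing \eqref{equa} with $\varphi = \eta^p \bar u^{1-p}$ to derive a Caccioppoli-type inequality for $w := \log \bar u$, showing $\int_{B_{2R}} |Xw|^p \leq C |B_{2R}| R^{-p}(1+\text{coeff.\ terms})$. Combined with the subelliptic Poincar\'e inequality (valid on metric balls for H\"ormander vector fields), this places $w$ in a local $\mathrm{BMO}$-class on Carnot--Carath\'eodory balls, and a John--Nirenberg lemma in this setting (or equivalently the Bombieri--Giusti--Serrin iteration lemma) supplies the required exponential integrability of $w$, hence the bridge. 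Assembling the three inequalities yields \eqref{571}; the equiregular and $p=\tilde v$ refinements follow by the observations above.
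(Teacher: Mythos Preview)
Your overall architecture is exactly the paper's: shift to $\bar u=u+\tilde k(R)$, derive Caccioppoli estimates for powers $\bar u^\beta$, feed them into the sharp Sobolev inequality of \cite{GSP}, run Moser iteration for positive and negative exponents, and bridge the two via a log-estimate plus John--Nirenberg on Carnot--Carath\'eodory balls. The treatment of the $p=\tilde v$ and equiregular refinements is also right.

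There is, however, one genuine gap. You propose to test with $\varphi=\eta^p\bar u^\beta$ and then ``absorb the $c_0|Xu|^p$ term thanks to the $L^\infty$ bound''. Writing this out, the term from $B$ in \eqref{cond_prime} contributes $c_0\int\eta^p\bar u^{\beta}|Xu|^p$, while the good term from \eqref{cond} is $|\beta|\int\eta^p\bar u^{\beta-1}|Xu|^p$. Using $\bar u\le M:=\|\bar u\|_{L^\infty}$ you can only absorb when $|\beta|>c_0M$, but the iteration must start at small $|\beta|$ (in the paper one has merely $|\beta|\ge \tfrac{p(p-1)}{2\tilde v-p}$, independent of $c_0,M$), and the log step uses $\beta=1-p$. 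So the naive absorption fails precisely where you need it.

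The paper fixes this with the Meier device from \cite{MM1}: the test function is
\[
\tilde\phi=\eta^p\,\bar u^{\beta}\,e^{(\operatorname{sign}\beta)\,c_0\bar u},
\]
whose gradient produces an extra term $(\operatorname{sign}\beta)c_0\,\eta^p\bar u^{\beta}e^{(\operatorname{sign}\beta)c_0\bar u}Xu\cdot A$. Via \eqref{cond} this yields $+c_0\int\eta^p\bar u^\beta e^{(\operatorname{sign}\beta)c_0\bar u}|Xu|^p$ on the left, which \emph{exactly cancels} the $c_0|Xu|^p$ contribution from $B\tilde\phi$ on the right, for every $\beta\neq 0$. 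The $L^\infty$ bound still enters, but only to bound $e^{c_0\bar u}\le e^{c_0M}$ as a harmless multiplicative constant. With this correction your outline goes through and coincides with the paper's proof.
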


\begin{Thm}\label{Thmholderctn1}
 Let $U$ be an connected open domain of $\mathbb{R}^{n}$, $\Omega\subset U$ be  a bounded open domain,  and  $u\in \mathnormal{W}^{1,p}_{X,loc}(\Omega)$ be a bounded weak solution of equation \eqref{equa}. Assume $p<\tilde{v}$, conditions  \eqref{cond}, \eqref{cond_prime} and \eqref{func} hold,  and $\Omega$ is equiregular connected set. Additionally, if $ b,e \in L_{\frac{\tilde{v}}{p-1-\varepsilon}} (\Omega)$,  then for any compact subset $\Omega_0 \subset \Omega$, $u$ is H\"older continuous in $\Omega_0.$
\end{Thm}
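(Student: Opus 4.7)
The plan is to deduce H\"older continuity from the equiregular Harnack inequality \eqref{eq:45} of Theorem \ref{Thm5.1} via the classical Serrin/Moser oscillation-decay argument, with the extra integrability hypothesis $b,e\in L^{\tilde v/(p-1-\varepsilon)}$ used exactly to force the tail term $\tilde k(R)$ to decay polynomially in $R$.

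Fix a compact subset $\Omega_0\subset\subset\Omega$, a point $x_0\in\Omega_0$, and a ball $B_{4R}=B(x_0,4R)\subset\subset\Omega$ with $R\le \rho_\Omega/4$. Put $M(r):=\sup_{B_r}u$, $m(r):=\inf_{B_r}u$, $\omega(r):=M(r)-m(r)$. The first step is to observe that $v_1:=M(4R)-u$ and $v_2:=u-m(4R)$ are non-negative bounded weak solutions in $B_{4R}$ of equations of the same form \eqref{equa}: substituting $u=M(4R)-v_1$ (resp.\ $u=m(4R)+v_2$) into \eqref{equa} produces new structure functions $(\tilde a,\tilde c_0,\tilde c)=(a,c_0,c)$ and modified $(\tilde b,\tilde d,\tilde e,\tilde f,\tilde g)$ which, by the elementary inequalities $(A+B)^{p-1}\le C_p(A^{p-1}+B^{p-1})$ and $(A+B)^p\le 2^{p-1}(A^p+B^p)$, are dominated by $C(\|u\|_{L^\infty})$ times the original coefficients. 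In particular the new $\tilde e$ has the schematic form $\mathrm{const}\cdot b+e$, so it still lies in $L^{\tilde v/(p-1-\varepsilon)}$: this is the reason the hypothesis is imposed on \emph{both} $b$ and $e$.

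Applying the equiregular form \eqref{eq:45} of Theorem \ref{Thm5.1} to $v_1$ and $v_2$ on the ball $B_R$ yields
\begin{align*}
 M(4R)-m(R)&\le C\bigl(M(4R)-M(R)+\tilde k(R)\bigr),\\
 M(R)-m(4R)&\le C\bigl(m(R)-m(4R)+\tilde k(R)\bigr),
\end{align*}
and adding and rearranging gives the oscillation decay
\[
 \omega(R)\le \gamma\,\omega(4R)+K\,\tilde k(R),\qquad \gamma:=\tfrac{C-1}{C+1}\in(0,1).
\]
The third step estimates $\tilde k(R)$ by a positive power of $R$. Because $\Omega$ is equiregular and connected, the pointwise homogeneous dimension is a constant $v=\tilde v$ and the Nagel-Stein-Wainger ball-box estimate gives $|B_r|\asymp r^{\tilde v}$ for $r\le R_0$. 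By H\"older's inequality on $B_{4R}$,
\[
 \|e\|_{L^{\tilde v/(p-1)}(B_{4R})}\le |B_{4R}|^{\varepsilon/\tilde v}\,\|e\|_{L^{\tilde v/(p-1-\varepsilon)}(\Omega)}\le C R^{\varepsilon},
\]
while the $f$- and $g$-terms in $\tilde k(R)$ already carry the factor $|B_R|^{\varepsilon/(2\tilde v)}\asymp R^{\varepsilon/2}$. Hence $\tilde k(R)\le C R^{\eta}$ for some $\eta>0$ depending only on $p,\varepsilon,\tilde v$, and we obtain
\[
 \omega(R)\le \gamma\,\omega(4R)+C R^{\eta}.
\]

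Finally, iterating this inequality along the geometric sequence $R_k=4^{-k}R_0$ and invoking the standard Campanato-type lemma produce $\omega(r)\le C r^{\alpha}$ with $\alpha=\min\{\eta,\log_4(1/\gamma)\}$, and since the constants depend only on $\|u\|_{L^\infty}$, the data, and $\mathrm{dist}(\Omega_0,\partial\Omega)$, the estimate is uniform for $x_0\in\Omega_0$; this is the desired H\"older continuity on $\Omega_0$. The principal obstacle is the first (translation) step: the careful bookkeeping required to show that $v_1,v_2$ satisfy structure conditions \eqref{cond},\eqref{cond_prime} with coefficient norms controlled by $\|u\|_{L^\infty}$ \emph{and} with the enhanced integrability of the $e$-term preserved, so that Theorem \ref{Thm5.1} and the decay estimate of $\tilde k(R)$ apply uniformly. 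Once this is done, the remainder of the argument is a direct Serrin-type iteration.
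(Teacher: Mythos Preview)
Your proposal is correct and follows the same overall strategy as the paper: translate to the non-negative functions $v_1=M(4r)-u$, $v_2=u-m(4r)$, apply the equiregular Harnack inequality of Theorem~\ref{Thm5.1}, add the two resulting estimates to obtain an oscillation decay $\omega(r)\le\gamma\,\omega(4r)+C\tilde k(r)$, show $\tilde k(r)\le C r^\eta$, and conclude by the standard iteration lemma (Lemma~\ref{lemma3.7} in the paper).

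There is one technical difference worth noting. The paper does \emph{not} use Theorem~\ref{Thm5.1} as a black box: it modifies the definition of $\tilde k(r)$ to carry an extra factor $|B_r|^{\varepsilon/(2\tilde v)}$ on the $e$-term (see \eqref{eq:29}), which forces $\|\bar b_i\|\le\|b_i\|+|B_r|^{-\varepsilon/(2\tilde v)}$ and hence requires re-running the $I_4$ estimates inside the Harnack proof using the stronger norm $\|b_i\|_{L^{\tilde v/(p-1-\varepsilon)}}$ to keep the Harnack constant independent of $r$. Your route is more direct: you keep the original $\tilde k$ of Theorem~\ref{Thm5.1}, so the Harnack constant is automatically uniform, and you instead absorb the decay \emph{afterwards} by the H\"older estimate $\|e_i\|_{L^{\tilde v/(p-1)}(B_{4R})}\le |B_{4R}|^{\varepsilon/\tilde v}\|e_i\|_{L^{\tilde v/(p-1-\varepsilon)}(\Omega)}\le C R^{\varepsilon}$, using that $e_i\sim C(\|u\|_\infty)\,b+e$ inherits the enhanced integrability. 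Both arguments hinge on exactly the same hypothesis $b,e\in L^{\tilde v/(p-1-\varepsilon)}$; your version simply avoids reopening the proof of Theorem~\ref{Thm5.1}.
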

 For the Grushin type opterators in $\mathbb{R}^n$ as \eqref{eq:34},
 we consider the equation \begin{equation}\label{eq:36}P_{k'}(u)=f(x,z). \end{equation}


Recalling  \cite{BFI},   W. Bauer,   K. Furutani and   C. Iwasaki  examined a class of Grushin type operators as \eqref{eq:34}. Based on the use of polar coordinates and certain changes of variables they  reduced $P_{k'}$ to a simpler structure. After that, they applied the theory
of Bessel and modified Bessel functions together with Weber's second exponential integral
to derive an exact form of the fundamental solution of the  simpler structure. Finally, they  obtained  the  explicit expression of fundamental solution of $P_{k'}$ by returning back to
Cartesian coordinates.

\begin{lemma}[\cite{BFI}, Theorem 7.5]\label{Thm7.5}
Suppose that $(x,z)\neq(x',z')$, $q\in \mathbb{N}$ and set
\begin{equation}\label{eq:38}
\gamma=|x|^{k'+1}|x'|^{k'+1}, \ A=\frac{|x|^{2(k'+1)}+|x'|^{2(k'+1)}+(k'+1)^2|z-z'|^2}{2},\ \tau=\frac{\langle x,x'\rangle}{|x||x'|}.\end{equation}
\textup{(I)} If $l=2q$ is even, then
$$ (P_{k'})^{-1}\left((x,z),(x',z')\right)=\frac{k+1}{4\pi}F_{q,k'+1,h}(A,\gamma,\tau).$$
\textup{(II)} If $l=2q-1$ is odd, then
$$ (P_{k'})^{-1}\left((x,z),(x',z')\right)=\frac{\sqrt{2}}{4\pi}\int_{A}^{\infty}F_{q,k'+1,h}(v,\gamma,\tau)\frac{1}{\sqrt{v-A}}dv.$$
Here,  for $q\geq2$, $\displaystyle{F_{q,k'+1,h}(v,\gamma,\tau)=\left(\frac{-(k'+1)}{2\pi}\frac{\partial}{\partial v}\right)^{q-1}F_{1,k'+1,h}(v,\gamma,\tau)}$  and $F_{1,k'+1,h}(v,\gamma,\tau)$ is given as follows
$$F_{1,k'+1,h}(v,\gamma,\tau)=\frac{\Gamma(h/2)}{2{\pi}^{h/2}}\times \frac{(v+\sqrt{v^2-\gamma^2})^{1/(k'+1)}-(v-\sqrt{v^2-\gamma^2})^{1/(k'+1)}}{\sqrt{v^2-\gamma^2}\{(v+\sqrt{v^2-\gamma^2})^{1/(k'+1)}+(v-\sqrt{v^2-\gamma^2})^{1/(k'+1)}-2\gamma^{1/(k'+1)}\tau\}^{h/2}},$$
where $ \Gamma(h/2)/(2{\pi}^{h/2})=|S^{h-1}|^{-1}$ is the inverse of the volume of the $(h-1)$-dimensional Euclidean unit sphere.

\end{lemma}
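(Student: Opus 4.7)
Since this is quoted verbatim from \cite{BFI}, my plan would be to reconstruct the derivation sketched in the paragraph preceding the statement: exploit the symmetries of $P_{k'}$ to reduce the problem to a one–dimensional Sturm–Liouville problem, solve it explicitly in terms of (modified) Bessel functions, invert the Fourier transform in $z$ using Weber's second exponential integral, and finally rewrite the answer in Cartesian coordinates. The variables $\gamma$, $A$, $\tau$ in \eqref{eq:38} are the natural ones in which the answer takes a closed form: $\gamma$ is a scaled radial product adapted to the anisotropic dilations $(x,z)\mapsto(\lambda x,\lambda^{k'+1}z)$, $A$ is the corresponding anisotropic squared distance, and $\tau$ records the angular dependence.

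\textbf{Reduction to a Bessel problem.} I would first observe that $P_{k'}$ is invariant under translations in $z$ and rotations in the $x$-block. Taking the Fourier transform in $z$ with dual variable $\zeta\in\mathbb{R}^l$ converts $P_{k'}$ into the Schrödinger-type operator $-\Delta_x+|x|^{2k'}|\zeta|^2$ with radial potential in $x$. Passing to polar coordinates $x=r\omega$ and expanding in spherical harmonics of $S^{h-1}$ gives, for each angular mode, a one-dimensional radial operator $-\partial_r^2-(h-1)r^{-1}\partial_r+r^{2k'}|\zeta|^2+(\text{angular})/r^2$. The substitution $\rho=r^{k'+1}/(k'+1)$ converts the principal part into a modified Bessel operator, so the Green function for each mode is the canonical product $I_\nu K_\nu$ of modified Bessel functions, with indices determined by $h$ and $k'+1$.

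\textbf{Fourier inversion and parity in $l$.} The inverse Fourier transform in $\zeta$ reduces to integrating a modified Bessel function against a Gaussian in $|\zeta|$, which is exactly Weber's second exponential integral. This yields a closed form depending on $(v,\gamma,\tau)$, namely the function $F_{1,k'+1,h}$ written in the statement; higher values of the auxiliary parameter $q$ arise by differentiating in the spectral variable, which is how the prefactor $\bigl(-\tfrac{k'+1}{2\pi}\partial_v\bigr)^{q-1}$ appears. The form of the final answer bifurcates with the parity of $l$: when $l=2q$ is even, the inversion is direct and gives part (I); when $l=2q-1$ is odd, I would invoke the classical Hadamard method of descent, representing the odd-dimensional kernel as a one-parameter integral $\int_A^{\infty}(\cdot)(v-A)^{-1/2}\,dv$ of the even-dimensional one, which produces part (II). Returning from $(\rho,\omega,\zeta)$ to Cartesian coordinates is then purely algebraic bookkeeping.

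\textbf{Main obstacle.} The crux is to recognize that the Bessel-type radial equation arising after the $\rho=r^{k'+1}/(k'+1)$ substitution is exactly solvable, and that the ensuing $\zeta$-integral is precisely Weber's second exponential integral; once these two identifications are made, the rest is careful tracking of normalization constants (the factor $\Gamma(h/2)/(2\pi^{h/2})=|S^{h-1}|^{-1}$ comes from the spherical-harmonic expansion) and the even/odd dichotomy in $l$. Absent these exact integral identities, the kernel would remain only in integral form and the explicit closed formula of Lemma \ref{Thm7.5} would not emerge.
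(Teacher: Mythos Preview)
The paper does not prove this lemma at all: it is quoted verbatim from \cite{BFI} and stated without proof, as the bracketed citation ``[\cite{BFI}, Theorem 7.5]'' indicates. The only thing the present paper offers is the descriptive paragraph immediately preceding the statement, summarizing the method of \cite{BFI} (polar coordinates and a change of variables to reduce $P_{k'}$, then Bessel and modified Bessel functions together with Weber's second exponential integral, then back to Cartesian coordinates). Your sketch follows exactly that outline, so there is no discrepancy to flag; but there is also no ``paper's own proof'' here against which to compare your proposal. If you want to verify your reconstruction you will have to consult \cite{BFI} directly.
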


\begin{lemma}[\cite{BFI}, Corollary 7.6]
With the notation in Lemma \ref{Thm7.5} we have for $\gamma=0$, any $l\in \mathbb{N}$
$$ (P_{k'})^{-1}\left((x,z),(x',z')\right)=\frac{(k'+1)^{l-1}\Gamma\left(\frac{l}{2}+\frac{h-2}{2(k'+1)}\right)\Gamma(h/2)}{4\pi^{l/2+h/2}\Gamma\left(1+\frac{h-2}{2(k'+1)}\right)(2A)^{\frac{l}{2}+\frac{h-2}{2(k'+1)}}}.$$
\end{lemma}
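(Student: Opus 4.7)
The strategy is to specialize Lemma \ref{Thm7.5} (Theorem 7.5 of \cite{BFI}) to $\gamma = 0$ and reduce everything to elementary algebra. First I would evaluate $F_{1,k'+1,h}$ at $\gamma = 0$: since $\sqrt{v^{2}-\gamma^{2}} = v$, one has $(v-\sqrt{v^{2}-\gamma^{2}})^{1/(k'+1)} = 0$, $(v+\sqrt{v^{2}-\gamma^{2}})^{1/(k'+1)} = (2v)^{1/(k'+1)}$, and the cross-term $2\gamma^{1/(k'+1)}\tau$ vanishes. Thus $F_{1}$ collapses to a pure power of $v$, independent of the angular variable $\tau$:
\[
F_{1,k'+1,h}(v,0,\tau) = \frac{\Gamma(h/2)}{2\pi^{h/2}}\,2^{-(h-2)/(2(k'+1))}\,v^{-\alpha},\qquad \alpha := 1 + \frac{h-2}{2(k'+1)}.
\]
The disappearance of the $\tau$-dependence is exactly what explains why the Corollary's expression carries no angular data.

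The next step is to apply the iterated differential operator $\bigl(\tfrac{-(k'+1)}{2\pi}\tfrac{\partial}{\partial v}\bigr)^{q-1}$, using $\tfrac{d^{q-1}}{dv^{q-1}}v^{-\alpha} = (-1)^{q-1}\tfrac{\Gamma(\alpha+q-1)}{\Gamma(\alpha)}\,v^{-\alpha-q+1}$. The two $(-1)^{q-1}$ signs cancel, giving
\[
F_{q,k'+1,h}(v,0,\tau) = \Bigl(\frac{k'+1}{2\pi}\Bigr)^{q-1}\frac{\Gamma(\alpha+q-1)}{\Gamma(\alpha)}\cdot \frac{\Gamma(h/2)}{2\pi^{h/2}}\,2^{-(h-2)/(2(k'+1))}\,v^{-\alpha-q+1}.
\]
For even $l = 2q$, I would substitute $v = A$ directly into part (I) of Lemma \ref{Thm7.5}. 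For odd $l = 2q - 1$, I would plug the above into part (II) and evaluate
\[
\int_{A}^{\infty} v^{-\alpha-q+1}(v-A)^{-1/2}\,dv
\]
via the substitution $v = A(1+s)$, which converts it into a beta integral $A^{3/2-\alpha-q}\,B(1/2,\alpha+q-3/2) = A^{3/2-\alpha-q}\,\Gamma(1/2)\Gamma(\alpha+q-3/2)/\Gamma(\alpha+q-1)$. The factor $\Gamma(\alpha+q-1)$ produced by the integral cancels against the same factor in $F_{q}$.

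Finally I would unify the two parities using $\alpha+q-1 = l/2 + (h-2)/(2(k'+1))$ when $l = 2q$ and $\alpha+q-3/2 = l/2 + (h-2)/(2(k'+1))$ when $l = 2q-1$. In either case the Gamma ratio becomes $\Gamma(l/2+(h-2)/(2(k'+1)))/\Gamma(1+(h-2)/(2(k'+1)))$ and the power of $A$ becomes $A^{-(l/2+(h-2)/(2(k'+1)))}$; absorbing the residual $2^{-(h-2)/(2(k'+1))}$ into $(2A)^{-(\cdot)}$ then produces the stated formula. The main obstacle I anticipate is the odd-parity bookkeeping of constants: one must track the prefactor $\sqrt{2}/(4\pi)$ of Lemma \ref{Thm7.5}(II) together with $\Gamma(1/2) = \sqrt{\pi}$ from the beta integral and the argument shift from $\alpha+q-1$ to $\alpha+q-3/2$, verifying that the resulting powers of $2$ and $\pi$ match those of the even case so the two cases genuinely merge into the single expression asserted in the Corollary.
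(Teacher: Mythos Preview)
The paper does not supply its own proof of this lemma; it is quoted verbatim as Corollary~7.6 of \cite{BFI} and is used only as an auxiliary input for the application in Section~5. Your plan---specialize $F_{1,k'+1,h}$ to $\gamma=0$ so that it collapses to a pure power $v^{-\alpha}$ with $\alpha=1+\tfrac{h-2}{2(k'+1)}$, apply the iterated operator $\bigl(\tfrac{-(k'+1)}{2\pi}\partial_v\bigr)^{q-1}$ via the Pochhammer/Gamma identity, then plug into parts (I) and (II) of Lemma~\ref{Thm7.5} and reduce the odd-parity integral to a beta integral by the substitution $v=A(1+s)$---is exactly the natural (and essentially the only) way to derive the corollary from the theorem, and matches how such specializations are carried out in \cite{BFI}.

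The one genuine caution is the final bookkeeping of the numerical constants. Your outline correctly identifies the mechanism that makes the even and odd cases merge (the shift $\alpha+q-1\leadsto\alpha+q-\tfrac32$ paired with the $\Gamma(1/2)=\sqrt{\pi}$ from the beta integral), but when you actually execute it you should track the powers of $(k'+1)$, $2$, and $\pi$ independently rather than trusting that they automatically recombine: for instance, part~(I) contributes one explicit factor of $k'+1$ while part~(II) does not, and the $(k'+1)^{q-1}$ from the differential operator has a different relation to $l$ in the two parities. If in the end your exponent of $k'+1$ does not match the $(k'+1)^{l-1}$ printed here, compare with the original statement in \cite{BFI} before concluding you have made an error, since the present paper merely transcribes the formula.
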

\begin{Rem}For $(x',z')\in \Omega$, if $x'=0$, similar to the classical Laplace equation, we conclude that $(0,z')$
 is a removable singularity when $u$ is a higher-order infinitesimal of a fundamental solution of $P_{k'}$, as shown in \eqref{eq:37}. However, if $x'\neq 0$, we cannot derive the above precise result. This is because the cross terms between $x$ and $z-z'$, as well as between $x'$ and $z-z'$,  introduce difficulties into the analysis. When $x'\neq 0$, the question of at least to what order of higher-order infinitesimal of the fundamental solution $u$ must satisfy for $(x',z')$ to be a removable singularity  is still  an open problem.\end{Rem}


\begin{corollary}\label{thm4.1}
Let $\Omega$ be a bounded open domain of $ \mathbb{R}^n$, and  $\Sigma \subset \mathbb{R}^n$ be a compact set with $cap_s(\Sigma)=0$ for some $s\in [2, \tilde{v}]$, where $\tilde{v}$ is the generalized M\'etivier index of $\Omega$. Assume  $f  \in  L_{\frac{\tilde{v}}{2-\varepsilon}} (\Omega)$ for some $\varepsilon\in (0,1)$.   For any fixed $\delta>0$, if $u\in \mathnormal{W}^{1,2}_{X,loc}(\Omega-\Sigma) \cap L_{\frac{s}{s-2}(1+\delta),loc}(\Omega)$ be a weak solution of \eqref{eq:28} in $\Omega-\Sigma$,
then $u \in \mathnormal{W}^{1,2}_{X,loc}(\Omega)$ is a weak solution of \eqref{eq:28} in  $\Omega$.  In particular, if $s=2$ and $u\in L^{\infty}_{loc}(\Omega)$, the conclusion still hold.
\end{corollary}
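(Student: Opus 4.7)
The plan is to recognize Corollary \ref{thm4.1} as the specialization of Theorem \ref{Thmremovsing} to the linear case $p=2$ for a divergence-form subelliptic Poisson equation (the equation \eqref{eq:28} put in the form $\sum_{i=1}^m X_i^* X_i u = f$). With the choices $A_i(x,u,Xu):=X_i u$ and $B(x,u,Xu):=f(x)$, the equation \eqref{eq:28} is exactly an instance of \eqref{equa}, so the task reduces to checking that this trivial choice of coefficients fits the structure framework used by Theorem \ref{Thmremovsing}.

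First I would verify \eqref{cond} with $a=1$, $b\equiv e\equiv 0$ and $d\equiv g\equiv 0$: indeed $|A|=|Xu|=|Xu|^{p-1}$ and $Xu\cdot A=|Xu|^2=|Xu|^p$, so both inequalities in \eqref{cond} hold with equality. Next, for \eqref{thmcond1}, I would take $\theta=1$ together with $c\equiv d\equiv 0$; the condition then collapses to $u\cdot f(x)\le |u|\cdot f(x)$, which is immediate because $f\ge 0$ a.e. The integrability hypothesis \eqref{func} specializes to $f\in L^{\tilde v/(2-\varepsilon)}(\Omega)$, which is precisely the assumption of the corollary, while the other coefficient functions being identically zero belong vacuously to any Lebesgue class. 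Finally, $B=f\in L^{\tilde v/(2-\varepsilon)}_{loc}(\Omega-\Sigma)\subset L^1_{loc}(\Omega-\Sigma)$ holds since $\Omega$ is bounded.

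With $p=2$ and $\theta=1$ the integrability exponent $\frac{s(p-\theta)}{s-p}(1+\delta)$ appearing in Theorem \ref{Thmremovsing} reduces exactly to $\frac{s}{s-2}(1+\delta)$, which matches the space in which $u$ is assumed to lie. An application of Theorem \ref{Thmremovsing} then yields $u\in W^{1,2}_{X,loc}(\Omega)$ together with the fact that $u$ solves \eqref{eq:28} in all of $\Omega$. For the endpoint case $s=2$, where the exponent $\frac{s}{s-2}$ degenerates, I would instead appeal to the corollary immediately following Theorem \ref{Thmremovsing} (the $\mathrm{cap}_p(\Sigma)=0$ statement with $u\in L^\infty_{loc}(\Omega)$), applied with $p=2$ and exactly the same identifications of $A$ and $B$.

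There is essentially no substantive obstacle: the argument is a bookkeeping exercise confirming that the linear Grushin-type operator fits the nonlinear structure conditions in the simplest possible way, with the choice $\theta=1$ being what permits the gradient term to drop out of the right-hand side of \eqref{thmcond1}. The only mildly delicate point would be matching the sign convention in \eqref{eq:28} to the divergence-form convention of \eqref{equa}, which is immediate from the definition \eqref{eq:26} of the formal adjoint $X_i^*$ and the identity $\sum_i X_i^* X_i u = -\mathrm{div}_X(Xu)$ underlying \eqref{eq:39}.
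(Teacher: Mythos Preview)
Your approach is essentially identical to the paper's: both simply recognize that equation \eqref{eq:28} fits the framework of Theorem \ref{Thmremovsing} with $p=2$, $\theta=1$, $a=1$, and $b=c=d=e=g\equiv 0$, and then invoke that theorem (and its $L^\infty$ corollary for the case $s=p=2$). One small correction: the right-hand side $f$ of \eqref{eq:28} is not assumed nonnegative, so the verification of \eqref{thmcond1} should read $u\cdot f(x)\le |u|\,|f(x)|$, taking the structure function to be $|f|\in L^{\tilde v/(2-\varepsilon)}(\Omega)$ rather than $f$ itself; with that adjustment your argument goes through exactly as written.
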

\begin{Thm}\label{thm4.2}
Let $\Omega$ be a bounded open domain of $ \mathbb{R}^n$, and  $\{(0,z')\}\subset \Omega$.  Let $u\in \mathnormal{W}^{1,2}_{X,loc}(\Omega-\{(0,z')\})$ be a weak solution of \eqref{eq:28} in $\Omega-\{(0,z')\}$. Assume $f  \in  L_{\frac{h+l(k'+1)}{2-\varepsilon}} (\Omega)$ for some $\varepsilon\in (0,1)$.
  If for any fixed $\tilde{\delta}> \frac{lk'}{2(k'+1)}\frac{h+l(k'+1)-2}{h+l(k'+1)}$
\begin{equation}\label{eq:37}
u (x,z)=O\left(\frac{1}{(|x|^{2(k'+1)}+(k'+1)^2|z-z'|^2)^{\frac{l}{2}+\frac{{h-2}}{{2}(k'+1)}-\tilde{\delta}}}\right), \ \ \text{as} \  |\zeta|, |\varsigma|\to 0,
\end{equation}
where  $\zeta, \varsigma$ denotes the component of distance between  $(x,z)$ and $(0,z')$, respectively,
then $u \in \mathnormal{W}^{1,2}_{X,loc}(\Omega)$  is a weak solution of \eqref{eq:28} in $\Omega$.
\end{Thm}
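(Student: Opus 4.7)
My plan is to reduce the removable-singularity claim to Corollary \ref{thm4.1} applied with $\Sigma = \{(0,z')\}$. Since the equation is linear (take $A_i = X_i u$, $B = -f$) one has $p=2$ and can take $\theta=1$ in condition \eqref{thmcond1}, so the exponent $\frac{s(p-\theta)}{s-p}$ in Theorem \ref{Thmremovsing} specializes to $\frac{s}{s-2}$, and two things have to be verified for a suitable pair $(s,\delta)$ with $s\in[2,\tilde v]$ and $\delta>0$: $cap_s(\Sigma)=0$, and $u\in L^{\frac{s}{s-2}(1+\delta)}_{\mathrm{loc}}(\Omega)$.

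First I would identify the generalized M\'etivier index of $\Omega$ for the Grushin system \eqref{eq:35}. Away from the characteristic locus $\{x=0\}$ the fields already span $\mathbb{R}^n$, whereas at points of $\{x=0\}$ one must iterate commutators $k'+1$ times to recover the $\partial_{z_j}$-directions; so $v(x)=n$ off $\{x=0\}$ while $v(x)=h+l(k'+1)$ on $\{x=0\}$. Since $(0,z')\in \Omega$ this gives $\tilde v = h+l(k'+1)$. For the capacity bound, I would use the Grushin pseudo-distance $\tilde d(x,z):=(|x|^{2(k'+1)}+(k'+1)^2|z-z'|^2)^{1/(2(k'+1))}$ and a smoothed logarithmic cutoff $\psi_\epsilon$ which is $1$ on $\{\tilde d\le \epsilon\}$ and supported in $\{\tilde d\le R\}$, satisfying $|X\psi_\epsilon|\lesssim (\tilde d\log(R/\epsilon))^{-1}$. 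An anisotropic polar substitution $x=\rho\omega$, $z-z'=\sigma\xi$, $\sigma=\rho^{k'+1}t$ shows that $\{\tilde d\le r\}$ has Lebesgue measure of order $r^{\tilde v}$ and yields $\int|X\psi_\epsilon|^{\tilde v}\,dx\,dz = O((\log(R/\epsilon))^{-(\tilde v-1)})\to 0$, so $cap_{\tilde v}(\Sigma)=0$; more generally $cap_s(\Sigma)=0$ for any $s\in[2,\tilde v]$.

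The central step is the integrability. On compact subsets of $\Omega-\Sigma$ it follows from $u\in W^{1,2}_{X,\mathrm{loc}}$ together with the sharp Sobolev embedding $W^{1,2}_X\hookrightarrow L^{2\tilde v/(\tilde v-2)}$ of Chen--Chen--Li \cite{GSP}. Near $(0,z')$, the decay hypothesis \eqref{eq:37} bounds $|u|^q$ by a constant times $(|x|^{2(k'+1)}+(k'+1)^2|z-z'|^2)^{-q(\alpha-\tilde\delta)}$ with $\alpha=\tfrac{l}{2}+\tfrac{h-2}{2(k'+1)}=\tfrac{\tilde v-2}{2(k'+1)}$, and I would estimate
\[
\int_{\tilde d\le r}\bigl(|x|^{2(k'+1)}+(k'+1)^2|z-z'|^2\bigr)^{-q(\alpha-\tilde\delta)}\,dx\,dz
\]
using the same anisotropic polar substitution. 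The $t$-integral then separates and the $\rho$-integral near $0$ is convergent exactly when $q<\tilde v/(\tilde v-2-2(k'+1)\tilde\delta)$. Choosing $q=\frac{s(1+\delta)}{s-2}$ with $s\in[2,\tilde v]$ and $\delta>0$ (and respecting the Sobolev constraint so that the away-from-singularity exponent is also covered), the elementary algebra shows that such a pair $(s,\delta)$ exists precisely when $\tilde\delta$ exceeds the stated threshold $\tfrac{lk'}{2(k'+1)}\cdot\tfrac{\tilde v-2}{\tilde v}$, the threshold arising from balancing the near-singularity decay exponent against the admissible range of $s$.

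With both hypotheses verified, Corollary \ref{thm4.1} yields $u\in W^{1,2}_{X,\mathrm{loc}}(\Omega)$ and that $u$ is a weak solution of \eqref{eq:28} on all of $\Omega$. The main obstacle is the anisotropic integral computation in the Grushin geometry: one must handle the volume of sub-level sets of $\tilde d$, the scaling of $|X\psi_\epsilon|$, and the $(\rho,t)$-splitting uniformly, and then fit the resulting integrability exponent against the admissible window $(s,\delta)\in[2,\tilde v]\times(0,\infty)$ so as to extract the sharp lower bound on $\tilde\delta$.
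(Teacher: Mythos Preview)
Your overall plan---reduce to Corollary~\ref{thm4.1} with $s=\tilde v=h+l(k'+1)$, verify $\mathrm{cap}_{\tilde v}(\{(0,z')\})=0$, and check the required $L^{\frac{s}{s-2}(1+\delta)}_{\mathrm{loc}}$ integrability---is exactly the paper's strategy. Your logarithmic-cutoff computation of the capacity is more explicit than the paper, which simply asserts the capacity vanishes ``through calculation.''

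The real difference is in the integrability step, and here your argument diverges from the paper in a way you do not seem to notice. The paper does \emph{not} use anisotropic polar coordinates; instead it exploits the elementary comparison (valid for $|z-z'|$ small)
\[
|(x,z-z')|^{2(k'+1)}\le C\bigl(|x|^{2(k'+1)}+(k'+1)^2|z-z'|^2\bigr),
\]
to replace the Grushin bound in \eqref{eq:37} by the cruder Euclidean bound $|u(x,z)|\lesssim |(x,z-z')|^{-(\tilde v-2-2(k'+1)\tilde\delta)}$, and then invokes the standard criterion $|y|^{-\beta}\in L^q_{\mathrm{loc}}(\mathbb R^n)\iff \beta q<n$. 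This yields local $L^q$-integrability exactly when $\tilde\delta>\frac{lk'}{2(k'+1)}\cdot\frac{\tilde v-2}{\tilde v}$, which is the stated threshold. Your anisotropic integral, by contrast, is governed by $|\{\tilde d\le r\}|\sim r^{\tilde v}$, so your convergence condition reads $q(\tilde v-2-2(k'+1)\tilde\delta)<\tilde v$ rather than $<n$. Taking $s=\tilde v$ so that $q=\frac{\tilde v}{\tilde v-2}(1+\delta)$, this is satisfied for \emph{every} $\tilde\delta>0$ once $\delta$ is small, and no positive threshold appears. Consequently your assertion that ``the elementary algebra shows that such a pair $(s,\delta)$ exists precisely when $\tilde\delta$ exceeds the stated threshold'' is incorrect: your own computation proves strictly more than the theorem claims. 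The threshold in the statement is an artifact of the paper's lossy passage to the Euclidean distance (the loss being the gap $\tilde v-n=lk'$). Your route still establishes the theorem, but you should either reproduce the paper's Euclidean comparison if you want the threshold to emerge, or state plainly that your sharper estimate gives the improvement $\tilde\delta>0$.
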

\begin{Rem}
It is worth noting that $\tilde{v}(0,z')=h+l(k'+1)$.
\end{Rem}
\begin{Rem} It is well-known that  for classical  Laplace equation
\begin{equation}\label{eq:47}-\Delta u(y)=0 \ \ \ \ y\in\mathbb{R}^n\setminus \{y_0\} , \end{equation}
if $y_0$ is removable for the solutions $u$ to \eqref{eq:47}
  if only if  $$u(y)=\begin{cases}
  o\left(\ln|y-y_0|\right)(y\to y_0)\ \ \ & \textup{for}\  n=2,\\
o\left(|y-y_0|^{2-n}\right)(y\to y_0)  \ & \textup{for}\   \ n\geq 3.
\end{cases}$$
It is worth noting that when $k'=0$, we can see that the condition in Theorem \ref{thm4.2} becomes
 $\tilde{\delta}>0$, which is consistent with the result for the above classical Laplace equation.
\end{Rem}

\begin{corollary}\label{Thm1.7}
Let $\Omega$ be a bounded open domain of $ \mathbb{R}^n$,  and  $u\in \mathnormal{W}^{1,2}_{X,loc}(\Omega)$ be a non-negative bounded weak solution of equation \eqref{eq:28}. If $f  \in  L_{\frac{\tilde{v}}{2-\varepsilon}} (\Omega)$ for some $\varepsilon\in (0,1)$, then there exist positive constants $C$,  $\rho_{\Omega}$  such that for any $B_{R}$ with $B_{4R}\subset\subset \Omega$ satisfying $R\leq \frac{\rho_{\Omega}}{4},$
\begin{equation}\label{571}
 \sup\limits_{B_{R}} u \leq C\left({\frac{|B_{2R}|}{R^{\tilde{v}}}}\right)^{\frac{2}{\gamma_0}} \left(\inf\limits_{B_{R}} u +\tilde k(R)\right),\  \textup{ for some }\ \gamma_0>0,
\end{equation}in additional,  if $\Omega$ is  equiregular connected set, we have
\begin{equation}\label{eq:46}
 \sup\limits_{B_{R}} u \leq C \left(\inf\limits_{B_{R}} u +\tilde k(R)\right),\  \textup{ for some }\ \gamma_0>0.
\end{equation}
Here $$C=C(n,\varepsilon, R,||u||_{L^\infty (\Omega)})$$
and $$\tilde k(R)=|B_{R}|^{\frac{\varepsilon}{2\tilde v}}||f||_{L_{\frac{\tilde{v}}{2-\varepsilon}} (B_{4R})}.$$
\end{corollary}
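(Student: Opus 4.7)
The plan is to recognize Corollary \ref{Thm1.7} as the specialization of Theorem \ref{Thm5.1} to the linear equation \eqref{eq:28} with $p=2$, so that essentially no fresh analysis is required and the work reduces to bookkeeping. First I would put \eqref{eq:28} into the divergence form \eqref{equa} by taking $A_i(x,u,Xu) = X_i u$ and $B(x,u,Xu) = f(x)$. With these choices the structure condition \eqref{cond} holds trivially with $a=1$ and $b\equiv e\equiv 0$ in the first inequality, and with $d\equiv g\equiv 0$ in the second, since $Xu\cdot A(x,u,Xu)=|Xu|^{2}$. Likewise \eqref{cond_prime} reduces to $|B|=|f|$, i.e.\ $c_0=0$ and $c\equiv d\equiv 0$, and the integrability list \eqref{func} collapses to the single requirement $f\in L^{\tilde v/(2-\varepsilon)}(\Omega)$, which is precisely the hypothesis of the corollary.

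Next, since $u\in W^{1,2}_{X,loc}(\Omega)$ is a non-negative bounded weak solution, every hypothesis of Theorem \ref{Thm5.1} is satisfied with $p=2$. Applying \eqref{571} in the generic regime $2<\tilde v$, and \eqref{eq:45} when $\Omega$ is equiregular, gives
\begin{equation*}
\sup_{B_R} u \leq C\left(\frac{|B_{2R}|}{R^{\tilde v}}\right)^{2/\gamma_0}\left(\inf_{B_R} u+\tilde k(R)\right)
\qquad\text{and}\qquad
\sup_{B_R} u \leq C\left(\inf_{B_R} u+\tilde k(R)\right)
\end{equation*}
respectively, for every $B_R$ with $B_{4R}\subset\subset\Omega$ and $R\leq \rho_\Omega/4$. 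It remains only to simplify $\tilde k(R)$ and the constant $C$ under our specialization. Because $b\equiv e\equiv g\equiv 0$ and $p-1=1$, the general error term
$\tilde k(R)=(\|e\|_{B_{4R}}+|B_R|^{\varepsilon/(2\tilde v)}\|f\|_{B_{4R}})^{1/(p-1)}+(|B_R|^{\varepsilon/(2\tilde v)}\|g\|_{B_{4R}})^{1/p}$
collapses to
\begin{equation*}
\tilde k(R)=|B_R|^{\varepsilon/(2\tilde v)}\,\|f\|_{L^{\tilde v/(2-\varepsilon)}(B_{4R})},
\end{equation*}
matching the claim, and since $\|b\|_\Omega,\|c\|_\Omega,\|d\|_\Omega$ all vanish while $a,c_0$ are fixed absolute constants, the dependence of $C$ in Theorem \ref{Thm5.1} reduces to $C=C(n,\varepsilon,R,\|u\|_{L^\infty(\Omega)})$, as stated.

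Since the genuine analytical content --- Moser iteration, the sharp Sobolev embedding involving the generalized M\'etivier index $\tilde v$, and the Bombieri--De Giorgi type switch between $\sup$ and $\inf$ --- is entirely encapsulated in Theorem \ref{Thm5.1}, there is no substantive obstacle to overcome. The only minor care concerns the borderline case $p=2=\tilde v$, which would force an invocation of the $p=\tilde v$ alternative of Theorem \ref{Thm5.1} (with the extra $R^{-\delta}$ factor); however, in the natural setting of the corollary the H\"ormander vector fields are genuinely degenerate, so $\tilde v>2$ and the statement stays within the $p<\tilde v$ branch of \eqref{571}.
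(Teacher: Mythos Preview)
Your proposal is correct and follows essentially the same route as the paper: the paper's proof simply observes that \eqref{eq:28} fits the structural framework of Theorem~\ref{Thm5.1} with $p=2$, $a=1$, and $b=c=d=e=g\equiv 0$, then invokes that theorem directly. The only cosmetic differences are that the paper records $c_0=1$ rather than your $c_0=0$ (either works, since \eqref{cond_prime} is an upper bound) and justifies $p<\tilde v$ via the explicit chain $2<3\le n\le\tilde v$, using $n=h+l\ge 3$ from the Grushin setup, whereas you argue more informally that genuine degeneracy forces $\tilde v>2$.
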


\begin{corollary}\label{Thm1.8}Let $\Omega$ be a bounded open domain of $ \mathbb{R}^n$,  and  $u\in \mathnormal{W}^{1,2}_{X,loc}(\Omega)$ be a bounded weak solution of equation \eqref{eq:28}. Assume $\Omega$ is equiregular connected set  and $f  \in  L_{\frac{\tilde{v}}{2-\varepsilon}} (\Omega)$ for some $\varepsilon\in (0,1)$.
   Then for any compact subset $\Omega_0 \subset \Omega$, $u$ is H\"older continuous in $\Omega_0.$
\end{corollary}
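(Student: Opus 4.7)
The plan is to realize Corollary \ref{Thm1.8} as a direct specialization of Theorem \ref{Thmholderctn1} applied to the Grushin-type equation \eqref{eq:28} with exponent $p=2$. First I would verify that $P_{k'}u=f$ fits the divergence-form framework \eqref{equa}. From the definition \eqref{eq:35} of the Grushin vector fields and the formal-adjoint formula \eqref{eq:26}, each coefficient $h_{ij}$ is independent of the variable in whose direction the associated field differentiates (the first $h$ fields have constant coefficient $1$, while the last $l$ fields have $z$-independent coefficient $(\sum_i x_i^2)^{k'/2}$). Hence $X_i^*=-X_i$ for every $i$, and therefore $\sum_{i=1}^{n}X_i^*X_i=-\sum_{i=1}^{n}X_i^2=P_{k'}$. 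Setting $A_i(x,u,Xu):=X_iu$ and $B(x,u,Xu):=f(x)$ thus turns \eqref{eq:28} into a particular case of \eqref{equa} with $p=2$.

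Next I would check the structure conditions against \eqref{cond}, \eqref{cond_prime}, \eqref{func}. With the above choices, $|A|=|Xu|$ gives the upper bound in \eqref{cond} with $a=1$ and $b=e=0$; the identity $Xu\cdot A=|Xu|^2$ gives the lower bound in \eqref{cond} with $d=g=0$; and $|B|=f$ realizes \eqref{cond_prime} with $c_0=c=d=0$. All integrability assumptions in \eqref{func} are then automatic for the vanishing coefficients, whereas the requirement on $f$ is precisely the hypothesis $f\in L^{\tilde v/(2-\varepsilon)}(\Omega)$ of the corollary. The additional condition $b,e\in L^{\tilde v/(p-1-\varepsilon)}(\Omega)$ demanded by Theorem \ref{Thmholderctn1} is likewise trivial, since $b\equiv e\equiv 0$.

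Finally I would confirm the remaining hypothesis $p<\tilde v$. As recorded in the remark following Theorem \ref{thm4.2}, the pointwise homogeneous dimension satisfies $v(0,z')=h+l(k'+1)$ on the degeneracy locus, so using $h\geq 2$ and $l\geq 1$ one has $\tilde v\geq h+l(k'+1)\geq h+l\geq 3>2=p$. Combined with the assumed equiregularity of $\Omega$, every hypothesis of Theorem \ref{Thmholderctn1} is met, and the H\"older continuity of $u$ on an arbitrary compact subset $\Omega_0\subset\Omega$ follows immediately. The argument is essentially a translation of the equation into the abstract framework, so I do not expect any substantive obstacle; the deep analytic content has already been absorbed into Theorem \ref{Thmholderctn1}.
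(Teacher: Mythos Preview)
Your proposal is correct and follows essentially the same route as the paper: verify that \eqref{eq:28} fits the abstract framework \eqref{equa}--\eqref{func} with $p=2$, $a=1$, $b=c=d=e=g=0$, and then invoke Theorem \ref{Thmholderctn1}. One minor imprecision: your claim $\tilde v\geq h+l(k'+1)$ assumes $\overline{\Omega}$ meets the degeneracy set $\{x=0\}$, which need not hold; the paper instead uses the universally valid bound $\tilde v\geq n=h+l\geq 3$, which already gives $p=2<\tilde v$ and is the cleaner justification.
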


Here we highlight the contributions of this paper.  Firstly,  we  extended the results  in \cite{MM1} to degenerate subelliptic equations, obtaining  the  removable singularities  and  Harnack inequality for  solutions of  \eqref{equa}. Secondly, relative to the work in \cite{LDN, LC1},  we focus on general H\"ormander vector fields.  This poses great challenges for us, and we have to handle the related problem without assumption $\text{(H)}$.  Finally, in terms of details, it is worth mentioning that we   weaken the integrability of coefficients in structure conditions \eqref{func} by a sharp Sobolev inequalities, which is mainly reflected in $\tilde{v}\leq Q$.


 This paper is organized as follows. In Section 2, we mainly give some preliminaries.  In Section 3, we prove Theorem \ref{Thmremovsing}  and  \ref{Thmholderctn} to study removable singularities  of  weak  solutions for \eqref{equa}. In Section 4, we provide  the Harnack inequality and H\"older  continuity  for non-negative bounded weak solutions of  \eqref{equa} in  Theorem \ref{Thm5.1} and  Theorem \ref{Thmholderctn1}. In Section 5, we give an application  to a  higher step Grushin type operator and prove the Theorem \ref{thm4.2}.


\section{Preliminaries}
In this section, we will give some definitions and lemmas as a preparation, such as Sobolev spaces associated with the H\"ormander vector fields $X$, subunit metric, chain rule  and other auxiliary  knowledge.
\begin{definition}[Weak derivatives]\label{def2.1}
Let $U\subset\mathbb{R}^{n}$ be an open set and let $Y$ be a smooth vector
field in $U$. We say a given $u\in L_{loc}^{1}(U)$ is differentiable in weak sense with respect to $Y$ if there
exists a $g\in L_{loc}^{1}(U)$ such that for every $\varphi\in C_{0}^{\infty}(U)$,
$$\int_{U}g(x) \varphi dx=\int_{U}u(x) Y^{*}\varphi dx$$
where the transpose operator $Y$ is defined as follows: if
 $$Y\varphi(x)=\sum\limits_{j=1}^{n}h_{j}(x)\partial_{x_j}\varphi(x),$$
  then $$Y^{*}\varphi(x)=-\sum\limits_{j=1}^{n}\partial_{x_j}
  \left(h_{j}(x)\varphi(x)\right). $$ In this case we will write $g=Yu$.

\end{definition}
\begin{definition}[\cite{GSP}]\label{Def2.2}
Let $1\leq j_i\leq m$ and $J=(j_1,\cdots, j_l)$ denotes a multi-index with length $|J|=l$. We adopt the notation $X^{J}=X_{j_1}X_{j_2}\cdots X_{j_{l-1}}X_{j_l}$ for $|J|=l$, and $X^{J}=id$ for $|J|=0$.
For any $k\in\mathbb{N}^{+}$ and $p\geq 1$, we define the function space
$$\mathnormal{W}^{k,p}_{X}(\Omega)=\{u\in L^p(\Omega)| \ X^Ju\in L^p(\Omega),\   \forall J=(j_1,...j_s)\ \text{with}\ |J|\leq k\}, $$
and set the norm
$$||u||^p_{\mathnormal{W}^{k,p}_{X}(\Omega)}=\sum\limits_{|J|\leq k}||X^Ju||^p_{L^p(\Omega)}. $$  Here, $X^{J}$ is the weak derivatives.
Furthermore we define  $\mathnormal{W}^{1,p}_{X,0}(\Omega)$ as the closure of $C_{0}^{\infty}(\Omega)$ in $\mathnormal{W}^{1,p}_{X}(U)$.
\end{definition}

\begin{definition}
Assume that \eqref{cond},  \eqref{func} are satisfied for equation \eqref{equa}.
 Then a function $u \in \mathnormal{W}^{1,p}_{X,loc}(\Omega) $ is called a weak solution of \eqref{equa} in $\Omega$ if $B(x,u,Xu) \in L^1_{loc} (\Omega)$, and
\begin{equation}\label{weso}
 \int_{\Omega}A(x,u,{X}u)\cdot X\varphi-B(x,u,{X}u)\varphi dx=0
\end{equation}
holds for all test function \;$\varphi \in \mathnormal{C}^{\infty}_{0}(\Omega)$.
\end{definition}

\begin{definition}[(subunit metric)\cite{GSP}, Definition 2.1] \label{ccd}
 For any $x,y\in U$ and $\delta >0$, let $C(x,y,\delta)$ be the collection of absolutely continuous mapping $\varphi :[0,1]\to U$, such that $\varphi(0)=x,\varphi(1)=y$ and $$\varphi'(t) = \sum\limits_{i=1}^{m} a_i (t)(X_i)_{ \varphi(t)} $$ with $\sum\limits_{k=1}^{m} |a_k (t)|^2 \leq {\delta}^2$ for a.e. $t\in [0,1]$.
 The subunit metric $d(x,y)$ is defined by
$$ d(x,y):= \inf\{\delta >0 | \ \exists \  \varphi \in C(x,y,\delta) \ \text{with}\  \varphi(0)=x, \ \varphi(1)=y \}.$$
We also  denote the subunit ball  by $$B(x,R):=\{y\in U |d(x,y)<R\}.$$  When we don't emphasize $x$, we simply write $B(x,R)$ as $B_{R}$.

\end{definition}

Now, we introduce the following properties about generalized Sobolev spaces. For details, refer to  \cite{GSP}.
\begin{lemma}[\cite{GSP}, Proposition 2.8] \label{chain}
Let $U_1$ be an open subset of U. Suppose that $F\in C^1(\mathbb{R})$ with $F'\in L^{\infty}(\mathbb{R})$. Then for any $u\in\mathnormal{W}^{1,p}_{X}(U_1)$ with $ p\geq1$,we have
$$X_j(F(u))=F'(u)X_j u  \qquad  in \ \mathscr{D}'(U_1) \qquad  \text{for} \ j=1,...m.$$
Moreover,\\
(1)if $F(0)=0$, then $F(u)\in\mathnormal{W}^{1,p}_{X}(U_1); $ \\
(2)if $F(0)=0$ and $u\in\mathnormal{W}^{1,p}_{X,0}(U_1)$, then $F(u)\in\mathnormal{W}^{1,p}_{X,0}(U_1).$
\end{lemma}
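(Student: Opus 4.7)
The plan is to follow the standard Friedrichs-mollification scheme used for the classical Euclidean chain rule, adapted so that the variable-coefficient nature of the vector fields $X_j=\sum_i h_{ji}(x)\partial_{x_i}$ is handled correctly. For $u\in C^\infty(U_1)$ the identity $X_j(F(u))=F'(u)X_ju$ holds pointwise by the ordinary chain rule applied to each component $\partial_{x_i}$, so the whole problem is to extend this identity from the smooth case to $u\in W^{1,p}_X(U_1)$ by approximation and then to establish the two assertions (1) and (2) as corollaries.

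First I would localize to $U_1'\subset\subset U_1$ and set $u_\varepsilon:=u*\phi_\varepsilon$ for a standard mollifier $\phi_\varepsilon$. Plainly $u_\varepsilon\to u$ in $L^p(U_1')$. The only nontrivial technical input is the Friedrichs commutator lemma for smooth vector fields: even though $X_j$ does not commute with convolution when the $h_{ji}$ are not constant, one still has $\|X_j u_\varepsilon-(X_j u)*\phi_\varepsilon\|_{L^p(U_1')}\to 0$ as $\varepsilon\to 0$, so that $X_j u_\varepsilon\to X_j u$ in $L^p(U_1')$ for each $j$. This is proved by writing the commutator $[X_j,\phi_\varepsilon*]u$ explicitly in terms of finite differences $h_{ji}(x)-h_{ji}(x-\varepsilon y)$ and exploiting the local Lipschitz bound on the smooth coefficients $h_{ji}$; this is the real analytic heart of the argument.

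Next I would apply the pointwise chain rule to $u_\varepsilon$, giving $X_j(F(u_\varepsilon))=F'(u_\varepsilon)X_j u_\varepsilon$, and pass to the limit against a test function $\varphi\in C_0^\infty(U_1)$. For the left-hand side, the Lipschitz bound $|F(s)-F(t)|\le\|F'\|_\infty|s-t|$ gives $F(u_\varepsilon)\to F(u)$ in $L^p_{\mathrm{loc}}$, and hence distributional convergence of $X_j F(u_\varepsilon)$ to $X_j F(u)$. For the right-hand side I would split $F'(u_\varepsilon)X_j u_\varepsilon - F'(u)X_j u = \bigl(F'(u_\varepsilon)-F'(u)\bigr)X_j u_\varepsilon + F'(u)\bigl(X_j u_\varepsilon - X_j u\bigr)$; the second summand vanishes in $L^p_{\mathrm{loc}}$ by $F'\in L^\infty$ and the Friedrichs step, while the first goes to zero along a subsequence with $u_\varepsilon\to u$ almost everywhere, using continuity of $F'$ together with dominated convergence (with $\|F'\|_\infty|X_j u_\varepsilon|$ as a majorant extracted by the $L^p$ convergence of $X_j u_\varepsilon$). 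This yields the identity in $\mathscr{D}'(U_1)$.

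The two addenda are then soft consequences. For (1), $F(0)=0$ and the Lipschitz bound give $|F(u)|\le\|F'\|_\infty|u|\in L^p(U_1)$; combined with the identity just proved, $F(u)\in W^{1,p}_X(U_1)$. For (2), choose $\varphi_k\in C_0^\infty(U_1)$ with $\varphi_k\to u$ in $W^{1,p}_X(U_1)$; since $F(0)=0$, each $F(\varphi_k)$ is compactly supported in $U_1$, and the chain rule applied to $\varphi_k-u$ together with Lipschitz continuity of $F$ shows $F(\varphi_k)\to F(u)$ in $W^{1,p}_X(U_1)$. Each compactly supported Lipschitz function $F(\varphi_k)$ can itself be mollified inside $U_1$ to produce a $C_0^\infty(U_1)$ approximation, placing $F(u)$ in the closure $W^{1,p}_{X,0}(U_1)$. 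The only real obstacle throughout is the Friedrichs commutator estimate; once that is in hand, the chain rule, part (1), and part (2) all follow by routine limit arguments.
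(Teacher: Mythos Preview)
The paper does not supply its own proof of this lemma: it is quoted verbatim as Proposition~2.8 of \cite{GSP} and used as a black-box preliminary. There is therefore nothing in the paper to compare your argument against.

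That said, your outline is the standard and correct route to the chain rule for variable-coefficient vector fields. The one genuinely nontrivial ingredient you correctly isolate is the Friedrichs commutator lemma $\|X_j(u*\phi_\varepsilon)-(X_ju)*\phi_\varepsilon\|_{L^p_{\mathrm{loc}}}\to 0$; once that is in hand, the pointwise chain rule on $u_\varepsilon$ passes to the limit exactly as you describe, and parts (1) and (2) follow from the Lipschitz bound $|F(s)-F(t)|\le\|F'\|_\infty|s-t|$ together with a second mollification of the compactly supported Lipschitz functions $F(\varphi_k)$. One minor point worth making explicit in a full write-up: in the passage $F'(u_\varepsilon)X_ju_\varepsilon\to F'(u)X_ju$ (and the analogous step in part~(2)), the dominated-convergence argument for the term $(F'(u_\varepsilon)-F'(u))X_ju_\varepsilon$ requires a majorant that is itself only known to converge in $L^p$, so strictly speaking one invokes either Vitali's theorem or the generalized dominated convergence theorem rather than the ordinary one; this is routine but should be stated.
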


\begin{lemma}[\cite{GSP} Proposition 2.9] \label{chain2}
Let $U_1$ be an open subset of U. For any $u\in\mathnormal{W}^{1,p}_{X}(U_1)$ and any $c \in \mathbb{R}$, we have
$$ X_j (u-c)^{+}=H(u-c)X_j u \quad and \quad X_j (u-c)^{-}=-H(c-u)X_j u \quad \text{in}\   \mathscr{D}'(U_1) ,$$
where $H(x)=\chi _{\{x\in \mathbb{R}|x>0\}}(x)$ and $\chi_{E}$ denotes the indicator function of E.
Furthermore,

(1) if $c\geq 0$,we have $(u-c)^{+}, (u-c)^{-} \in\mathnormal{W}^{1,p}_{X}(U_1); $

(2) if $c\geq 0$ and $u\in\mathnormal{W}^{1,p}_{X,0}(U_1)$,then $(u-c)^{+}, (u-c)^{-} \in\mathnormal{W}^{1,p}_{X,0}(U_1).$\\
Here $(u-c)^{+}=\max\{u-c,0\}$, $(u-c)^{-}=-\min \{u-c, 0\}$.
\end{lemma}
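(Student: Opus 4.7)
The plan is to prove Lemma \ref{chain2} by a smooth-approximation argument that reduces the identity to the $C^1$ chain rule already established in Lemma \ref{chain}. Concretely, I would regularize $t\mapsto t^{+}$ by a one-parameter family
$$F_\epsilon(t)=\begin{cases}\sqrt{t^2+\epsilon^2}-\epsilon, & t>0,\\ 0, & t\leq 0,\end{cases}$$
which is $C^1(\mathbb{R})$ with $\|F_\epsilon'\|_{L^\infty}\leq 1$; as $\epsilon\to 0^{+}$ one has $F_\epsilon(t)\to t^{+}$ and $F_\epsilon'(t)\to H(t)$ pointwise (including at $t=0$, where both sides equal $0$). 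The shifted function $\widetilde{F}_\epsilon(t):=F_\epsilon(t-c)$ still lies in $C^1(\mathbb{R})$ with $\widetilde{F}_\epsilon'\in L^\infty(\mathbb{R})$, so Lemma \ref{chain} applies to $\widetilde{F}_\epsilon(u)$ and gives
$$X_j\bigl(F_\epsilon(u-c)\bigr)=F_\epsilon'(u-c)\,X_j u\qquad \text{in } \mathscr{D}'(U_1).$$

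Next I would pass to the limit. Tested against any $\varphi\in C_0^\infty(U_1)$, the above identity reads
$\int F_\epsilon(u-c)\,X_j^{*}\varphi\,dx=\int F_\epsilon'(u-c)\,X_j u\cdot\varphi\,dx.$
On the left, $|F_\epsilon(u-c)|\leq |u-c|$, and $|u-c|\cdot|X_j^{*}\varphi|$ is integrable over the compact support of $\varphi$ because $u\in L^p_{\mathrm{loc}}(U_1)$; on the right, $|F_\epsilon'(u-c)|\leq 1$ and $|X_j u\cdot\varphi|$ is integrable for the same reason. Dominated convergence then yields $X_j(u-c)^{+}=H(u-c)\,X_j u$ in $\mathscr{D}'(U_1)$. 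An identical argument with $G_\epsilon(t):=F_\epsilon(-t)$ produces the companion identity $X_j(u-c)^{-}=-H(c-u)\,X_j u$.

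For the $W^{1,p}_X$ membership in (1), the pointwise estimates $0\leq(u-c)^{+}\leq u^{+}\leq|u|$ (valid because $c\geq 0$) and a comparable bound $(u-c)^{-}\leq c+|u|$ give $L^p$ control of $(u-c)^{\pm}$, while the just-proved distributional identities give $|X_j(u-c)^{\pm}|\leq|X_j u|\in L^p$. For (2) applied to $(u-c)^{+}$, I would observe that when $c\geq 0$ the shifted approximant satisfies $\widetilde{F}_\epsilon(0)=F_\epsilon(-c)=0$, so Lemma \ref{chain}(2) immediately places $F_\epsilon(u-c)$ in $W^{1,p}_{X,0}(U_1)$; passing to the limit $\epsilon\to 0$ in the $W^{1,p}_X$ norm (by dominated convergence applied to $F_\epsilon(u-c)$ and $F_\epsilon'(u-c)X_j u$ separately) and using that $W^{1,p}_{X,0}$ is closed in $W^{1,p}_X$ then yields $(u-c)^{+}\in W^{1,p}_{X,0}(U_1)$.

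The main obstacle I anticipate is the analogous step for $(u-c)^{-}$: setting $\widetilde{G}_\epsilon(t)=G_\epsilon(t-c)=F_\epsilon(c-t)$ gives $\widetilde{G}_\epsilon(0)=F_\epsilon(c)\neq 0$ when $c>0$, so Lemma \ref{chain}(2) is not directly applicable. I would handle this by bypassing the chain rule: take an approximating sequence $u_n\in C_0^\infty(U_1)$ with $u_n\to u$ in $W^{1,p}_X(U_1)$, verify that the map $v\mapsto(v-c)^{-}$ is continuous on $W^{1,p}_X(U_1)$ in the $W^{1,p}_X$ norm using precisely the identity $X_j(v-c)^{-}=-H(c-v)X_j v$ together with the Lipschitz bound $\|(v-c)^{-}-(w-c)^{-}\|_{L^p}\leq\|v-w\|_{L^p}$, and then approximate each $(u_n-c)^{-}$ in $W^{1,p}_{X,0}$ by mollification on a fixed larger open set compactly contained in $U_1$ (so that the portions of $(u_n-c)^{-}$ which equal $c$ outside $\operatorname{supp}(u_n)$ still lie in the ambient open set and can be smoothly truncated). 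Closedness of $W^{1,p}_{X,0}$ under $W^{1,p}_X$ limits then delivers $(u-c)^{-}\in W^{1,p}_{X,0}(U_1)$.
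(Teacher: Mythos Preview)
The paper does not prove this lemma; it is quoted from \cite{GSP} (Proposition 2.9) without argument, so there is nothing in the present paper to compare your proof against.

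On its own merits, your approach to the distributional identities and to part (1) is the standard one and is correct: regularize $t\mapsto t^{+}$ by a $C^{1}$ family with uniformly bounded derivative, apply Lemma \ref{chain}, and pass to the limit by dominated convergence. Likewise, for $(u-c)^{+}$ in part (2) the observation $\widetilde F_\epsilon(0)=F_\epsilon(-c)=0$ (valid because $c\ge 0$) lets you invoke Lemma \ref{chain}(2) directly, and the $W^{1,p}_{X}$--limit lands in the closed subspace $W^{1,p}_{X,0}$.

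Your concern about $(u-c)^{-}$ in part (2) is well placed, but the workaround you sketch cannot succeed, because the assertion itself fails for $c>0$. Take $u\equiv 0\in W^{1,p}_{X,0}(U_1)$; then $(u-c)^{-}\equiv c$ on $U_1$, and by the Sobolev inequality of Lemma \ref{sobolev} any $w\in W^{1,p}_{X,0}$ of a bounded domain with $Xw=0$ must vanish, so a nonzero constant is never in $W^{1,p}_{X,0}$. More concretely, for $u_n\in C_0^\infty(U_1)$ the function $(u_n-c)^{-}$ equals $c$ on all of $U_1\setminus\operatorname{supp}(u_n)$, in particular near $\partial U_1$; no mollification or cutoff confined to a compact subset of $U_1$ can turn this into an element of $W^{1,p}_{X,0}(U_1)$, and your ``smooth truncation'' would change the function by a fixed amount on a set of positive measure, destroying the claimed $W^{1,p}_X$--convergence. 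The gap is therefore not in your method but in the statement as transcribed here; the natural companion to $(u-c)^{+}$ for $c\ge 0$ is $(u+c)^{-}\in W^{1,p}_{X,0}(U_1)$ (equivalently, $(u-c)^{-}$ for $c\le 0$).
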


\begin{lemma} \label{chain3}
Let $U_1$ be an open subset of U, and  F be a piecewise smooth function on $\mathbb{R}$ with $F'\in L^{\infty}(\mathbb{R})$. Then  if $u\in\mathnormal{W}^{1,p}_{X}(U_1)$,  letting $\mathscr {L}$ denote the set of corner points of $F$,  we have
$$X_j (F(u))=\left\{
\begin{aligned}
&F'(u)X_j u, \qquad &if\  u\notin \mathscr {L},\\
&0,\qquad &if\  u\in \mathscr {L}.\\
\end{aligned}
\right. $$
Furthermore, if $F\circ u\in L^{p}(U_1)$,  we have $F\circ u\in\mathnormal{W}^{1,p}_{X}(U_1).$

\end{lemma}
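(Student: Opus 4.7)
The plan is to reduce the piecewise smooth case to the $C^1$ case already handled by Lemma \ref{chain}, via a mollification of $F$ near its corners, combined with the key observation that $X_j u$ vanishes almost everywhere on each level set of $u$.

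\textbf{Step 1 (vanishing on level sets).} First I would establish the auxiliary fact that, for every $c\in\mathbb{R}$ and every $u\in W^{1,p}_X(U_1)$, one has $X_j u = 0$ almost everywhere on $\{u=c\}$. This is extracted from Lemma \ref{chain2} as follows: writing $u-c=(u-c)^+-(u-c)^-$ and taking $X_j$ of both sides (both summands lie in $W^{1,p}_X(U_1)$ by that lemma applied with constant shifted appropriately), we get
\begin{equation*}
X_j u = H(u-c)\,X_j u + H(c-u)\,X_j u = \chi_{\{u\neq c\}}\,X_j u,
\end{equation*}
so $\chi_{\{u=c\}}X_j u=0$ a.e. Since the set $\mathscr L$ of corner points of a piecewise smooth $F$ is at most countable, $X_j u = 0$ a.e. on $\{u\in\mathscr L\}$ by countable subadditivity.

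\textbf{Step 2 (mollify $F$ near its corners).} Enumerate $\mathscr L=\{c_k\}$ and for each $\varepsilon>0$ choose $F_\varepsilon\in C^1(\mathbb R)$ satisfying: (i) $F_\varepsilon=F$ off the union of intervals $(c_k-\varepsilon_k, c_k+\varepsilon_k)$ with $\varepsilon_k\downarrow 0$ fast enough; (ii) $\|F_\varepsilon'\|_{L^\infty}\le \|F'\|_{L^\infty}+1$; (iii) $F_\varepsilon\to F$ uniformly on compact sets; (iv) $F_\varepsilon'(t)\to F'(t)$ for every $t\notin \mathscr L$. A standard mollification of $F$ on neighborhoods of each corner accomplishes this.

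\textbf{Step 3 (pass to the limit).} For each $\varepsilon$, Lemma \ref{chain} (applied to $F_\varepsilon - F_\varepsilon(0)$ if needed to handle the $F(0)\neq 0$ issue, which is irrelevant because we are only computing $X_j(F_\varepsilon(u))$) gives $X_j(F_\varepsilon(u)) = F_\varepsilon'(u)\, X_j u$ in $\mathscr D'(U_1)$. Since $F_\varepsilon(u)\to F(u)$ locally in $L^1$ (by (ii)--(iii) and local integrability of $u$), the left-hand side converges to $X_j(F(u))$ in $\mathscr D'(U_1)$. For the right-hand side, decompose
\begin{equation*}
F_\varepsilon'(u)\,X_j u = F_\varepsilon'(u)\,X_j u\,\chi_{\{u\notin\mathscr L\}} + F_\varepsilon'(u)\,X_j u\,\chi_{\{u\in\mathscr L\}}.
\end{equation*}
By Step 1 the second term is identically zero a.e.; by (ii)--(iv) and dominated convergence (dominated by $(\|F'\|_\infty+1)|X_j u|\in L^p_{loc}$), the first term converges in $L^p_{loc}$ to $F'(u)X_j u\,\chi_{\{u\notin\mathscr L\}}$. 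This identifies $X_j(F(u))$ with the piecewise formula in the statement.

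\textbf{Step 4 (Sobolev membership).} The computed expression for $X_j(F(u))$ is pointwise bounded by $\|F'\|_{L^\infty}|X_j u|$, hence lies in $L^p(U_1)$; combined with the hypothesis $F\circ u\in L^p(U_1)$ this gives $F\circ u \in W^{1,p}_X(U_1)$. The only delicate step is Step 1: without it, one cannot conclude that the limit of $F_\varepsilon'(u)X_j u$ on the bad set $\{u\in\mathscr L\}$ is zero rather than an arbitrary function bounded by $\|F'\|_\infty|X_j u|$, and this is where the $W^{1,p}_X$-level-set lemma (derived from Lemma \ref{chain2}) is indispensable.
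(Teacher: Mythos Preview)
Your argument is correct, but it proceeds quite differently from the paper's. The paper never mollifies $F$; instead, for a single corner $c$ it picks $C^1$ extensions $F_1,F_2$ with $F_1=F$ on $[c,\infty)$ and $F_2=F$ on $(-\infty,c]$, writes
\[
F(u)=\begin{cases}F_1\bigl((u-c)^++c\bigr),& u\ge c,\\ F_2\bigl(-(u-c)^-+c\bigr),& u\le c,\end{cases}
\]
and then applies Lemma~\ref{chain} to $F_1,F_2$ together with Lemma~\ref{chain2} to the pieces $(u-c)^\pm$, obtaining the stated formula directly; it then iterates over the remaining corners. Your route instead isolates the level-set fact $X_ju=0$ a.e.\ on $\{u=c\}$ (a clean corollary of Lemma~\ref{chain2}), builds a global $C^1$ approximation $F_\varepsilon$, and passes to the limit by dominated convergence. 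The paper's approach is shorter and avoids any limiting argument, using the exact piecewise structure; yours is more flexible (it would adapt to Lipschitz $F$ or infinitely many corners with minor changes) and makes explicit the useful intermediate statement that $X_ju$ vanishes on level sets, which is implicit but not stated in the paper's proof.
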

\begin{proof}
Let us assume  $c$ is a corner point. Let $F_1, F_2\in C^{1}(\mathbb{R})$ satisfying $F_1', F_2'\in L^{\infty}(\mathbb{R})$ and
$$F_1=F, \ \ \text{if} \ u \geq c;\ \ \ F_2=F, \ \ \text{if} \ u \leq c.$$

Through Lemma \ref{chain}, we have
$$X_j(F_1 (u))=F_1'(u) X_ju\ \  \text{and}\ \  X_j(F_2(u))=F_2'(u) X_ju. $$
Since
$$
F(u)=
 \begin{cases}
F_1((u-c)^{+}+c)  \ \ \ & u\geq c \\
F_2(-(u-c)^{-}+c)  \ &  u\leq c,
\end{cases}
$$
combing  Lemma \ref{chain2}, we have
$$\begin{aligned}
X_j (F(u))=&
 \begin{cases}
F_1'((u-c)^{+}+c) X_j(u-c)^{+} \ \ \ & u\geq c \\
F_2'(-(u-c)^{-}+c) X_j(-(u-c)^{-})  \ &  u\leq c,
\end{cases}
\\=& \begin{cases}
F'(u)X_j u\ \ \ & u\neq c \\
0  \ &  u= c.
\end{cases}
\end{aligned}$$

 For another corner   point $c_1$, we repeat the  analysis  for $F_1((u-c)^{+}+c)$ or $F_2(-(u-c)^{-}+c)$.
    Keep  on the above steps  and  we conclude.
\end{proof}
\begin{lemma}[\cite{GSP}, Theorem 1.1]\label{sobolev}
Let $X=(X_1,X_2,...X_m)$ satisfy condition \textup{(H)}. Then, for any bounded open subset $\Omega \subset\subset U$ and any positive number $p\geq  1$, there exist a positive constant $C>0$ such that\\
(1)If $kp<\tilde{v}$ and $\frac{1}{q}=\frac{1}{p}-\frac{k}{\tilde{v}}$, we have
$$||u||_{L^q(\Omega)}\leq C\sum\limits_{|J|=k}||X^Ju||_{L^p(\Omega)}  \qquad  \forall u\in \mathnormal{W}^{k,p}_{X,0}(\Omega); $$
(2)If $kp=\tilde{v}$ and $1\leq q<\infty$, we also have
$$||u||_{L^q(\Omega)}\leq C\sum\limits_{|J|=k}||X^Ju||_{L^p(\Omega)}  \qquad  \forall u\in \mathnormal{W}^{k,p}_{X,0}(\Omega). $$
Here, $\tilde{v}$ is the generalized M\'etivier index of $\Omega$.
\end{lemma}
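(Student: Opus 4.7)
The plan is to reduce the general $k$-th order case to the first-order Sobolev inequality by iteration, and to prove the first-order case via a pointwise potential representation combined with a Hardy-Littlewood-Sobolev estimate on the metric measure space $(\Omega, d, dx)$ governed by the Nagel-Stein-Wainger polynomial $\Lambda(x,r)$.

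First I would handle $k=1$. For $u \in \mathnormal{W}^{1,p}_{X,0}(\Omega)$ I would establish a pointwise potential bound of the form
\[
|u(x)| \leq C \int_\Omega \frac{d(x,y)}{|B(x,d(x,y))|}\,|Xu(y)|\,dy.
\]
Such a bound follows from representing $u$ via the Green operator of a suitable localized sub-Laplacian, which can be constructed through a Rothschild-Stein lifting to a free nilpotent group followed by saturation (since the assumption $\textup{(H)}$ mentioned in the introduction is not postulated here, one cannot invoke a precomputed fundamental solution). The Nagel-Stein-Wainger volume estimate $|B(x,r)| \asymp \Lambda(x,r)$ then recasts this as a fractional integral whose kernel decays near the diagonal like $r^{1-v(x)}$, where $v(x)$ is the pointwise homogeneous dimension at $x$.

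The crucial step producing the sharp exponent $\tilde{v}$ rather than the (generally larger) local homogeneous dimension $Q$ is the observation that only the pointwise homogeneous dimension at the base point governs the local kernel behavior. I would localize via a finite cover of $\overline{\Omega}$ by small subunit balls on each of which $v$ is essentially constant, apply the Hardy-Littlewood-Sobolev lemma in the doubling metric measure space with effective dimension $\tilde{v} = \max_{\overline{\Omega}} v(x)$ on each piece, and then sum the contributions through a partition of unity. This yields $\|u\|_{L^q(\Omega)} \leq C \|Xu\|_{L^p(\Omega)}$ for $q = p\tilde{v}/(\tilde{v}-p)$ when $p < \tilde{v}$, and for arbitrary finite $q$ when $p = \tilde{v}$ via the endpoint Trudinger form of the same estimate.

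Finally, I would iterate: applying the first-order inequality to each $X^{J'} u$ with $|J'|=k-1$ yields $\|X^{J'} u\|_{L^{q_1}} \leq C \sum_{|J|=k} \|X^J u\|_{L^p}$ with $q_1^{-1} = p^{-1} - \tilde{v}^{-1}$, and repeating $k$ times produces the stated exponent $q^{-1} = p^{-1} - k\tilde{v}^{-1}$; the borderline case $kp = \tilde{v}$ is obtained by applying the endpoint inequality at the final iteration. The principal obstacle is the first step: producing the pointwise representation with the sharp local exponent $v(x)$ rather than $Q$, which requires either a careful Rothschild-Stein lifting argument combined with descent, or a direct analysis of $\Lambda(x,r)$ demonstrating that only the pointwise homogeneous dimension at the base point governs the small-scale kernel behavior. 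Controlling the transition across points where $v(\cdot)$ jumps (which is allowed, since $\Omega$ is not assumed equiregular) is the technically most delicate portion of this step.
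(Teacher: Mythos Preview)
The paper does not prove this lemma at all: it is quoted verbatim as Theorem~1.1 of \cite{GSP} (Chen, Chen, and Li) and is used purely as a black-box tool in the preliminaries section. There is therefore no ``paper's own proof'' to compare against.

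That said, a brief remark on your outline. The broad architecture (potential representation plus fractional-integral bound, then iteration in $k$) is a standard and reasonable route to Sobolev embeddings in this setting. However, the part you yourself flag as ``the principal obstacle'' is in fact the entire content of the result: obtaining the \emph{sharp} exponent $\tilde{v}=\max_{\overline{\Omega}}v(x)$ rather than the local homogeneous dimension $Q$ is precisely the novelty of \cite{GSP}, and your sketch does not really explain how to achieve it. In particular, the sentence ``localize via a finite cover of $\overline{\Omega}$ by small subunit balls on each of which $v$ is essentially constant'' is problematic when $\Omega$ is not equiregular, since $v(\cdot)$ is only upper semicontinuous and can jump on arbitrarily small neighborhoods of singular points; you cannot in general find such a cover. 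The Nagel--Stein--Wainger kernel estimate $|B(x,r)|\asymp\Lambda(x,r)$ naturally produces the exponent $Q$, not $\tilde{v}$, because $\Lambda$ contains contributions from all $n$-tuples $I$ with $\lambda_I(x)\neq 0$, including those with $d(I)>v(x)$. Extracting the sharper exponent requires a more refined analysis than what you have indicated.
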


\begin{lemma}\label{caps} For any bounded open  set $\Omega \subset\subset U$,
if $\Sigma \subset U $ be a compact set with $cap_s(\Sigma)=0$, where $1\leq s\leq \tilde{v}$, then there exists a sequence of functions $\bar{\eta}_v \in \mathnormal{C}^{\infty}(U)$ with the properties:\\
(i) $\bar{\eta}_v$ vanish in a neighborhood of  $\Sigma,$ \\
(ii) $0 \leq \bar{\eta}_v \leq 1,$\\
(iii) $\bar{\eta}_v \rightarrow 1 \; a.e. \text{ in} \; U\; \text{ as} \; v \rightarrow \infty ,$\\
(iv) $\int_{U}|X \bar{\eta}_v|^s dx \rightarrow 0 \;  \text{ as}  \; v \rightarrow \infty.$
\end{lemma}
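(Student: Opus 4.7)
The plan is to produce each $\bar\eta_v$ as $1-\chi_v$, where $\chi_v$ is a smooth function that equals $1$ in an open neighborhood of $\Sigma$, has small support, and whose $X$-gradient has vanishing $L^s$ norm. The starting raw material is the definition of $cap_s(\Sigma)=0$: there exist a bounded open set $U_1\subset U$ and a sequence $\psi_v\in C_0^\infty(U_1)$ with $\psi_v\geq 1$ on $\Sigma$ and
\[
\int_U|X\psi_v|^s\,dx<\frac{1}{v}.
\]
By intersecting $U_1$ with an open neighborhood of the compact set $\Sigma$ if necessary, I may assume $\overline{U_1}\subset U$; since $\psi_v\in C_0^\infty(U_1)$ the integral above is already concentrated in $U_1$.

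Next, fix once and for all a cutoff $\Phi\in C^\infty(\mathbb R;[0,1])$ with $\Phi(t)=0$ for $t\leq 1/2$ and $\Phi(t)=1$ for $t\geq 1$, and set
\[
\chi_v(x):=\Phi\bigl(2\psi_v(x)\bigr),\qquad \bar\eta_v(x):=1-\chi_v(x).
\]
Since $\psi_v$ is continuous with $2\psi_v\geq 2$ on $\Sigma$, the open set $\{2\psi_v>1\}$ is a neighborhood of $\Sigma$ on which $\chi_v=1$ and hence $\bar\eta_v\equiv 0$, which gives (i). Because $\Phi(0)=0$, the support of $\chi_v$ lies inside the support of $\psi_v$, so $\chi_v\in C_0^\infty(U_1)$ and thus $\bar\eta_v\in C^\infty(U)$ with $0\leq\bar\eta_v\leq 1$; this is (ii). The chain rule for smooth vector fields yields $|X\chi_v|\leq 2\|\Phi'\|_\infty\,|X\psi_v|$, hence
\[
\int_U|X\bar\eta_v|^s\,dx=\int_U|X\chi_v|^s\,dx\leq (2\|\Phi'\|_\infty)^s\int_U|X\psi_v|^s\,dx\to 0,
\]
which is (iv).

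The only remaining property is the a.e.\ convergence (iii), which does not follow directly from the capacity information and is the genuine step of the argument. I would apply Lemma \ref{sobolev} on the bounded set $U_1$ to each $\psi_v\in W^{1,s}_{X,0}(U_1)$: when $s<\tilde v$ this gives $\|\psi_v\|_{L^q(U_1)}\leq C\|X\psi_v\|_{L^s(U_1)}\to 0$ for $q=s\tilde v/(\tilde v-s)$, while if $s=\tilde v$ the same inequality holds for any finite $q$. Thus $\psi_v\to 0$ in $L^q(U_1)$, and after extracting a subsequence (which I relabel) I obtain $\psi_v\to 0$ a.e.\ in $U_1$. At every such point $x$ we have $2\psi_v(x)<1/2$ for all large $v$, hence $\chi_v(x)=0$ and $\bar\eta_v(x)=1$ eventually. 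Outside $U_1$, $\bar\eta_v\equiv 1$ by construction, so $\bar\eta_v\to 1$ a.e.\ in $U$.

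The main technical point is property (iii): capacity vanishing controls only the $X$-gradient, so one must bring in the Sobolev embedding of Lemma \ref{sobolev} and pass to a subsequence in order to upgrade $L^q$-decay of $\psi_v$ to pointwise a.e.\ decay, whence pointwise a.e.\ vanishing of $\chi_v$. The remaining items (i), (ii), (iv) reduce to a standard post-composition of the capacity extremizing sequence with a fixed smooth cutoff, so the choice of $\Phi$ and the factor $2$ in $\Phi(2\psi_v)$ are the only tricks needed to turn ``$\geq 1$ on $\Sigma$'' into ``$\equiv 1$ on a neighborhood of $\Sigma$''.
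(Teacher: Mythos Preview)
Your argument follows the same strategy as the paper's: truncate a capacity-minimizing sequence, then invoke the Sobolev embedding (Lemma \ref{sobolev}) to force the truncated functions to vanish a.e., and finally pass to $1-\chi_v$. Your implementation is in fact cleaner than the paper's: by post-composing $\psi_v$ with a fixed smooth cutoff $\Phi$, you obtain smooth $\chi_v$ directly, whereas the paper first performs a Lipschitz truncation of $2\psi_v$ and then must mollify, appealing to a nontrivial fact from \cite{NGDMN} about convergence of $X$-gradients under mollification for variable-coefficient vector fields.

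One correction, however: you should \emph{enlarge} $U_1$ rather than intersect it with a neighborhood of $\Sigma$. Shrinking $U_1$ shrinks the admissible class in Definition \ref{definition1} and can only increase $cap_s(\Sigma,U_1)$, so you may lose the zero-capacity condition and the sequence $\psi_v$. The paper instead enlarges $U_1$ so that $\Omega\subset U_1$; this is harmless because $cap_s(\Sigma,\cdot)$ is monotone decreasing in the second argument, and it guarantees $\tilde v(U_1)\geq \tilde v$, which is what you actually need in order to apply Lemma \ref{sobolev} on $U_1$ under the standing hypothesis $s\leq\tilde v$.
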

\begin{proof}
Since $cap_s(\Sigma)=0$, there is a bounded open $U_1$ such that $cap_{s}(\Sigma, U_1 )=0$. Form the Definition \ref{definition1}, we can assume $\Omega\subset U_1$. Then there is a sequence of functions $\psi_{v}\in \mathnormal{C}^{\infty}_{0}(U_1 )$ such that  $$ \int_{U} |X{\psi}_v|^{s}dx\leq \frac{1}{v},$$
and
  $2{\psi}_v\geq 1$  in some neighborhood of $\Sigma$, so we define
$${\bar{\psi}}_v=\left\{
\begin{aligned}
&0 & if \ {\psi}_v <0 ,\\
&2{\psi}_v \qquad &if \  0\leq2{\psi}_v\leq 1,\\
&1 &if \ 2{\psi}_v >1.
\end{aligned}
\right.$$

  It's obvious that  ${\bar{\psi}}_v$ is  Lipschitz continuous and
  $$ \int_{U} |X\bar{{\psi}}_v|^{s}dx\leq 2^s\int_{U} |X{\psi}_v|^{s}dx\leq \frac{ 2^s}{v}.$$

  By Lemma \ref{sobolev}, we obtain
  $$  \int_{U_1} |{\bar{\psi}}_v|^{s*}dx\leq   \int_{U_1}|X {\bar{\psi}}_v|^{s}dx \leq \frac{ C\cdot2^s}{v},$$
  with
   $$s^{\ast}=\left\{\begin{aligned}&\frac{s\tilde{v}(U_1)}{\tilde{v}(U_1)-s},\quad &if \ s<\tilde{v}(U_1) ,\\
&2s,\quad &if \ s=\tilde{v}(U_1),
\end{aligned}
\right.$$
 which implies ${\bar{\psi}}_v=0$ a.e. in $U_1$ when $v\to \infty $. Then $\bar{{\psi}}_v=0$ a.e. in $U$ when $v\to \infty $.
 Let $\bar{\psi}_{v,j}$ is the mollification of   $\bar{\psi}_v$,   then  $\bar{\psi}_{v,j}\in C^{\infty}(U)$, $0 \leq\bar{\psi}_{v,j}\leq 1$,  and $\bar{\psi}_{v,j} \rightarrow 0$  a.e. in  $U$. Furthermore,  it is shown in [\cite{NGDMN}, p. 1136] that
 $$\int_{U}|X \bar{{\psi}}_{v,j}|^s dx \rightarrow \int_{U}|X \bar{{\psi}}_{v}|^s dx \rightarrow 0. $$

  Set $\tilde{{\psi}}_{v,j}=1-\bar{\psi}_{v,j}$. Since $\tilde{v}\leq \tilde{v}(U_1)$, it's easy see that
$\tilde{{\psi}}_{v,j}$ vanish in a neighborhood of  $\Sigma,$
$0 \leq \tilde{{\psi}}_{v,j}\leq 1,$ and as  $v \rightarrow \infty$,
 $\tilde{{\psi}}_{v,j}\rightarrow 1$  a.e. in  $U$ and
 $\int_{U}|X \tilde{{\psi}}_{v,j}|^s dx \rightarrow 0 $ for any $s\in [1, \tilde{v}]$.

\end{proof}

\begin{lemma}[\cite{MM} Lemma 1]\label{inez}
Let $\alpha$ be a positive exponent, and let $\alpha_i$, $\beta_i (i=1,...N)$ be real numbers with the properties $0\leq \alpha_i< \infty $ and $0\leq \beta_i< \alpha .$ If z is a nonnegative real number satisfying the inequality
$$z^\alpha \leq \sum\limits_{i=1}^N \alpha_i z^{\beta_i}, $$
then
$$z \leq C \sum\limits_{i=1}^N { \alpha_i}^{(\alpha-\beta_i)^{-1}} .$$
where C depends only on N, $\alpha$, and $\beta_i$.
\end{lemma}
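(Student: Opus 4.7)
The plan is to reduce the many-term inequality to a single-term one via a pigeonhole step, then invert to solve for $z$.

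First I would dispose of the trivial case $z=0$ (where the conclusion is obvious) and henceforth assume $z>0$. From the hypothesis
\[
z^{\alpha} \;\leq\; \sum_{i=1}^{N} \alpha_i z^{\beta_i},
\]
there must exist some index $i_0 \in \{1,\dots,N\}$ (depending on $z$) such that $\alpha_{i_0} z^{\beta_{i_0}} \geq \frac{1}{N} z^{\alpha}$, for otherwise the sum of $N$ strictly smaller quantities would be less than $z^{\alpha}$, contradicting the assumption. This is the key pigeonhole observation.

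Next I would rearrange to isolate $z$. Dividing by $z^{\beta_{i_0}}$ (legitimate since $z>0$) gives
\[
z^{\alpha-\beta_{i_0}} \;\leq\; N\,\alpha_{i_0},
\]
and the assumption $\beta_{i_0} < \alpha$ lets me take the positive $(\alpha-\beta_{i_0})$-th root:
\[
z \;\leq\; \bigl(N\,\alpha_{i_0}\bigr)^{1/(\alpha-\beta_{i_0})} \;\leq\; N^{1/(\alpha-\beta_{i_0})}\,\alpha_{i_0}^{1/(\alpha-\beta_{i_0})}.
\]
Since $i_0$ belongs to the finite set $\{1,\ldots,N\}$, I can bound this by the maximum over $i$, then by the sum:
\[
z \;\leq\; \max_{1\leq i\leq N} N^{1/(\alpha-\beta_i)}\,\alpha_i^{1/(\alpha-\beta_i)}
\;\leq\; C\sum_{i=1}^{N}\alpha_i^{(\alpha-\beta_i)^{-1}},
\]
where $C := \max_{i} N^{1/(\alpha-\beta_i)}$ depends only on $N$, $\alpha$, and the $\beta_i$, as required.

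There is essentially no serious obstacle in this argument; the only subtle point is the uniformity of the constant $C$ in $z$. This is handled automatically because the pigeonhole index $i_0$ ranges over the finite set $\{1,\ldots,N\}$, and so taking the maximum over $i$ before summing yields a constant independent of $z$. (If one preferred, the case $\alpha_i=0$ is harmless, since such indices contribute nothing to the sum on either side, and the case $\beta_i=0$ is also fine because $\alpha>0$ makes $\alpha-\beta_i>0$.)
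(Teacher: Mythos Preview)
Your argument is correct. The paper does not supply its own proof of this lemma; it simply quotes the result from \cite{MM} and uses it as a black box, so there is nothing to compare against beyond noting that your pigeonhole-then-invert approach is the standard elementary proof and yields exactly the stated dependence of $C$ on $N$, $\alpha$, and the $\beta_i$.
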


%

\section{ Removable singularity }
In this section, we mainly studies    removable singularities for weak solutions of \eqref{equa}  by  proving the Theorem  \ref{Thmremovsing} and  Theorem  \ref{Thmholderctn}.

\begin{proof}[\textbf{proof of Theorem \ref{Thmremovsing}}]
It suffices to prove that for any ball $B_{2R}\subset\subset \Omega $, the function $ u\in \mathnormal{W}^{1,p}_{X}(B_{R})$ and $u$ is a weak solution of \eqref{equa} in $B_{R}$. Next, we will discuss in two steps.

Step1:  we will show that $||u||_{L^p(B_R-\Sigma)}$ and $||X u||_{L^p(B_R-\Sigma)}$ are finite.\\
Let $$k=1+\left(||e||_{B_{2R}}+||f||_{B_{2R}}\right)^{\frac{1}{p-1}}+||g||_{B_{2R}}^{\frac{1}{p}},$$ and set
$$\bar{u}=|u|+k,\quad \bar{b}(x)=b(x)+k^{1-p}e,\quad \bar{d}(x)=d(x)+k^{1-p}f(x)+k^{-p}g(x), $$ where $||e||,||f||$ and $||g||$ are as defined in the corresponding norm of \eqref{func}.
We can see that  $||\bar{b}(x)||_{B_{2R}}\leq ||b(x)||_{B_{2R}}+1 ,$ $||\bar{d}(x)||_{B_{2R}}\leq ||d(x)||_{B_{2R}}+2,$
 $|u|\leq \bar{u},$ and
\begin{equation}\label{thmesti1}
\left\{
\begin{aligned}
&b(x)|u|^{p-1}+e(x) \leq  \bar {b}(x)\bar {u}^{p-1}, \\
&d(x)|u|^p+g(x) \leq  \bar {d}(x)\bar {u}^p , \\
&d(x)|u|^{p-1}+f(x) \leq  \bar {d}(x)\bar {u}^{p-1}.
\end{aligned}
\right.
\end{equation}
Moreover, for any $l>k$, we set
\begin{equation}\label{eq:11}  \bar{u}^{(l)}_k(x) =\begin{cases}
l  \ \ \ & if \ \bar{u}\geq l,\\
\bar{u}   \ &if \ k<\bar{u}<l,\\
k \ &if \ \bar{u}\leq k,
\end{cases}
 \end{equation}
 and
\begin{equation}\label{eq:12}
\bar{u}_k(x)=\bar{u}^{(\infty)}_k(x).\end{equation}
Besides, we set  \begin{equation}\label{eq:133}\displaystyle q_0=\frac{p-\theta}{p}(1+\delta)\end{equation}and assume without loss of generality that  $q_0\leq 1.$
 For any number $q$ satisfying $0<q_0\leq q\leq 1$,   we set \begin{equation}\label{eq:13}t_0=p(q_0-1),\ t=p(q-1).\end{equation} Then we have  \begin{equation}\label{eq:14}-p\leq t_0\leq t\leq 0, \  t_0+\theta >0.\end{equation}

Let
\begin{equation}\label{thmtestf}
\tilde{\phi}=(\eta\bar{\eta})^p u( \bar{u}_k)^{t_0}(\bar{u}^{(l)}_k)^{t-t_0}=:(\eta\bar{\eta})^p u \psi,
\end{equation}
where $\eta\in \mathnormal{C}^{\infty}_0(B_{2R}),$ $\bar{\eta}\in \mathnormal{C}^{\infty} ({\mathbb {R}}^n)$ satisfying $0\leq \eta,\bar{\eta}\leq 1$ and $\bar{\eta}$ vanishes in a neighborhood of $\Sigma$.
 Since $\tilde{\phi}(u)$ is  piecewise
smooth  about $u$ and $\tilde{\phi}'\in L^{\infty}(\mathbb{R})$, by Lemma \ref{chain3} we get that $\tilde{\phi} \in \mathnormal{W}^{1,p}_{X,0}(B_{2R}-\Sigma)$ and  the following equation
\begin{equation}\label{eq:1}
X\tilde{\phi}=p(\eta\bar{\eta})^{p-1}X(\eta\bar{\eta})u \psi +(\eta\bar{\eta})^p X u \psi
+(\eta\bar{\eta})^p |u|X u \psi (\bar{u}_k)^{-1}\{t \chi _{k,l}+t_0 \chi_l\}.\end{equation}
where $\chi_l $  and  $\chi _{k,l}$ denote the characteristic functions of the sets $\{x\in B_{2R}:l<\bar{u}\}$ and $\{x\in B_{2R}:k<\bar{u}<l\}$, respectively.
For any $i$ we define
\begin{equation}\label{eq:111}
{ \tilde\phi}^{(i)}=\left\{
\begin{aligned}
& \tilde\phi & if \ \tilde\phi \leq i ,\\
& \frac{ \tilde\phi}{| \tilde\phi|}i   &if \  \tilde\phi \geq i,
\end{aligned}
\right.\end{equation}
and we have ${\tilde\phi}^{(i)} \in \mathnormal{W}^{1,p}_{X,0}\cap L^{\infty}(B_{2R}-\Sigma).$

Based on the fact that u is a weak solution of \eqref{equa} in $\Omega-\Sigma$ and $B(x,u,Xu)\in L^{1}_{loc}(\Omega-\Sigma)$, letting $\tilde{\phi}^{i}$ be test function, we have
$$
 \int_{B_{2R}-\Sigma}A(x,u,X u)\cdot X\tilde{\phi}^{i} dx= \int_{B_{2R}-\Sigma}B(x,u,Xu)\tilde{\phi}^{i}dx\leq  \int_{B_{2R}-\Sigma}|B(x,u,Xu)\tilde{\phi}|dx.
$$
Let $i\to \infty$, we get
$$\int_{B_{2R}-\Sigma}A(x,u,X u)\cdot X\tilde{\phi}dx \leq  \int_{B_{2R}-\Sigma}|B(x,u,Xu)\tilde{\phi}|dx. $$
Combining  \eqref{thmcond1} and  \eqref{eq:1}  in sequence,  we have
$$\begin{aligned}&\int_{B_{2R}-\Sigma}  (\eta\bar{\eta})^p \psi Xu A(x,u,Xu)\left(1+|u|(\bar{u}_k)^{-1}\{t \chi _{k,l}+t_0 \chi_l\}\right)dx \\
\leq &\int_{B_{2R}-\Sigma} (\eta\bar{\eta})^p\psi \left((1-\theta)|Xu|^p+|u|(c(x)|Xu|^{p-1}+{d}(x)|{u}|^{p-1}+f(x))\right)\\
    &+p(\eta\bar{\eta})^{p-1}X(\eta\bar{\eta})|u| \psi|A(x,u,Xu)|dx.
    \end{aligned}$$

    By \eqref{cond}, we have
 $$\begin{aligned}&\int_{B_{2R}-\Sigma}  (\eta\bar{\eta})^p \psi \left(1+|u|(\bar{u}_k)^{-1}\{t \chi _{k,l}+t_0 \chi_l\}\right)\left(|{X}u|^p-d(x)|u|^p-g(x)\right)dx \\
\leq &\int_{B_{2R}-\Sigma} (\eta\bar{\eta})^p\psi \left((1-\theta)|Xu|^p+|u|(c(x)|Xu|^{p-1}+{d}(x)|{u}|^{p-1}+f(x))\right)\\
    &+p(\eta\bar{\eta})^{p-1}X(\eta\bar{\eta})|u| \psi \left(a|Xu|^{p-1}+{b}(x)|{u}|^{p-1}+e(x)\right)dx.
    \end{aligned}$$

And by \eqref{thmesti1} and  \eqref{eq:12}, we get

$$\begin{aligned}&\int_{B_{2R}-\Sigma}  (\eta\bar{\eta})^p \psi \left(\theta+|u|(\bar{u}_k)^{-1}\{t \chi _{k,l}+t_0 \chi_l\}\right)|{X}u|^pdx \\
\leq &\int_{B_{2R}-\Sigma} (\eta\bar{\eta})^p\psi \bigg(|u|(c(x)|Xu|^{p-1}+\bar{d}(x)\bar{u}^{p-1})+ \left(1+|u|(\bar{u}_k)^{-1}\{t \chi _{k,l}+t_0 \chi_l\}\right)\bar{d}(x)\bar{u}^p\bigg)\\
    &+p(\eta\bar{\eta})^{p-1}X(\eta\bar{\eta})|u| \psi \left(a|Xu|^{p-1}+\bar{b}(x)\bar{u}^{p-1}\right)dx.
    \end{aligned}$$

 By \eqref{eq:11}, \eqref{eq:13} and  \eqref{eq:14}, we  note
 $t_0\leq t\leq 0 $, and $ t_0+\theta> 0$.  Combing   \eqref{cond} and \eqref{thmesti1} we obtain that
\begin{equation}\label{ine1}
\begin{aligned}
    &\int_{B_{2R}-\Sigma} (t_0+\theta)(\eta\bar{\eta})^p \psi |{X}u|^p dx
  \leq   \int_{B_{2R}-\Sigma} |c(x)|(\eta\bar{\eta})^p\psi\bar{u}|{X}u|^{p-1}+2  \bar{d}(x)(\eta\bar{\eta})^p\psi \bar{u}^p \\
   +&pa(\eta\bar{\eta})^{p-1}{X}(\eta\bar{\eta})\psi\bar{u}|{X}u|^{p-1}dx+p\bar{b}(x)(\eta\bar{\eta})^{p-1} {X}(\eta\bar{\eta})\psi \bar{u}^p. \end{aligned}
\end{equation}

Defining the functions
\begin{equation}\label{simplify}
\left\{\begin{aligned}&v=\bar{u}_k\psi^{\frac{1}{p}}=(\bar{u}_k)^{\frac{t_0}{p}+1}(\bar{u}^{(l)}_k)^{\frac{t-t_0}{p}}=(\bar{u}_k)^{q_0}(\bar{u}^{(l)}_k)^{q-q_0},\\
&w=|{X}u|\psi^{\frac{1}{p}},
\end{aligned}
\right.
\end{equation}
 then the inequality \eqref{ine1} can be simplified by
\begin{equation}\label{ine2}
\begin{aligned}
   \int_{B_{2R}-\Sigma} (t_0+\theta)|\eta\bar{\eta}w|^p dx
    \leq  & \int_{B_{2R}-\Sigma}2 |\bar{d}(x)||\eta\bar{\eta}v|^p +p|\bar{b}(x)||\eta\bar{\eta}v|^{p-1} |X(\eta\bar{\eta})v|\\
    &+|c(x)||\eta\bar{\eta}w|^{p-1}|\eta\bar{\eta}v|+pa|\eta\bar{\eta}w|^{p-1} |X(\eta\bar{\eta})v| dx.
\end{aligned}
\end{equation}
We apply Young's inequality to the last two terms of \eqref{ine2}\\
\begin{equation}\label{eq:15}\begin{aligned}
&\int_{B_{2R}-\Sigma} |c(x)||\eta\bar{\eta}w|^{p-1}|\eta\bar{\eta}v| dx\leq\int_{B_{2R}-\Sigma} \varepsilon|\eta\bar{\eta}w|^p dx + \int_{B_{2R}-\Sigma} \varepsilon^{1-p}|c(x)|^p|\eta\bar{\eta}v|^p dx.
\end{aligned}\end{equation}

\begin{equation}\label{eq:16}\begin{aligned}
&\int_{B_{2R}-\Sigma}|\eta\bar{\eta}w|^{p-1} |X(\eta\bar{\eta})v| dx
\leq \int_{B_{2R}-\Sigma} \varepsilon|\eta\bar{\eta}w|^p dx + \int_{B_{2R}-\Sigma} \varepsilon^{1-p}|X(\eta\bar{\eta})v|^p dx.
\end{aligned}\end{equation}
Choosing $\varepsilon$ small enough  we conclude that
\begin{equation}\label{eq:17}
\begin{aligned}
\int_{B_{2R}-\Sigma} |\eta\bar{\eta}w|^p dx
\leq  & C \bigg( \int_{B_{2R}-\Sigma} \bar{d}(x)|\eta\bar{\eta}v|^p
    +c(x)^p|\eta\bar{\eta}v|^p+\bar{b}(x)|\eta\bar{\eta}v|^{p-1} |X(\eta\bar{\eta})v| \\&+|X(\eta\bar{\eta})v|^p dx\bigg)
  =C(I_1+I_2+I_3+I_4).\end{aligned}
\end{equation}

For $I_1,I_2,I_3 $,
when $p<\tilde{v}$,
 applying H\"older   inequality  and Lemma \ref{sobolev} we yield that


\begin{equation}\label{eq:18}\begin{aligned}
I_1
\leq& ||\bar{d}(x)||_{L^\frac{\tilde{v}}{p-\frac{\varepsilon}{2}}(B_{2R}-\Sigma)}||\eta\bar{\eta}v||_{L^\frac{\tilde{v}p}{\tilde{v}-p+\frac{\varepsilon}{2}}(B_{2R}-\Sigma)}^p\leq C ||\bar{d}(x)||_{L^\frac{\tilde{v}}{p-\varepsilon}(B_{2R}-\Sigma)}||\eta\bar{\eta}v||_{L^\frac{\tilde{v}p}{\tilde{v}-p+\frac{\varepsilon}{2}}(B_{2R}-\Sigma)}^p
\\ \leq & C||\bar{d}(x)||_{L^\frac{\tilde{v}}{p-\varepsilon}(B_{2R}-\Sigma)}|| ||\eta\bar{\eta}v||_{L^p(B_{2R}-\Sigma)}^{\frac{\varepsilon}{2}} ||\eta\bar{\eta}v||_{L^{p^{\ast}}(B_{2R}-\Sigma)}^{p-\frac{\varepsilon}{2}}\\
\leq& C||\bar{d}(x)||_{L^\frac{\tilde{v}}{p-\varepsilon}(B_{2R}-\Sigma)}||\eta\bar{\eta}v||_{L^p(B_{2R}-\Sigma)}^{\frac{\varepsilon}{2}} (||X(\eta\bar{\eta})v||_{L^p(B_{2R}-\Sigma)}^{p-\frac{\varepsilon}{2}}+||\eta\bar{\eta}Xv||_{L^p(B_{2R}-\Sigma)}^{p-\frac{\varepsilon}{2}}),
\end{aligned}\end{equation}

\begin{equation}\label{eq:20}\begin{aligned}
I_2
\leq& ||c(x)||^p_{L^\frac{\tilde{v}}{1-\frac{\varepsilon}{2}}(B_{2R}-\Sigma)} ||\eta\bar{\eta}v||_{L^\frac{\tilde{v}p}{\tilde{v}-p+\frac{p\varepsilon}{2}}(B_{2R}-\Sigma)}^p\leq C ||c(x)||^p_{L^\frac{\tilde{v}}{1-\varepsilon}(B_{2R}-\Sigma)} ||\eta\bar{\eta}v||_{L^\frac{\tilde{v}p}{\tilde{v}-p+\frac{p\varepsilon}{2}}(B_{2R}-\Sigma)}^p
\\
\leq& C ||c(x)||^p_{L^\frac{\tilde{v}}{1-\varepsilon}(B_{2R}-\Sigma)} ||\eta\bar{\eta}v||_{L^p(B_{2R}-\Sigma)}^{\frac{p\varepsilon}{2}} ||\eta\bar{\eta}v||_{L^{p^{\ast}}(B_{2R}-\Sigma)}^{p(1-\frac{\varepsilon}{2})}\\
\leq& C||c(x)||^p_{L^\frac{\tilde{v}}{1-\varepsilon}(B_{2R}-\Sigma)} ||\eta\bar{\eta}v||_{L^p(B_{2R}-\Sigma)}^{\frac{p\varepsilon}{2}} (||{X}(\eta\bar{\eta})v||_{L^p(B_{2R}-\Sigma)}^{p(1-\frac{\varepsilon}{2})}+||\eta\bar{\eta}{X}v||_{L^p(B_{2R}-\Sigma)}^{p(1-\frac{\varepsilon}{2})}),
\end{aligned}\end{equation}

\begin{equation}\label{eq:19}\begin{aligned}
 I_3
\leq& ||\bar{b}(x)||_{L^\frac{\tilde{v}}{p-1}(B_{2R}-\Sigma)} ||{X}(\eta\bar{\eta})v||_{L^p(B_{2R}-\Sigma)} ||\eta\bar{\eta}v||_{L^{p^{\ast}}(B_{2R}-\Sigma)}^{p-1}\\
\leq & C||\bar{b}(x)||_{L^\frac{\tilde{v}}{p-1}(B_{2R}-\Sigma)} ||{X}(\eta\bar{\eta})v||_{L^p(B_{2R}-\Sigma)} (||{X}(\eta\bar{\eta})v||_{L^p(B_{2R}-\Sigma)}^{p-1}+||\eta\bar{\eta}{X}v||_{L^p(B_{2R}-\Sigma)}^{p-1} ) \\ \leq &C ||\bar{b}(x)||_{L^\frac{\tilde{v}}{p-1}(B_{2R}-\Sigma)} (||{X}(\eta\bar{\eta})v||_{L^p(B_{2R}-\Sigma)}^{p}+||{X}(\eta\bar{\eta})v||_{L^p(B_{2R}-\Sigma)}||\eta\bar{\eta}{X}v||_{L^p(B_{2R}-\Sigma)}^{p-1} )
\end{aligned}\end{equation}
with $p^{\ast}=\frac{p\tilde{v}}{\tilde{v}-p}$.
When $p=\tilde{v}$,  we have
 \begin{equation}\begin{aligned}\label{325}
I_1
\leq& ||\bar{d}(x)||_{L^{\frac{\tilde{v}}{\tilde{v}-\varepsilon}}(B_{2R}-\Sigma)}||\eta\bar{\eta}v||_{L^{\frac{\tilde{v}^2}{\varepsilon}}(B_{2R}-\Sigma)}^{\tilde{v}}
\\ \leq & ||\bar{d}(x)||_{L^{\frac{\tilde{v}}{\tilde{v}-\varepsilon}}(B_{2R}-\Sigma)}|| ||\eta\bar{\eta}v||_{L^{\tilde{v}}(B_{2R}-\Sigma)}^{\frac{\varepsilon}{2}} ||\eta\bar{\eta}v||_{L^{\frac{2\tilde{v}^2-\tilde{v}\varepsilon}{\varepsilon}}(B_{2R}-\Sigma)}^{\tilde{v}-\frac{\varepsilon}{2}}\\
\leq& C||\bar{d}(x)||_{L^{\frac{\tilde{v}}{\tilde{v}-\varepsilon}}(B_{2R}-\Sigma)}||\eta\bar{\eta}v||_{L^{\tilde{v}}(B_{2R}-\Sigma)}^{\frac{\varepsilon}{2}} (||X(\eta\bar{\eta})v||_{L^{\tilde{v}}(B_{2R}-\Sigma)}^{\tilde{v}-\frac{\varepsilon}{2}}+||\eta\bar{\eta}Xv||_{L^{\tilde{v}}(B_{2R}-\Sigma)}^{\tilde{v}-\frac{\varepsilon}{2}}),
\end{aligned}\end{equation}

\begin{equation}\begin{aligned}\label{eq:112}
I_2
\leq& ||c(x)||^{\tilde{v}}_{L^{\frac{\tilde{v}}{1-\varepsilon}}(B_{2R}-\Sigma)} ||\eta\bar{\eta}v||_{L^{\frac{\tilde{v}}{\varepsilon}}(B_{2R}-\Sigma)}^{\tilde{v}}\leq ||c(x)||^{\tilde{v}}_{L^{\frac{\tilde{v}}{1-\varepsilon}} (B_{2R}-\Sigma)} ||\eta\bar{\eta}v||_{L^{\tilde{v}}(B_{2R}-\Sigma)}^{\frac{\tilde{v}\varepsilon}{2}} ||\eta\bar{\eta}v||_{L^{\frac{(2-\varepsilon){\tilde{v}}}{\varepsilon}}(B_{2R}-\Sigma)}^{\tilde{v}(1-\frac{\varepsilon}{2})}\\
\leq& C||c(x)||^{\tilde{v}}_{L^{\frac{\tilde{v}}{1-\varepsilon}} (B_{2R}-\Sigma)} ||\eta\bar{\eta}v||_{L^{\tilde{v}}(B_{2R}-\Sigma)}^{\frac{{\tilde{v}}\varepsilon}{2}} (||{X}(\eta\bar{\eta})v||_{L^{\tilde{v}}(B_{2R}-\Sigma)}^{{\tilde{v}}(1-\frac{\varepsilon}{2})}+||\eta\bar{\eta}{X}v||_{L^{\tilde{v}}(B_{2R}-\Sigma)}^{{\tilde{v}}(1-\frac{\varepsilon}{2})}).
\end{aligned}\end{equation}
 \begin{equation}\begin{aligned}\label{326}
I_3
\leq& ||\bar{b}(x)||_{L^\frac{\tilde{v}}{\tilde{v}-1-\varepsilon}(B_{2R}-\Sigma)} ||{X}(\eta\bar{\eta})v||_{L^{\tilde{v}}(B_{2R}-\Sigma)}||\eta\bar{\eta}v||_{L^\frac{\tilde{v}(\tilde{v}-1)}{\varepsilon}(B_{2R}-\Sigma)}^{\tilde{v}-1}\\
\leq & C||\bar{b}(x)||_{L^\frac{\tilde{v}}{\tilde{v}-1-\varepsilon}(B_{2R}-\Sigma)} ||{X}(\eta\bar{\eta})v||_{L^{\tilde{v}}(B_{2R}-\Sigma)} (||{X}(\eta\bar{\eta})v||_{L^{\tilde{v}}(B_{2R}-\Sigma)}^{\tilde{v}-1}+||\eta\bar{\eta}{X}v||_{L^{\tilde{v}}(B_{2R}-\Sigma)}^{\tilde{v}-1} ) \\
 =&C||\bar{b}(x)||_{L^\frac{\tilde{v}}{\tilde{v}-1-\varepsilon}(B_{2R}-\Sigma)}(||{X}(\eta\bar{\eta})v||_{L^{\tilde{v}}(B_{2R}-\Sigma)}^{\tilde{v}} + ||{X}(\eta\bar{\eta})v||_{L^{\tilde{v}}(B_{2R}-\Sigma)} ||\eta\bar{\eta}{X}v||_{L^{\tilde{v}}(B_{2R}-\Sigma)} ^{{\tilde{v}} -1}),
\end{aligned}\end{equation}
Inserting these estimates into inequality \eqref{eq:17}, we obtain
\begin{equation}\label{ine4}
  \begin{aligned} ||\eta\bar{\eta}w||_{L^p(B_{2R}-\Sigma)}^p
    \leq & C\bigg( ||\eta\bar{\eta}v||_{L^p(B_{2R}-\Sigma)}^{\frac{\varepsilon}{2}}\left(||{X}(\eta\bar{\eta})v||_{L^p(B_{2R}-\Sigma)}^{p-{\frac{\varepsilon}{2}}}+||\eta\bar{\eta}{X}v||_{L^p(B_{2R}-\Sigma)}^{p-{\frac{\varepsilon}{2}}}\right)\\
    &+||\eta\bar{\eta}v||_{L^p(B_{2R}-\Sigma)}^{{\frac{p\varepsilon}{2}}}\left(||{X}(\eta\bar{\eta})v||_{L^p(B_{2R}-\Sigma)}^{p(1-{\frac{\varepsilon}{2}})}+||\eta\bar{\eta}{X}v||_{L^p(B_{2R}-\Sigma)}^{p(1-{\frac{\varepsilon}{2}})}\right) \\
     &+||{X}(\eta\bar{\eta})v||_{L^p(B_{2R}-\Sigma)}^p +  ||{X}(\eta\bar{\eta})v||_{L^p(B_{2R}-\Sigma)}||\eta\bar{\eta}{X}v||_{L^p(B_{2R}-\Sigma)}^{p-1} \bigg),
    \end{aligned}
\end{equation}
By \eqref{eq:13}and  \eqref{eq:14} we note that
$$\begin{aligned}
|{X}v|&\leq |\psi^{\frac{1}{p}}{X}u|+\frac{1}{p}|\psi^{\frac{1}{p}}{X}u|\{t \chi _{k,l}+t_0 \chi_l\}\leq (1+\frac{t}{p}) |\psi^{\frac{1}{p}}{X}u|=q|\psi^{\frac{1}{p}}{X}u|=qw.
\end{aligned}$$
Since $q\leq 1$, then
 \begin{equation}\label{eq:21}
 ||\eta\bar{\eta}Xv||_{L^p(B_{2R}-\Sigma)}^p \leq ||\eta\bar{\eta}w||_{L^p(B_{2R}-\Sigma)}^p\end{equation}
We  set
$$z=\frac{||\eta\bar{\eta}w||_{L^p(B_{2R}-\Sigma)}}{||X(\eta\bar{\eta})v||_{L^p(B_{2R}-\Sigma)}},\quad \xi=\frac{||\eta\bar{\eta} v||_{L^p(B_{2R}-\Sigma)}}{||X(\eta\bar{\eta})v||_{L^p(B_{2R}-\Sigma)}},$$
by \eqref{ine4} and \eqref{eq:21}, we obtain
\begin{equation}\label{ine5}
z^p \leq C\{\xi^{\frac{\varepsilon}{2}}(1+z^{p-\frac{\varepsilon}{2}})+\xi^{\frac{p\varepsilon}{2}}(1+z^{p(1-\frac{\varepsilon}{2})})+(1+z^{p-1})+1\}.
\end{equation}
By  Lemma \ref{inez}, we can get the estimate $z\leq C (1+\xi)$, that is,
\begin{equation}\begin{aligned}\label{first1}
&||\eta\bar{\eta}Xv||_{L^p(B_{2R}-\Sigma)} \leq ||\eta\bar{\eta}w||_{L^p(B_{2R}-\Sigma)} \\
\leq& C(||\bar{\eta}(X\eta) v||_{L^p(B_{2R}-\Sigma)} +||\eta (X\bar{\eta} ) v||_{L^p(B_{2R}-\Sigma)}+||\eta\bar{\eta}v||_{L^p(B_{2R}-\Sigma)}).\end{aligned}
\end{equation}
Meanwhile, by Lemma \ref{sobolev} we yield that
\begin{equation}\label{first2}
||\eta\bar{\eta}v||_{L^{p^{\ast}}(B_{2R}-\Sigma)} \leq C(||\bar{\eta}(X\eta) v||_{L^p(B_{2R}-\Sigma)} +||\eta (X\bar{\eta}) v||_{L^p(B_{2R}-\Sigma)}+||\eta\bar{\eta}v||_{L^p(B_{2R}-\Sigma)}).
\end{equation}
Here when $ p<\tilde{v},p^{*}=\frac{\tilde{v}p}{\tilde{v}-p}$; when $p=\tilde{v},p^{*}=2p$.

Now we take $\bar{\eta}=\bar{\eta}_v$, where $\bar{\eta}_v$ is constructed in Lemma \ref{caps}. By H\"older inequality,  we have
$$\begin{aligned}||\eta (X\bar{\eta}_v) v||_{L^p(B_{2R}-\Sigma)} &\leq ||{X}\bar{\eta}_v||_{L^s(B_{2R}-\Sigma)} ||v||_{L^\frac{sp}{s-p}(B_{2R}-\Sigma)}
\leq ||{X}\bar{\eta}_v||_{L^s(B_{2R}-\Sigma)} ||l^{q-q_0}{\bar{u}_k}^{q_0} ||_{L^\frac{sp}{s-p}(B_{2R}-\Sigma)}.\end{aligned} $$
By  \eqref{eq:133} and  Lemma \ref{caps},
as $ v \rightarrow \infty,$ $$||\eta (X\bar{\eta}_v) v||_{L^p(B_{2R}-\Sigma)} \leq ||X\bar{\eta}_v||_{L^s(B_{2R}-\Sigma)} l^{q-q_0} ||{\bar{u}_k}||_{L^{\frac{s(p-\theta)}{s-p}(1+\delta)}(B_{2R}-\Sigma)}\to 0.$$
Letting $v \rightarrow \infty$  and applying the dominated convergence theorem, \eqref{first1} and  \eqref{first2} can be simplified to
\begin{equation}\label{first12}
||\eta w||_{L^p(B_{2R}-\Sigma)} \leq C(|| {X}\eta v||_{L^p(B_{2R}-\Sigma)} +||\eta v||_{L^p(B_{2R}-\Sigma)}).\end{equation}
\begin{equation}\label{eq:22}
||\eta v||_{L^{p^{\ast}}(B_{2R}-\Sigma)} \leq C(||{X}\eta v||_{L^p(B_{2R}-\Sigma)} +||\eta v||_{L^p(B_{2R}-\Sigma)}).\end{equation}
 Here when $p<\tilde{v}$, $p^{*}=\frac{\tilde{v}p}{\tilde{v}-p}$ ; when  $ p=\tilde{v} $ $p^{*}=2p$.
Note also that as $l \rightarrow \infty$,
$$\left\{\begin{aligned}&\psi =( \bar{u}_k)^{t_0}(\bar{u}^{(l)}_k)^{t-t_0} \rightarrow ( \bar{u}_k)^{t}  ,\\
&v=\bar{u}_k\psi^{\frac{1}{p}}=(\bar{u}_k)^{q_0}(\bar{u}^{(l)}_k)^{q-q_0} \rightarrow ( \bar{u}_k)^{q}  ,\\
&w=|{X}u|\psi^{\frac{1}{p}} \rightarrow  |{X}u|( \bar{u}_k)^{q-1}.
\end{aligned}
\right. $$
  By \eqref{first12}, \eqref{eq:22} and monotone convergence theorem, we yield that
\begin{equation}\label{eq:23}||\eta  (\bar{u}_k)^{q-1}{X}u ||_{L^p(B_{2R}-\Sigma)} \leq C(|| ( \bar{u}_k)^{q}{X}\eta||_{L^p(B_{2R}-\Sigma)} +||\eta ( \bar{u}_k)^{q}||_{L^p(B_{2R}-\Sigma)}).\end{equation}
\begin{equation}\label{first13}
\begin{aligned}
||\eta \bar{u}_k^q||_{L^{p^{\ast}}(B_{2R}-\Sigma)} \leq C(|| ( \bar{u}_k)^{q}{X}\eta||_{L^p(B_{2R}-\Sigma)}+||\eta ( \bar{u}_k)^{q}||_{L^p(B_{2R}-\Sigma)}).
\end{aligned}
\end{equation}

Now  let $r_0 \in \mathbb{N}$ such that
$$(\frac{p}{p*})^{r_0+1} <q_0\leq (\frac{p}{p*})^{r_0}, $$
and  take
$q=q_j=(\frac{p*}{p})^{j}q_0$
where $j=0,1...,r_0$. Then we
note the fact that $$p\cdot q_{j}=p^{\ast}\cdot q_{j-1}.$$ Selecting appropriate functions $\eta$, after finite iteration of \eqref{first13}  we obtain that
\begin{equation}\label{itera}\begin{aligned}
||\bar{u}_k||_{L^p(B_{\frac{3R}{2}}-\Sigma)} \leq &C || \bar{u}_k||_{L^{p^{\ast}\cdot {q_{r_0}}}(B_{\frac{3R}{2}}-\Sigma)}
\leq  C|| \bar{u}_k||_{L^{p\cdot q_{r_0}}(B_{\frac{7R}{4}}-\Sigma)}=C|| \bar{u}_k||_{L^ { q_{(r_0-1)}\cdot p^{\ast}}(B_{\frac{7R}{4}}-\Sigma)}
\\ \leq&  C|| \bar{u}_k||_{L^{p\cdot {q_{0}}}(B_{2R}-\Sigma)}\leq  C|| \bar{u}_k||_{L^{\frac{s(p-\theta)}{s-p}(1+\delta)}(B_{2R}-\Sigma)}
<\infty.\end{aligned}
\end{equation}

Finally, taking $q=1$ in inequality \eqref{eq:23} and \eqref{first13} we have
\begin{equation}\label{first14}
\begin{aligned}
&||u||_{L^p(B_R-\Sigma)} \leq ||\bar{u}_k||_{L^p(B_R-\Sigma)}  < \infty.\\
&||{X}u ||_{L^p(B_R-\Sigma)} \leq C ||\bar{u}_k||_{L^p(B_R-\Sigma)}< \infty.
\end{aligned}
\end{equation}

Step2: we will show that $u\in \mathnormal{W}^{1,p}_{X}(B_R)$ and is a weak solution of \eqref{equa} in $B_R$.

Firstly, we set $u=0, {X}u=0$ on $\Sigma$ and construct the function sequence
\begin{equation}\label{eq:25}
\widetilde{u}_m =\min\{\max (u,-m),m\},  \ \ \  m>0. \end{equation}
Obviously, $\widetilde{u}_m \in \mathnormal{W}^{1,p}_{X}\cap L^{\infty}(B_R-\Sigma).$   According to the Definition \ref{def2.1}, for any function $\varphi \in \mathnormal{C}^{\infty}_0(B_R)$, we have that
$$\int_{B_R}( {X}_i \widetilde{u}_m) \varphi\bar{\eta}_v dx=\int_{B_R} \widetilde{u}_m \bar{\eta}_v {X}^{*}_i \varphi dx-\int_{B_R} \widetilde{u}_m  \varphi  {X}_i\bar{\eta}_v dx, $$
where $X^{*}_i \varphi =-\sum\limits_{j=1}^{n}\partial_{x_j}(h_{ij}(x)\varphi),$  and  $\bar{\eta}_v$ is the function in Lemma \ref{caps}. As $v \rightarrow \infty$, by the boundness of $\widetilde{u}_m$, we obtain that
 $$\begin{aligned}
 \left|\int_{B_R} \widetilde{u}_m  \varphi  {X}\bar{\eta}_v dx\right| \leq ||{X}\bar{\eta}_v||_{L^{s}(B_R)} ||\widetilde{u}_m  \varphi ||_{L^\frac{s}{s-1}(B_R)}\to 0.\end{aligned}$$
 Then by  the dominated convergence theorem, the following is holds
$$\int_{B_R} {X}_{i} \widetilde{u}_m \varphi dx=\int_{B_R} \widetilde{u}_m  {X}^{*}_{i} \varphi dx.$$
Therefore, $\widetilde{u}_m \in \mathnormal{W}^{1,p}_{X}\cap L^{\infty}(B_R).$ In addition, due to $|\widetilde{u}_m|\leq|u|,|{X} \widetilde{u}_m|\leq|{X}u|$ on $B_R$,  $||\widetilde{u}_m||_{\mathnormal{W}^{1,p}_{X}(B_R)}$ is uniformly bounded. Hence there exists  a subsequence of $\widetilde{u}_m$ converges weakly in $\mathnormal{W}^{1,p}_{X}(B_R)$. On the other hand, $\widetilde{u}_m$ converges strongly in $L^p(B_R)$ to $u$ by \eqref{eq:25}. Thus we obtain that $u\in \mathnormal{W}^{1,p}_{X}(B_R)$.

Secondly, if $\bar{\eta}_v$ is the function in Lemma \ref{caps},  for any $\phi \in \mathnormal{C}^{\infty}_0(B_R)$ we have
\begin{equation}\label{solu}
\begin{aligned}
0=&\int_{B_R} [{X}(\phi\bar{\eta}_v)\cdot A(x,u,{X}u)-\phi\bar{\eta}_v B(x,u,{X}u) ]dx\\
=&\int_{B_R}{\bar{\eta}}_v  ({X}\phi\cdot A(x,u,{X}u)-\phi B(x,u,{X}u)) dx+\int_{B_R} \phi A(x,u,{X}u)\cdot {X}\bar{\eta}_v dx.
\end{aligned}
\end{equation}
Since $u\in \mathnormal{W}^{1,p}_{X}(B_R)$, by \eqref{cond} we have
$ A(x,u,Xu)\in L^{\frac{p}{p-1}}(B_R)$.
As $v \rightarrow \infty$,
$$\begin{aligned}\left|\int_{B_R} \phi A(x,u,{X}u)\cdot {X}\bar{\eta}_v dx\right|\leq& ||{X}\bar{\eta}_v||_{L^{s}(B_R)} ||\phi A(x,u, {X}u) ||_{L^{\frac{s}{s-1}}(B_R)}\\
\leq & C ||{X}\bar{\eta}_v||_{L^{s}(B_R)} || A(x,u,{X}u) ||_{L^{\frac{p}{p-1}}(B_R)}\to 0. \end{aligned}$$
Thus \eqref{solu} yields
$$\int_{B_R} X\phi\cdot A-\phi B dx=0,$$
which implies  $u$ is a weak solution of \eqref{equa} in $B_R$.
\end{proof}

\begin{proof}[\textbf{Proof of Theorem \ref{Thmholderctn}}] Analogous to Theorem \ref{Thmremovsing}, we consider that for any open ball  $B_{2R}\subset\subset \Omega $, the function $ u\in \mathnormal{W}^{1,p}_{X}(B_R)$ and  $u$ is a  bounded weak solution of \eqref{equa} in $B_R.$ The key is to prove that $||Xu||_{L^p(B_R-\Sigma)}$ is finite.

Let $$ k=1+(||e||_{B_{2R}}+||f||_{B_{2R}})^{\frac{1}{p-1}}+||g||_{B_{2R}}^{\frac{1}{p}},$$ where $||e||,||f||$ and $||g||$ are as defined in the corresponding norm of \eqref{func}. Setting $$\bar{u}=|u|+k,\  \bar{b}(x)=b(x)+k^{1-p}e(x), \ \bar{d}(x)=d(x)+k^{1-p}f(x)+k^{-p}g(x),$$ we note that \eqref{thmesti1} still holds.

Let $\phi=(\eta\bar{\eta}_v)^p u e^{c_0 |u|},$ where $\eta\in \mathnormal{C}^{\infty}_0(B_{2R})$  with $\eta =1$ on $B_R,$ $0\leq \eta \leq 1,$  and $\bar{\eta}_v$ is the function in Lemma \ref{caps}. By lemma \ref{chain3} we get that $\phi \in \mathnormal{W}^{1,p}_{X,0}(B_{2R}-\Sigma)$, and  we have
$$ X \phi=p(\eta\bar{\eta}_v)^{p-1}X(\eta\bar{\eta}_v)u e^{c_0 |u|}+(1+c_0|u|)(\eta\bar{\eta}_v)^p e^{c_0 |u|} Xu.$$

Based on the fact that $u$ is a weak solution of \eqref{equa} in $B_{2R}-\Sigma$, choosing  $\phi$ as the test function, we have
\begin{equation}\label{eq:24}
 \begin{aligned}&\int_{B_{2R}-\Sigma}  (\eta\bar{\eta}_v)^p |X \bar{u}|^p dx
\leq C\bigg( \int_{B_{2R}-\Sigma} \bar{d}(x)|\eta\bar{\eta}_v|^p dx+\int_{B_{2R}-\Sigma} c(x)|\eta\bar{\eta}_v|^p|X \bar{u}|^{p-1} dx\\+&\int_{B_{2R}-\Sigma} |\eta\bar{\eta}_v|^{p-1}|X(\eta\bar{\eta}_v)||X \bar{u}|^{p-1} dx+\int_{B_{2R}-\Sigma} \bar{b}(x) |\eta\bar{\eta}_v|^{p-1}|X(\eta\bar{\eta}_v)| dx\bigg),
\end{aligned}
 \end{equation}
where $C$ depends on $n,p,a,k,c_0,||u||_{L^{\infty}(\Omega)}.$

By   Young inequality, we have
$$ \int_{B_{2R}-\Sigma} c(x)|\eta\bar{\eta}_v|^p|X \bar{u}|^{p-1} dx\leq\int_{B_{2R}-\Sigma} \varepsilon |\eta\bar{\eta}_v|^p |X \bar{u}|^p dx + \int_{B_{2R}-\Sigma}  \varepsilon^{1-p}c(x)^p |\eta\bar{\eta}_v|^p dx,$$
$$\int_{B_{2R}-\Sigma} |\eta\bar{\eta}_v|^{p-1}|X(\eta\bar{\eta}_v)||X \bar{u}|^{p-1} dx\leq \int_{B_{2R}-\Sigma} \varepsilon |\eta\bar{\eta}_v|^p |X \bar{u}|^p dx + \int_{B_2-\Sigma} \varepsilon^{1-p} |X(\eta\bar{\eta}_v)|^p dx.$$
Then \eqref{eq:24} can be simplified to
\begin{equation}\label{511}
 \begin{aligned}&\int_{B_{2R}-\Sigma}  (\eta\bar{\eta}_v)^p |X \bar{u}|^p dx
\leq C\bigg( \int_{B_{2R}-\Sigma} \bar{d}(x)|\eta\bar{\eta}_v|^p dx+\int_{B_{2R}-\Sigma} c(x)^p |\eta\bar{\eta}_v|^p dx\\+&\int_{B_{2R}-\Sigma} |X(\eta\bar{\eta}_v)|^p dx+\int_{B_{2R}-\Sigma} \bar{b}(x) |\eta\bar{\eta}_v|^{p-1}|X(\eta\bar{\eta}_v)| dx\bigg)=C(I_1+I_2+I_3+I_4).
\end{aligned}
 \end{equation}

For $I_1,I_2,I_3$, when $p \leq \tilde{v},$ applying H\"older inequality and Lemma \ref{sobolev} we obtain
\begin{equation}\label{eq:52}\begin{aligned}
I_1\leq& ||\bar{d}(x)||_{L^{\frac{\tilde{v}}{p-\varepsilon}}(B_{2R}-\Sigma)}||\eta\bar{\eta}_v||_{L^{\frac{\tilde{v}p}{\tilde{v}-p+\varepsilon}}(B_{2R}-\Sigma)}^p
\leq C||\bar{d}(x)||_{L^{\frac{\tilde{v}}{p-\varepsilon}}(B_{2R}-\Sigma)}|| ||\eta\bar{\eta}_v||_{L^{p^{\ast}}(B_{2R}-\Sigma)}^p\\
\leq& C||\bar{d}(x)||_{L^{\frac{\tilde{v}}{p-\varepsilon}}(B_{2R}-\Sigma)}\left(||\bar{\eta}_vX\eta||_{L^p(B_{2R}-\Sigma)}^p + ||\eta X\bar{\eta}_v||_{L^p(B_{2R}-\Sigma)}^p\right),\\
\end{aligned}\end{equation}
here  $p^{\ast}=\frac{p\tilde{v}}{\tilde{v}-p},$ when $ p<\tilde{v};\ p^{\ast}=\frac{{\tilde{v}}^2}{\varepsilon}$, when $ p=\tilde{v}$.
\begin{equation}\label{eq:53}\begin{aligned}
I_2
\leq& ||c(x)||_{L^{\frac{\tilde{v}}{1-\varepsilon}}(B_{2R}-\Sigma)}^p||\eta\bar{\eta}_v||_{L^{\frac{\tilde{v}p}{\tilde{v}-p+p\varepsilon}}(B_{2R}-\Sigma)}^p
\leq C||c(x)||_{L^{\frac{\tilde{v}}{1-\varepsilon}}(B_{2R}-\Sigma)}^p||\eta\bar{\eta}_v||_{L^{p^{\ast}}(B_{2R}-\Sigma)}^p\\
\leq& C||c(x)||_{L^{\frac{\tilde{v}}{1-\varepsilon}}(B_{2R}-\Sigma)}^p\left(||\bar{\eta}_v X\eta||_{L^p(B_{2R}-\Sigma)}^p + ||\eta X\bar{\eta}_v||_{L^p(B_{2R}-\Sigma)}^p\right),\\
\end{aligned}\end{equation}
here  $p^{\ast}=\frac{p\tilde{v}}{\tilde{v}-p},$ when $ p<\tilde{v};\ p^{\ast}=\frac{{\tilde{v}}}{\varepsilon}$, when $ p=\tilde{v}$.
\begin{equation}\label{eq:54}\begin{aligned}
I_3
\leq& \int_{B_{2R}-\Sigma} ||X(\eta\bar{\eta}_v)||_{L^p(B_{2R}-\Sigma)}^p dx
 =C\left(||\bar{\eta}_v X\eta||_{L^p(B_{2R}-\Sigma)}^p + ||\eta X\bar{\eta}_v||_{L^p(B_{2R}-\Sigma)}^p\right).\\
\end{aligned}\end{equation}

For $I_4$,
when $p<\tilde{v},$  we can get the following estimates
\begin{equation}\label{eq:55}\begin{aligned}
I_4
\leq& ||\bar{b}(x)||_{L^{\frac{\tilde{v}}{p-1}}(B_{2R}-\Sigma)}||X(\eta\bar{\eta}_v)||_{L^p(B_{2R}-\Sigma)}||\eta\bar{\eta}_v||_{L^{p^{\ast}}(B_{2R}-\Sigma)}^{p-1}\\
\leq& C||\bar{b}(x)||_{L^{\frac{\tilde{v}}{p-1}}(B_{2R}-\Sigma)}||X(\eta\bar{\eta}_v)||_{L^p(B_{2R}-\Sigma)}^p
\\ \leq & C||\bar{b}(x)||_{L^{\frac{\tilde{v}}{p-1}}(B_{2R}-\Sigma)}\left(||\bar{\eta}_v X\eta||_{L^p(B_{2R}-\Sigma)}^p + ||\eta X\bar{\eta}_v||_{L^p(B_{2R}-\Sigma)}^p\right),\\
\end{aligned}\end{equation}
where $p^{\ast}=\frac{p\tilde{v}}{\tilde{v}-p};$ when $p=\tilde{v}$ ,we get that
\begin{equation}\label{eq:5555}\begin{aligned}
I_4
\leq& ||\bar{b}(x)||_{L^{\frac{\tilde{v}}{{\tilde{v}}-1-\varepsilon}}(B_{2R}-\Sigma)}||X(\eta\bar{\eta}_v)||_{L^{\tilde{v}}(B_{2R}-\Sigma)}||\eta\bar{\eta}_v||_{L^{\frac{{\tilde{v}}({\tilde{v}}-1)}{\varepsilon}}(B_{2R}-\Sigma)}^{{\tilde{v}}-1}\\
\leq& C||\bar{b}(x)||_{L^{\frac{\tilde{v}}{{\tilde{v}}-1-\varepsilon}}(B_{2R}-\Sigma)}||X(\eta\bar{\eta}_v)||_{L^{\tilde{v}}(B_{2R}-\Sigma)}^{\tilde{v}}\\
\leq& C||\bar{b}(x)||_{L^{\frac{\tilde{v}}{{\tilde{v}}-1-\varepsilon}}(B_{2R}-\Sigma)}\left(||\bar{\eta}_v X\eta||_{L^{\tilde{v}}(B_{2R}-\Sigma)}^{\tilde{v}} + ||\eta X\bar{\eta}_v||_{L^{\tilde{v}}(B_{2R}-\Sigma)}^{\tilde{v}}\right).\\
\end{aligned}\end{equation}

Inserting these estimates into inequality \eqref{511}, we obtain that
\begin{equation}\label{561}
\int_{B_{2R}-\Sigma}   (\eta\bar{\eta}_v)^p |X \bar{u}|^p dx
\leq C (||\bar{\eta}_vX\eta||_{L^p(B_{2R}-\Sigma)}^p + ||\eta X\bar{\eta}_v||_{L^s(B_{2R}-\Sigma)}^p).
\end{equation}
 Letting $v \rightarrow \infty,$ by Lemma \ref{caps}, we get
\begin{equation}\label{562}
\int_{B_R-\Sigma} |X \bar{u}|^p dx  \leq C,
\end{equation}
where $C$ depends on $n,p,\varepsilon ,a,c_0,k,||d||_{\Omega},||c||_{\Omega},||b||_{\Omega},||u||_{L^{\infty}(\Omega)}.$ That is $||Xu||_{L^p(B_R-\Sigma)}$ is finite, then the rest follows the process of  proving Theorem \ref{Thmremovsing} and we can get the conclusion.

\end{proof}

\section{ Harnack inequality and  H\"older continuity }
In this section, we give the proof of Theorem \ref{Thm5.1} and Theorem \ref{Thmholderctn1} to  obtain the Harnack inequality and H\"older continuity  for non-negative bounded weak solutions of \eqref{equa}.  Before that, let's prepare by giving some of the existing lemmas.
\begin{lemma}[\cite{DN}, Lemma 8.23]\label{lemma3.7}
Let $\omega$ be a non-decreasing function on an interval $(0, r_0]$ satisfying  the inequality
$$\omega(\tau r)\leq \gamma \omega(r)+\sigma(r), \ \forall \ r\leq r_0,$$
where $\sigma$ is also non-decreasing and $0<\gamma, \tau<1$. Then for any $\mu\in(0,1)$ and $r\leq r_0$, we have
$$ \omega(r)\leq C(\frac{r}{r_0})^{\beta}\omega (r_0)+\sigma (r^{\mu }r_0^{1-\mu}),$$
where $C=C(\gamma,\tau)$ and $\beta=\beta(\gamma,\tau,\mu)$ are positive constants.
\end{lemma}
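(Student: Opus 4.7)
The strategy is a classical discrete-iteration argument. The plan is to apply the one-step hypothesis successively at the scales $r=\tau^{j}r_{0}$, $j=0,\dots,k-1$, which by induction on $k$ produces the chained bound
$$\omega(\tau^{k}r_{0})\leq \gamma^{k}\omega(r_{0})+\sum_{i=0}^{k-1}\gamma^{i}\sigma(\tau^{k-1-i}r_{0}).$$
For arbitrary $r\in(0,r_{0}]$, I then choose the unique integer $k\geq 0$ with $\tau^{k+1}r_{0}<r\leq\tau^{k}r_{0}$, and invoke monotonicity of $\omega$ to get $\omega(r)\leq\omega(\tau^{k}r_{0})$. This reduces the whole problem to bounding the displayed right-hand side.

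The core step is to optimise the split of the sum at an index $m\in\{0,\dots,k\}$. For $i<m$, monotonicity of $\sigma$ gives $\sigma(\tau^{k-1-i}r_{0})\leq\sigma(\tau^{k-m}r_{0})$; for $i\geq m$ I use $\sigma(\tau^{k-1-i}r_{0})\leq\sigma(r_{0})$ together with the geometric tail estimate $\sum_{i\geq m}\gamma^{i}\leq\gamma^{m}/(1-\gamma)$. The calibration $m=\lfloor(1-\mu)k\rfloor$ then turns the first contribution into $\sigma$ evaluated at $\tau^{k-m}r_{0}\leq C\,r^{\mu}r_{0}^{1-\mu}$ (using $r/r_{0}\leq\tau^{k}<\tau^{-1}r/r_{0}$) and produces $\gamma^{m}\leq C(r/r_{0})^{\beta}$ with
$$\beta=(1-\mu)\,\frac{\log(1/\gamma)}{\log(1/\tau)}>0.$$
The leading term $\gamma^{k}\omega(r_{0})$ is controlled analogously and absorbed into $C(r/r_{0})^{\beta}\omega(r_{0})$, since its exponent $\log(1/\gamma)/\log(1/\tau)$ is at least $\beta$.

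The main obstacle will be the final bookkeeping. A straightforward iteration leaves a residual term of the form $C(r/r_{0})^{\beta}\sigma(r_{0})$ as well as a spurious multiplicative factor of the type $\tau^{-\mu}$ appearing inside the argument of $\sigma$ (arising from the slack $\tau^{k+1}r_{0}<r$). These must be absorbed, respectively, into the $C(r/r_{0})^{\beta}\omega(r_{0})$ contribution (by letting $C$ depend on $\gamma,\tau$) and into an enlarged constant or a marginally smaller effective $\mu$ with a compensating decrease of $\beta$. Such reductions are standard but need to be carried out carefully to arrive exactly at the stated form with $C=C(\gamma,\tau)$ and $\beta=\beta(\gamma,\tau,\mu)$.
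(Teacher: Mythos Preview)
The paper does not prove this lemma; it is quoted from Gilbarg--Trudinger (Lemma 8.23) and used as a black box in the proof of Theorem~\ref{Thmholderctn1}. Your discrete-iteration plan is precisely the standard argument used there, so the overall strategy is correct.

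There is, however, a genuine gap in your final absorption step. After iterating all $k$ times and splitting the sum at $m=\lfloor(1-\mu)k\rfloor$, the tail contributes
\[
\sum_{i\ge m}\gamma^{i}\sigma(\tau^{k-1-i}r_{0})\le \frac{\gamma^{m}}{1-\gamma}\,\sigma(r_{0})\sim C\Bigl(\frac{r}{r_{0}}\Bigr)^{\beta}\sigma(r_{0}).
\]
You propose to absorb this into $C(r/r_{0})^{\beta}\omega(r_{0})$, but nothing in the hypotheses relates $\sigma(r_{0})$ to $\omega(r_{0})$: take for instance $\omega\equiv 0$ and $\sigma$ a step function jumping only at $r_{0}$, where the hypothesis and the stated conclusion both hold trivially yet your residual term is nonzero. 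Nor can it be absorbed into $\sigma(r^{\mu}r_{0}^{1-\mu})$, since $\sigma$ is merely non-decreasing and $r^{\mu}r_{0}^{1-\mu}\le r_{0}$.

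The remedy is not to iterate all the way from $r_{0}$ but only from the intermediate scale $R_{1}:=\tau^{k-m}r_{0}$. Applying the hypothesis $m$ times gives
\[
\omega(\tau^{k}r_{0})=\omega(\tau^{m}R_{1})\le \gamma^{m}\omega(R_{1})+\sum_{i=0}^{m-1}\gamma^{i}\sigma(\tau^{m-1-i}R_{1})\le \gamma^{m}\omega(r_{0})+\frac{1}{1-\gamma}\,\sigma(R_{1}),
\]
where every $\sigma$-argument is at most $R_{1}$ and no $\sigma(r_{0})$ ever appears. With $m\approx(1-\mu)k$ one has $\gamma^{m}\le C(r/r_{0})^{\beta}$ with $\beta=(1-\mu)\log(1/\gamma)/\log(1/\tau)$ and $R_{1}\le \tau^{-\mu}r^{\mu}r_{0}^{1-\mu}$; your proposed handling of the harmless factor $\tau^{-\mu}$ inside $\sigma$ (by slightly shrinking $\mu$) is then correct. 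Note also that a constant $\tfrac{1}{1-\gamma}$ necessarily multiplies the $\sigma$-term; the version printed in the paper omits it, but it is present in the Gilbarg--Trudinger original.
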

\begin{lemma}[\cite{GSP}, Proposition 2.4]\label{doubling}
For any compact subset $K\subset U$, there exist $C >1$ and $\rho_K > 0$ such that
$$|B(x,2r)|\leq C|B(x,r)|  \ \ \ for \ all \ x\in K,\  0<r\leq \frac{\rho_K}{2}. $$
\end{lemma}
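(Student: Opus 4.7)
The plan is to derive the doubling property from the Nagel--Stein--Wainger (NSW) volume formula for subunit balls, which precisely identifies $|B(x,r)|$ with the polynomial $\Lambda(x,r)$ introduced earlier in the paper. The doubling inequality then follows from the fact that $\Lambda(\cdot,r)$ is polynomial in $r$ of uniformly bounded degree.

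First, I would invoke the NSW ball-box theorem: for any compact set $K \subset U$, there exist constants $C_0 > 1$ and $\rho_K > 0$ such that
\begin{equation*}
C_0^{-1}\,\Lambda(x,r) \;\leq\; |B(x,r)| \;\leq\; C_0\,\Lambda(x,r), \qquad x \in K,\ 0 < r \leq \rho_K.
\end{equation*}
This is the key analytic input and the main obstacle in a self-contained argument: it requires the Rothschild--Stein lifting of the vector fields to free nilpotent ones, an exponential-map parametrization of subunit balls by ``boxes,'' and a Jacobian computation showing that the measure of the box matches $\Lambda(x,r)$ up to constants uniform over $K$. I would take this as established (as it is standard in the NSW theory used throughout \cite{GSP, LC1}).

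Second, I would exploit the polynomial structure of $\Lambda$. By the definition
\begin{equation*}
\Lambda(x,r) = \sum_I |\lambda_I(x)|\, r^{d(I)},
\end{equation*}
and since for $x \in \overline{\Omega} \supset K$ only multi-indices with $d(I) \leq Q$ (the local homogeneous dimension relative to $\Omega$, cf.\ \eqref{eq:41}) contribute a nonzero $\lambda_I(x)$ somewhere on $K$, each monomial satisfies $(2r)^{d(I)} \leq 2^Q\, r^{d(I)}$. Hence
\begin{equation*}
\Lambda(x, 2r) \;\leq\; 2^Q\,\Lambda(x, r), \qquad x \in K,\ r > 0.
\end{equation*}

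Finally, chaining the two estimates for $0 < r \leq \rho_K/2$ (so that $2r \leq \rho_K$),
\begin{equation*}
|B(x, 2r)| \;\leq\; C_0\,\Lambda(x, 2r) \;\leq\; C_0\cdot 2^Q\,\Lambda(x, r) \;\leq\; C_0^{2}\cdot 2^Q\,|B(x, r)|,
\end{equation*}
so the constant $C := C_0^{2}\cdot 2^Q > 1$ and the radius bound $\rho_K$ depending on $K$ do the job. The only nontrivial step is the NSW equivalence $|B(x,r)| \asymp \Lambda(x,r)$; once it is available, the doubling property is essentially a one-line consequence of the polynomial nature of $\Lambda$ in~$r$.
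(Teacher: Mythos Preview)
The paper does not supply its own proof of this lemma: it is quoted verbatim as Proposition~2.4 of \cite{GSP} and used as a black box. Your argument via the Nagel--Stein--Wainger equivalence $|B(x,r)|\asymp\Lambda(x,r)$ is the standard one (and is essentially how the result is proved in the NSW literature from which \cite{GSP} draws), so there is no discrepancy to discuss.

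One small technical point: the lemma is stated for an arbitrary compact $K\subset U$, whereas the constant $Q$ in \eqref{eq:41} is defined relative to the fixed set $\Omega$. Your inequality $\Lambda(x,2r)\le 2^{Q}\Lambda(x,r)$ is justified only for $x\in\overline{\Omega}$. To cover a general $K$, either replace $Q$ by the analogous local homogeneous dimension relative to $K$, or simply use the universal bound $d(I)\le n\,i_0$ (valid for every $n$-tuple $I$ since each $Y_{i_k}$ has formal degree at most $i_0$), which gives $\Lambda(x,2r)\le 2^{n i_0}\Lambda(x,r)$ for all $x\in U$. With that adjustment your proof is complete.
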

\begin{lemma}[\cite{VMO}, Lemma 8.23]\label{cutoff}
Let $\Omega$ is a  bounded open  domain of $U$, and  $B(x_0,r_1)\subset B(x_0,r_2)\subset\Omega$.  Then  there exists a function  $\eta\in C(B(x_0,r_2))\cap W_{X,0}^{1,q}(B(x_0,r_2))$,  $q\in  [1,+\infty)$,  which  satisfies $supp \eta\in B(x_0,r_2),$ $ 0\leq \eta\leq 1,$ $\eta \equiv 1 $  on $B(x_0,r_1)$  and
 $$ |X\eta|\leq \frac{C}{(r_2-r_1)}, \ a.e. \  in   \ B(x_0,r_2) . $$
\end{lemma}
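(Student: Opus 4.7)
The plan is to construct $\eta$ by composing a one-variable cutoff with the subunit distance from $x_0$. Fix a smooth $\varphi \colon \mathbb{R}\to [0,1]$ with $\varphi\equiv 1$ on $(-\infty,r_1]$, $\varphi\equiv 0$ on $[r_2,\infty)$, and $|\varphi'|\le C/(r_2-r_1)$, and set
\[
\eta(x) := \varphi\bigl(d(x,x_0)\bigr),
\]
where $d$ is the subunit metric of Definition \ref{ccd}. The pointwise properties ($0\le\eta\le1$, $\eta\equiv1$ on $B(x_0,r_1)$, $\eta\equiv0$ outside $B(x_0,r_2)$, and the support condition) are then immediate from the definition of the subunit ball.

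Continuity of $\eta$ in the Euclidean topology follows from the classical fact that, under H\"ormander's condition, the subunit topology on $U$ agrees with the Euclidean topology (this is a consequence of the Nagel--Stein--Wainger ball-box estimate, which also underlies the doubling property in Lemma \ref{doubling}). Thus $x\mapsto d(x,x_0)$ is Euclidean-continuous on $U$, and hence so is $\eta=\varphi\circ d(\cdot,x_0)$.

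The key estimate is $|X\eta|\le C/(r_2-r_1)$ a.e. I would deduce it from the Lipschitz property of the subunit distance along the vector fields: for a.e.\ $x$, $|X_j d(\cdot,x_0)|(x)\le 1$ for each $j$. Heuristically, this holds because if $\gamma$ is a subunit curve from $x_0$ to $x$ with length close to $d(x,x_0)$, appending a short subunit segment in direction $X_j$ produces a competitor, so $d(\cdot,x_0)$ cannot grow faster than $1$ along the $X_j$-flow. Once this is known, the chain rule of Lemma \ref{chain}, applied to $\varphi \in C^1$ with bounded derivative, gives $X_j\eta = \varphi'(d(\cdot,x_0))\,X_j d(\cdot,x_0)$ a.e., so $|X\eta|\le|\varphi'|\le C/(r_2-r_1)$ a.e. Integrability in $L^q$ on the bounded set $B(x_0,r_2)$ is then automatic for every $q\in[1,+\infty)$.

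Finally, to place $\eta$ in $W_{X,0}^{1,q}(B(x_0,r_2))$ rather than merely $W_X^{1,q}$, I would mollify in Euclidean coordinates: since $\eta$ is compactly supported strictly inside $B(x_0,r_2)$, the standard mollification $\eta_\varepsilon\in C_0^\infty(B(x_0,r_2))$ for small $\varepsilon$, and Friedrichs-type commutator estimates for smooth vector fields yield $\eta_\varepsilon\to\eta$ in $W_X^{1,q}$. The main obstacle is the subunit Lipschitz bound $|Xd(\cdot,x_0)|\le 1$ almost everywhere; the cleanest route avoiding heavy Pansu-type machinery is to first approximate $d(\cdot,x_0)$ by smooth subunit functions (obtained e.g.\ by regularizing the Hamilton--Jacobi characterization of $d$) for which the bound is transparent, and then pass to the limit.
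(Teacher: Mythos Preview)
The paper does not prove this lemma; it is simply quoted from \cite{VMO} (Kogoj--Lanconelli), so there is no proof in the paper to compare against. Your construction $\eta=\varphi\bigl(d(\cdot,x_0)\bigr)$ is exactly the standard one used in that reference and elsewhere, and the outline is correct.

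One remark on the step you flag as the ``main obstacle'': you do not need Hamilton--Jacobi regularization or Pansu machinery. The triangle inequality gives that $x\mapsto d(x,x_0)$ is $1$-Lipschitz with respect to $d$ itself. It is a general fact (see e.g.\ Garofalo--Nhieu \cite{NGDMN}, or Franchi--Serapioni--Serra Cassano) that any function which is $L$-Lipschitz in the subunit metric belongs to $W_{X,loc}^{1,\infty}$ with $|X f|\le L$ a.e.; the proof goes exactly via your heuristic, made rigorous by noting that along the flow $t\mapsto\exp(tX_j)(x)$ the function is Euclidean-Lipschitz in $t$ with constant $L$, then applying Fubini and classical Rademacher along flow lines. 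Once you have $d(\cdot,x_0)\in W_{X,loc}^{1,\infty}$ with $|Xd|\le 1$ a.e., the chain rule of Lemma \ref{chain} (applied to a mollified $\varphi$ if you want to be literal about $C^1$) gives the gradient bound, and mollification as in \cite{NGDMN} places $\eta$ in $W_{X,0}^{1,q}(B(x_0,r_2))$.
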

\begin{lemma}%
[(poincar\'e)\cite{J},  Theorem 1]\label{poincare}
Let $W$ be an connected open set in $\mathbb{R}^{n}$, and   $X_1,...,X_m$ be a collection of $\mathnormal{C}^{\infty}$ vector fields defined in a neighborhood $W_0$ of the closure $\overline{W}$ in $\mathbb{R}^{n}$. Let $K$ be a compact subset of $W$. Then there is constant $C$ and a radius $r_0 >0$ such that for every $x \in K $ and $0<r<r_0,$ and if $1\leq p<n\gamma$ and $\frac{1}{q}=\frac{1}{p}-\frac{1}{n\gamma}$, then
$$ \left(\frac{1}{|B(x,r)|}\int_{B(x,r)} |v-v_B|^q dx \right)^{\frac{1}{q}}\leq Cr \left(\frac{1}{|B(x,r)|}\int_{B(x,r)} |Xv|^p dx \right)^{\frac{1}{p}},$$
 for any $ v\in Lip(\overline{B(x,r)})$. Here $v_B=\frac{1}{|B(x,r)|}\int_{B(x,r)} |v|dx,$ and $\gamma$ is a constant such that for any balls $I\subset J\subset B(x,r)$, $x\in K, r<r_0$, the following inequality holds, $$|J|\leq C\left(\frac{r(J)}{r(J)}\right)^{n\gamma}|I|.$$
\end{lemma}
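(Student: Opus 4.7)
The plan is to follow Jerison's original strategy, in which the proof reduces to a Riesz-type potential estimate on the Carnot-Carathéodory space $(B(x,r), d, dx)$. First, I would exploit Chow-Rashevsky (available under Hörmander's condition) to join any two points $y, z \in B(x,r)$ by a subunit curve $\gamma_{y,z}$ whose length is comparable to $d(y,z) \leq 2r$, and write
\begin{equation*}
v(y) - v(z) = \sum_{i=1}^{m} \int_0^1 a_i(t)(X_i v)(\gamma_{y,z}(t))\, dt.
\end{equation*}
Averaging in $z$ over $B(x,r)$ yields a representation of $v(y) - v_B$ as a weighted integral of $Xv$ along a family of subunit paths. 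This reduces the oscillation of $v$ to its horizontal gradient.

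The delicate step is to rewrite this integral as an ordinary spatial integral over $B(x, Cr)$ with a controllable kernel. Using a family $\{\gamma_{y,z}\}$ that admits good Jacobian bounds—obtained from the Nagel-Stein-Wainger ball-box structure and the volume polynomial $\Lambda(x, r)$—a Fubini argument gives the pointwise bound
\begin{equation*}
|v(y) - v_B| \leq C \int_{B(x, Cr)} \frac{d(y, w)}{|B(y, d(y, w))|}\, |Xv(w)|\, dw,
\end{equation*}
which is a fractional integral of order $1$ relative to the metric $d$. This representation is the crux of the argument.

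With this in hand, the local doubling property of Lemma \ref{doubling} (with exponent $n\gamma$ in the sense of the chord condition $|J| \leq C(r(J)/r(I))^{n\gamma} |I|$) turns $(B,d,dx)$ into a space of homogeneous type with a quantitative dimension. A Hedberg-type truncation combining the fractional integral with the Hardy-Littlewood maximal function on this space yields the $L^p \to L^q$ estimate for $1/q = 1/p - 1/(n\gamma)$, with the $r$-scaling dictated by the order-$1$ kernel. Together with the pointwise representation, this produces the stated Sobolev-Poincaré inequality. The main obstacle is the construction of the family of subunit curves with explicit Jacobian control: this is the heart of Jerison's work and relies essentially on the Nagel-Stein-Wainger ball structure, itself a deep consequence of the Hörmander condition; once it is granted, the passage to the Sobolev improvement is standard harmonic analysis.
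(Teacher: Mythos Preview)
The paper does not give its own proof of this lemma: it is simply quoted from the literature, with the citation \cite{J} pointing to Franchi--Lu--Wheeden (Ann.\ Inst.\ Fourier \textbf{45} (1995), 577--604), Theorem~1, rather than to Jerison's paper \cite{DJ}. So there is no in-paper argument to compare against.

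Your sketch is nonetheless a faithful outline of the method actually used in \cite{J} (and in Jerison's earlier work): a pointwise representation formula of the type
\[
|v(y)-v_B|\le C\int_{B(x,Cr)}\frac{d(y,w)}{|B(y,d(y,w))|}\,|Xv(w)|\,dw,
\]
followed by the $L^p\to L^q$ mapping property of this order-one fractional integral on the doubling metric measure space $(B,d,dx)$, with the homogeneous dimension $n\gamma$ supplied by the chord condition. One clarification: in Franchi--Lu--Wheeden the representation formula is not obtained by the path-integral-plus-Jacobian argument you describe (that is closer in spirit to Jerison's original proof), but rather by iterating the weak $L^1$ Poincar\'e inequality of Jerison on a Whitney-type chain of subballs and summing a geometric series; the resulting kernel is the same. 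Either route leads to the stated inequality, and both rely ultimately on the Nagel--Stein--Wainger structure theorem for the balls, exactly as you identify.
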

\begin{Rem}\label{remark5.1}
By simple computation, we get
\begin{equation}\label{eq:2}\int_{B(x,r)} |v-v_B| dx \leq Cr \int_{B(x,r)} |Xv| dx \end{equation}
for any $v\in C^{\infty}(\overline{B(x,r)})$.
By approximation, \eqref{eq:2} also holds for any $v \in \mathnormal{W}^{1,1}_{X}(B(x,2r)).$
\end{Rem}
\begin{lemma}[(John-Nirenberg)\cite{JN}, Theorem 1 ]\label{jn} Let X be a sapce of homogeneous type and $f \in \mathcal{L}_{\varphi,1}(X)$ satisfying $||f||_{\mathcal{L}_{\varphi,1}(X)} \neq 0.$ Then  there exist positive constants $C_1$ and $C_2,$ which are independent of $f,$ such that for all balls $B\subset X$ and $\alpha >0,$ if $ \varphi \in \mathbb{A}_1 (X),$  then
$$\varphi\left(\{{x\in B:{\frac{|f(x)-f_B|}{\varphi(x,||\chi_B||_{L^{\varphi}}^{-1})}}>\alpha \}},||\chi_B||_{L^{\varphi}}^{-1}\right)\leq C_1 \exp \left(-\frac{C_2 \alpha}{||\chi_B||_{L^{\varphi}}||f||_{\mathcal{L}_{\varphi,1}(X)}}\right) .$$
\end{lemma}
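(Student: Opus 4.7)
The plan is to adapt the classical Calder\'on--Zygmund stopping time argument that yields the John--Nirenberg exponential bound for BMO on $\mathbb{R}^n$ to the present weighted setting on a space of homogeneous type. After normalizing $\|f\|_{\mathcal{L}_{\varphi,1}(X)}=1$, the target reduces to exhibiting $\theta\in(0,1)$ and $\lambda_0>0$, depending only on the doubling and $\mathbb{A}_1$ constants, such that for every integer $k\geq 0$ the $\varphi$-measure of the super-level set at height $(k+1)\lambda_0$ is at most $\theta$ times the $\varphi$-measure of the corresponding set at height $k\lambda_0$. The exponential bound then follows with $C_2 = -\log\theta/\lambda_0$ and a suitable $C_1$ obtained by interpolation in the parameter $\alpha$.

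The first main step is a Whitney/stopping-time decomposition at height $\lambda_0$ of the set
$$
E_{\lambda_0}=\bigl\{x\in B:|f(x)-f_B|/\varphi(x,\|\chi_B\|_{L^\varphi}^{-1})>\lambda_0\bigr\}.
$$
Using the Vitali covering property available on spaces of homogeneous type, together with the $\mathbb{A}_1$ condition to convert the Campanato-type control of oscillations into a weighted-average bound, one extracts a maximal disjoint family $\{B_{1,j}\}$ of stopping balls on which the weighted average of $|f-f_B|$ first exceeds $\lambda_0$, while on any controlled ancestor it stays below. Summing the $\varphi$-masses and invoking the definition of $\|f\|_{\mathcal{L}_{\varphi,1}(X)}$ produces
$$
\sum_j \varphi\bigl(B_{1,j},\|\chi_B\|_{L^\varphi}^{-1}\bigr)\leq \theta\,\varphi\bigl(B,\|\chi_B\|_{L^\varphi}^{-1}\bigr)
$$
with $\theta<1$ as soon as $\lambda_0$ is chosen large compared to the doubling and $\mathbb{A}_1$ constants.

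The second step is the iteration. On each stopping ball $B_{1,j}$ the $\mathbb{A}_1$ condition together with doubling of the Luxemburg norm yield $|f_{B_{1,j}}-f_B|\lesssim \lambda_0\,\varphi(x,\|\chi_B\|_{L^\varphi}^{-1})$, so that membership in the level set at height $(k+1)\lambda_0$ on $B$ forces membership in the analogous level set at height $k\lambda_0$ on one of the $B_{1,j}$, now parametrized by $\|\chi_{B_{1,j}}\|_{L^\varphi}^{-1}$. Induction on $k$, combined with the geometric decay of the total $\varphi$-mass produced in the first step, then delivers the claimed exponential bound.

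The main obstacle is the weight-transfer bookkeeping: the level set involves $\varphi(x,\|\chi_B\|_{L^\varphi}^{-1})$, a quantity tied to the mother ball, while the natural parameter of each stopping ball $B_{1,j}$ is $\|\chi_{B_{1,j}}\|_{L^\varphi}^{-1}$, and the induction on $k$ requires passing between these two parametrizations without deteriorating the constants. Controlling this transfer uniformly uses the full strength of the $\mathbb{A}_1$ hypothesis together with the doubling behavior of Luxemburg norms on homogeneous spaces, and constitutes the technical core of the argument carried out in \cite{JN}, which is the reason the statement is quoted here rather than reproved; the present excerpt simply invokes it to feed the subsequent Harnack and H\"older arguments.
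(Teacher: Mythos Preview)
The paper does not prove this lemma at all; it is stated as a citation of Theorem~1 in \cite{JN} and used as a black box in the proof of Theorem~\ref{Thm5.1}. Your proposal correctly recognizes this in its final paragraph, and the sketch you give of the Calder\'on--Zygmund stopping-time argument is a reasonable indication of how the cited result is established in \cite{JN}, but no comparison with the paper's own proof is possible since none is offered.
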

\begin{Rem}
Particularly, by computing directly, we can verify that when $\varphi(x,t)=t$ for all $x\in X$ and $t\in [0,\infty), $ $ ||\chi_B||_{L^{\varphi}}=\mu (B).$ Thus, the above conclusion in this case is the classical John-Nirenberg inequality for sapces of homogeneous type
$$\mu\left(\{x\in B:|f(x)-f_B|>\alpha\}\right)\leq C_1 e^{\frac{-C_2\alpha}{||f||_{BMO(X)}}} \mu(B).$$
\end{Rem}

\begin{proof}[\textbf{proof of Theorem \ref{Thm5.1}}]
Let  $\bar u =u+\tilde k(R)$, $\beta \neq0$  and $$\tilde\phi=\eta^p \bar{u}^{\beta} e^{(sign\beta)c_0 \bar{u}},$$
where  $\eta\in C(B_{4R})\cap W_{X,0}^{1,q}(B_{4R})$ for any $q\in[1,\infty)$,   $0\leq\eta\leq1,$  $supp \eta\in (B_{4R}),$  and $|X \eta| \in L^{\infty}(B_{4R})$.
And we have
$$ X \tilde\phi=e^{(sign\beta)c_0 \bar{u}}(p\eta^{p-1} \bar{u}^{\beta} X\eta+ \beta \eta^p \bar{u}^{\beta-1}X{u}+(sign\beta)c_0\eta^p \bar{u}^{\beta}X{u}).$$
Note that $\tilde\phi \in\mathnormal{W}^{1,p}_{X,0}(B_{4R}),$ by approximation, \eqref{weso} also holds for any test function ${\tilde\phi}.$ Then we get
\begin{equation}\int_{B_{4R}} X {\tilde\phi} \cdot A(x,u,Xu) dx = \int_{B_{4R}} B(x,u,Xu){\tilde\phi} dx.\end{equation}
Thus we get that

$$\begin{aligned}&\int_{B_{4R}} \beta \eta^p \bar{u}^{\beta-1}X{u}+(sign\beta)c_0\eta^p \bar{u}^{\beta}X{u}\cdot A(x,u,Xu) dx
\\ \leq& \int_{B_{4R}} (\eta^p \bar{u}^{\beta}B(x,u,Xu)-p\eta^{p-1} \bar{u}^{\beta} X\eta \cdot A(x,u,Xu)) dx.\end{aligned}$$

Setting $\bar b(x)= b(x)+{\tilde k(R)}^{1-p}e(x)$ and $\bar d(x)=d(x)+{\tilde k(R)}^{1-p}f(x) +{\tilde k(R)}^{-p}g(x)$, we have
\begin{equation}\label{bd}
||\bar b(x)||_{B_{4R}}\leq ||b(x)||_{B_{4R}} +1,\;\;\;||\bar d(x)||_{B_{4R}}\leq ||d(x)||_{B_{4R}} +2|B_R|^{-\frac{\varepsilon}{2\tilde v}}.
\end{equation}
Combining \eqref{cond}, \eqref{cond_prime} and \eqref{thmesti1}, we obtain that
\begin{equation}\label{572}
 \begin{aligned}\int_{B_{4R}}   |\beta|\eta^p{\bar{u}}^{\beta-1}|X{u}|^p dx
\leq &\int_{B_{4R}} \bigg((1+|\beta|+c_0 \bar{u})\eta^p\bar{d}(x){\bar{u}}^{p+\beta-1}+ c(x)\eta^p{\bar{u}}^{\beta}|X{u}|^{p-1}\\
 & +pa\eta^{p-1}{\bar{u}}^{\beta} |X\eta||X{u}|^{p-1} dx+ p\eta^{p-1}\bar{b}(x){\bar{u}}^{p+\beta-1}|X\eta|\bigg).\\
\end{aligned}
\end{equation}
 Setting
\begin{equation}\label{573}
v(x) =\begin{cases}
\log \bar{u}(x)  \ \ \ & if \ \ \beta=1-p,\\
{\bar{u}(x)}^q   \ &if \ \beta \neq 1-p \ \  and \ \  pq=p+\beta-1,
\end{cases}
 \end{equation}
and  we  next analyze  in two cases.

Case\ 1: firstly,we consider the case $\beta=1-p .$ We can rewrite \eqref{572} as
\begin{equation}\label{581}
 \begin{aligned}\int_{B_{4R}}  (p-1)|\eta Xv|^p dx
\leq &\int_{B_{4R}}  \bigg((p+c_0 \bar{u})\bar{d}(x)\eta^p+ c(x)\eta|\eta Xv|^{p-1}\\
 & +pa|X\eta||\eta Xv|^{p-1}+ p\eta^{p-1}\bar{b}(x)|X\eta|\bigg) dx=I_1+I_2+I_3+I_4.\\
\end{aligned}
\end{equation}
For any $h \leq 2R,$ by Lemma \ref{cutoff}, we specialize $\eta$ by
$$\eta(x) =\begin{cases}
1  \  \ & if  \ \ x \in B_h,\\
0   \ \ &if   \ \ x \notin B_{2h},
\end{cases}$$
and  $|X\eta| \leq \frac{C}{h}$ a.e. in $B_{2h}$.

For $I_1,I_2,I_3$, when $p \leq \tilde{v},$ by \eqref{bd}, H\"older inequality and Lemma \ref{sobolev}  we have
\begin{equation}\label{eq:522}\begin{aligned}
I_1
\leq& C||\bar{d}(x)||_{L^{\frac{\tilde{v}}{p-\frac{\varepsilon}{2}}}(B_{2h})}||\eta||_{L^{\frac{\tilde{v}p}{\tilde{v}-p+\frac{\varepsilon}{2}}}(B_{2h})}^p
\leq C|B_{2h}|^{\frac{\varepsilon}{2p-\varepsilon }}||\bar{d}(x)||_{L^{\frac{\tilde{v}}{p-\varepsilon}}(B_{2h})} ||\eta||_{L^{p^{\ast}}(B_{2h})}^p\\
\leq& C|B_{2h}|^{\frac{\varepsilon}{2p-\varepsilon }}||\bar{d}(x)||_{L^{\frac{\tilde{v}}{p-\varepsilon}}(B_{2h})}||X\eta||_{L^p(B_{2h})}^p
\leq C h^{-p}|B_{2h}| |B_{2h}|^{\frac{\varepsilon}{2p-\varepsilon }}||\bar{d}(x)||_{L^{\frac{\tilde{v}}{p-\varepsilon}}(B_{2h})}\\
\leq& C h^{-p}|B_{2h}|\left(||d(x)||_{\frac{\tilde{v}}{p-\varepsilon}(B_{2h})}+1\right),\\
\end{aligned}\end{equation}
here when $ p<\tilde{v},p^{*}=\frac{\tilde{v}p}{\tilde{v}-p}$; when $p=\tilde{v},p^{*}=\frac{2\tilde{v}^2}{\varepsilon}$.
\begin{equation}\label{eq:532}\begin{aligned}
 I_2 \leq &C||c(x)||_{L^{\frac{\tilde{v}}{1-\varepsilon}}(B_{2h})} ||\eta||_{L^{\frac{\tilde{v}p}{\tilde{v}-p+p\varepsilon}}(B_{2h})} ||\eta Xv||_{L^p(B_{2h})}^{p-1}
\leq C||c(x)||_{L^{\frac{\tilde{v}}{1-\varepsilon}}(B_{2h})} ||\eta||_{L^{p^{\ast}}(B_{2h})} ||\eta Xv||_{L^p(B_{2h})}^{p-1}\\
\leq& C ||c(x)||_{L^{\frac{\tilde{v}}{1-\varepsilon}}(B_{2h})} ||X \eta||_{L^p(B_{2h})} ||\eta Xv||_{L^p(B_{2h})}^{p-1}
\leq C h^{-1}|B_{2h}|^{\frac{1}{p}}||c(x)||_{L^{\frac{\tilde{v}}{1-\varepsilon}}(B_{2h})}||\eta Xv||_{L^p(B_{2h})}^{p-1},\\
\end{aligned}\end{equation}
here when $ p<\tilde{v},p^{*}=\frac{\tilde{v}p}{\tilde{v}-p}$; when $p=\tilde{v},p^{*}=\frac{\tilde{v}}{\varepsilon}$.
\begin{equation}\label{eq:552}
 I_3 \leq ||\eta Xv||_{L^p(B_{2h})}^{p-1} ||X\eta||_{L^p(B_{2h})}  \leq C h^{-1}|B_{2h}|^{\frac{1}{p}}||\eta Xv||_{L^p(B_{2h})}^{p-1}.
\end{equation}

In particular, based on the different integrability of the function $b(x)$ in \eqref{func} we have\\
when $p<\tilde{v},$
\begin{equation}\label{eq:542}\begin{aligned}
I_4
\leq& C||\bar{b}(x)||_{L^{\frac{\tilde{v}}{p-1}}(B_{2h})}||\eta||_{L^{p^{\ast}}(B_{2h})}^{p-1}||X\eta||_{L^p(B_{2h})}
\leq C||\bar{b}(x)||_{L^{\frac{\tilde{v}}{p-1}}(B_{2h})}||X\eta||_{L^p(B_{2h})}^p
\\ \leq & C h^{-p}|B_{2h}|||\bar{b}(x)||_{L^{\frac{\tilde{v}}{p-1}}(B_{2h})}
\leq C h^{-p}|B_{2h}|(||b(x)||_{L^{\frac{\tilde{v}}{p-1}}(B_{2h})}+1),\\
\end{aligned}\end{equation}
with $p^{\ast}= \frac{p\tilde{v}}{\tilde{v}-p};$ when $p=\tilde{v},$
\begin{equation}\label{eq:5422}\begin{aligned}
I_4
\leq&C||\bar{b}(x)||_{L^{\frac{\tilde{v}}{{\tilde{v}}-1-\varepsilon}}(B_{2h})}||\eta||_{L^{\frac{\tilde{v}(\tilde{v}-1)}{\varepsilon}}(B_{2h})}^{\tilde{v}-1}
||X\eta||_{L^{\tilde{v}}(B_{2h})}
\leq C||\bar{b}(x)||_{L^{\frac{\tilde{v}}{{\tilde{v}}-1-\varepsilon}}(B_{2h})}||X\eta||_{L^{\tilde{v}}(B_{2h})}^{\tilde{v}}
\\ \leq & C h^{-\tilde{v}}|B_{2h}|||\bar{b}(x)||_{L^{\frac{\tilde{v}}{{\tilde{v}}-1-\varepsilon}}(B_{2h})}
\leq C h^{-\tilde{v}}|B_{2h}|(||b(x)||_{L^{\frac{\tilde{v}}{{\tilde{v}}-1-\varepsilon}}(B_{2h})}+1).\\
\end{aligned}\end{equation}

Inserting these estimates into inequality \eqref{581},we obtain that
\begin{equation}\label{582}
||\eta Xv||_{L^p(B_{2h})}^p \leq ||\eta Xv||_{L^p(B_{4R})}^p  \leq  C (h^{-1}|B_{2h}|^{\frac{1}{p}}||\eta Xv||_{L^p(B_{2h})}^{p-1}+h^{-p}|B_{2h}|).
\end{equation}
By Lemma \ref{inez} and Lemma \ref{doubling}, we conclude that $$||Xv||_{L^p(B_h)} \leq||\eta Xv||_{L^p(B_{2h})}^{p}\leq C h^{-1}|B_{2h}|^{\frac{1}{p}} \leq C h^{-1}|B_{h}|^{\frac{1}{p}},$$
here C depends on $n,p,\varepsilon ,a,c_0,R,||d||_{\Omega},||c||_{\Omega},||b||_{\Omega},||u||_{L^{\infty}(\Omega)}.$\\

Based on Remark \ref{remark5.1} and H\"older inequality, we have
$$\begin{aligned}
\frac{1}{|B_h|}\int_{B_h}|v-v_B| dx
\leq &C h \left(\frac{1}{|B_h|}\int_{B_h} |Xv|^pdx\right)^{\frac{1}{p}}
\leq C. \end{aligned}$$
Due to $v= \log \bar{u} $ and $(B_{2R},\mu,d)$ is a homogeneous space, by Lemma \ref{jn}, there exist constants $\gamma_0^{'}>0$ and $C>0$ so that
$$(\frac{1}{|B_{2R}|}\int_{B_{2R}} {\bar u}^{\gamma_0^{'}}dx)^{\frac{1}{\gamma_0^{'}}} \leq C (\frac{1}{|B_{2R}|}\int_{B_{2R}} {\bar u}^{-\gamma_0^{'}}dx)^{-\frac{1}{\gamma_0^{'}}}.$$
Let
\begin{equation}\label{phi}
\Phi(\alpha,h):=(\int_{B_h} {\bar{u}}^\alpha dx)^{\frac{1}{\alpha}},
\end{equation} where $\alpha \neq 0$ and $l>0.$ So we get
\begin{equation}\label{eq:590}
\Phi(\gamma_0^{'},2R)\leq C{|B_{2R}|^{\frac{2}{\gamma_0^{'}}}}\Phi(-\gamma_0^{'},2R).
\end{equation}

Case\ 2: next, we consider the another case $\beta \neq 1-p$ and rewrite \eqref{572} as
\begin{equation}\label{591}
 \begin{aligned} \int_{B_{4R}} |\beta| |\eta Xv|^p dx
\leq &\int_{B_{4R}}  \bigg(|q|^p(1+|\beta|+c_0 \bar{u})\bar{d}(x)|\eta v|^p+|q|c(x)\eta v|\eta Xv|^{p-1}\\
 & +pa|q||v X\eta||\eta Xv|^{p-1}+ p|q|^p \bar{b}(x)|\eta v|^{p-1}|v X\eta|\bigg) dx.\\
\end{aligned}
\end{equation}
  When $p < \tilde{v},$  applying H\"older inequality and Lemma \ref{sobolev} we obtain that
\begin{equation}\label{eq:5222}\begin{aligned}
\int_{B_{4R}}\bar{d}(x)|\eta v|^pdx
\leq& C||\bar{d}(x)||_{L^{\frac{\tilde{v}}{p-\frac{\varepsilon}{2}}}(B_{4R})}||\eta v||_{L^p(B_{4R})}^{\frac{\varepsilon}{2}}||\eta v||_{L^{p^{\ast}}(B_{4R})}^{p-\frac{\varepsilon}{2}}\\
\leq& C|B_{2h}|^{\frac{\varepsilon}{2\tilde{v}}}||\bar{d}(x)||_{L^{\frac{\tilde{v}}{p-\varepsilon}}(B_{4R})}||\eta v||_{L^p(B_{4R})}^{\frac{\varepsilon}{2}}\left(||v X\eta||_{L^p(B_{4R})}^{p-\frac{\varepsilon}{2}}+||\eta Xv||_{L^p(B_{4R})}^{p-\frac{\varepsilon}{2}}\right)\\
\leq& C\left(||d(x)||_{L^{\frac{\tilde{v}}{p-{\varepsilon}}}(B_{4R})}+1\right)||\eta v||_{L^p(B_{4R})}^{\frac{\varepsilon}{2}}\left(||v X\eta ||_{L^p(B_{4R})}^{p-\frac{\varepsilon}{2}}+||\eta Xv||_{L^p(B_{4R})}^{p-\frac{\varepsilon}{2}}\right),\\
\end{aligned}\end{equation}
\begin{equation}\label{eq:5322}\begin{aligned}
&\int_{B_{4R}}c(x)|\eta v||\eta Xv|^{p-1}dx\leq ||c(x)||_{L^{\frac{\tilde{v}}{1-\frac{\varepsilon}{2}}}(B_{4R})}||\eta Xv||_{L^p(B_{4R})}^{p-1}||\eta v||_{L^{\frac{\tilde{v}p}{\tilde{v}-p+\frac{p\varepsilon}{2}}}(B_{4R})}  \\
\leq& ||c(x)||_{L^{\frac{\tilde{v}}{1-\frac{\varepsilon}{2}}}(B_{4R})}||\eta v||_{L^p(B_{4R})}^{\frac{\varepsilon}{2}} ||\eta v||_{p^{\ast}(B_{4R})}^{1-\frac{\varepsilon}{2}}  ||\eta Xv||_{L^p(B_{4R})}^{p-1}\\
\leq& C||c(x)||_{L^{\frac{\tilde{v}}{1-\varepsilon}}(B_{4R})} ||\eta v||_{L^p(B_{4R})}^{\frac{\varepsilon}{2}} \left(||v X\eta ||_{L^p(B_{4R})}^{1-\frac{\varepsilon}{2}}||\eta Xv||_{L^p(B_{4R})}^{p-1}+||\eta Xv||_{L^p(B_{4R})}^{p-\frac{\varepsilon}{2}}\right),\\
\end{aligned}\end{equation}
\begin{equation}\label{eq:5522}\begin{aligned}
\int_{B_{4R}} |\eta Xv|^{p-1} |vX\eta |dx  \leq&  ||v X\eta||_{L^p(B_{4R})}||\eta Xv||_{L^p(B_{4R})}^{p-1} ,\\
\end{aligned}\end{equation}
\begin{equation}\label{eq:54222}\begin{aligned}
&\int_{B_{4R}}\bar{b}(x) |\eta v|^{p-1} |v X\eta| dx\leq C||b(x)||_{L^{\frac{\tilde{v}}{p-1}}(B_{4R})}\left(||v X\eta||_{L^p(B_{4R})}^p +||vX\eta||_{L^p(B_{4R})} ||\eta Xv||_{L^p(B_{4R})}^{p-1}\right)
\\& \leq C\left(||b(x)||_{L^{\frac{\tilde{v}}{p-1}}(B_{4R})}+1\right)\left(||v X\eta||_{L^p(B_{4R})}^p +||vX\eta||_{L^p(B_{4R})} ||\eta Xv||_{L^p(B_{4R})}^{p-1}\right),\\
\end{aligned}\end{equation}
where $p^{\ast}=\frac{p\tilde{v}}{\tilde{v}-p}.$
Inserting these estimates into inequality \eqref{591}, we have
\begin{equation}\label{eq:122}
\begin{aligned}
|\beta|||\eta Xv||_{L^p(B_{4R})}^p \leq C\bigg(&|q|^p(1+|\beta|)\left(||\eta v||_{L^p(B_{4R})}^{\frac{\varepsilon}{2}}\left(||vX\eta ||_{L^p(B_{4R})}^{p-\frac{\varepsilon}{2}}+||\eta Xv||_{L^p(B_{4R})}^{p-\frac{\varepsilon}{2}}\right)\right)\\
&+|q|||\eta v||_{L^p(B_{4R})}^{\frac{\varepsilon}{2}}\left(|vX\eta ||_{L^p(B_{4R})}^{1-\frac{\varepsilon}{2}}||\eta Xv||_{L^p(B_{4R})}^{p-1}+||\eta Xv||_{L^p(B_{4R})}^{p-\frac{\varepsilon}{2}}\right)\\
&+|q||| vX\eta||_{L^p(B_{4R})} ||\eta Xv||_{L^p(B_{4R})}^{p-1}\\
&+|q|^p\left(|vX\eta ||_{L^p(B_{4R})}^p+||vX\eta ||_{L^p(B_{4R})} ||\eta Xv||_{L^p(B_{4R})}^{p-1}\right)\bigg),
\end{aligned}\end{equation}
here C depends on $n,p,\varepsilon ,a,c_0,R,||d||_{\Omega},||c||_{\Omega},||b||_{\Omega},||u||_{L^{\infty}(\Omega)}.$

We set
$$z=\frac{||\eta Xv||_{L^p(B_{4R})}}{||vX\eta ||_{L^p(B_{4R})}},\quad \xi=\frac{||\eta v||_{L^p(B_{4R})}}{||vX\eta ||_{L^p(B_{4R})}},$$
then we get
$$\begin{aligned}
|z|^p \leq & C\bigg(z^{p-\frac{\varepsilon}{2}}\xi^{\frac{\varepsilon}{2}}\left(1+{|\beta|}^{-1}\right)\left(|q|+|q|^p\right)+z^{p-1}\left(1+{|\beta|}^{-1}\right)\left(|q|\xi^{\frac{\varepsilon}{2}}+|q|+|q|^p\right)\\
&+\left(1+{|\beta|}^{-1}\right)\left(|q|^p\xi^{\frac{\varepsilon}{2}}+|q|^p\right)\bigg).
\end{aligned}$$
By  Lemma \ref{inez}, we can get the estimate $z\leq C{(1+{|\beta|}^{-1})}^{\frac{2}{\varepsilon} }(1+|q|^{\frac{2p}{\varepsilon} })(1+\xi),$ that is
$$||\eta Xv||_{L^p(B_{4R})}\leq C{\left(1+{|\beta|}^{-1}\right)}^{\frac{2}{\varepsilon}}\left(1+|q|^{\frac{2p}{\varepsilon}}\right)\left(||vX\eta ||_{L^p(B_{4R})}+||\eta v||_{L^p(B_{4R})}\right). $$

Setting $h_{\nu}=R(1+2^{-\nu}),$ for $\nu=0,1,2,...$, by Lemma \ref{cutoff}, we choose $\eta$ so that for any $R\leq h_{\nu+1}<h_{\nu}\leq 2R,$
$$\eta(x) =\begin{cases}
1  \  \ & if  \ \ x \in B_{h_{\nu+1}},\\
0   \ \ &if   \ \ x \notin B_{h_{\nu}},
\end{cases}$$
and we have $|X\eta| \leq \frac{2^{\nu+1}C}{R}.$ Applying Lemma \ref{sobolev} we obtain that
\begin{equation}\label{eq:55}\begin{aligned}
||v||_{L^{p^{\ast}}(B_{h_{\nu+1}})}\leq& ||\eta v||_{L^{p^{\ast}}(B_{4R})} \leq  ||\eta Xv||_{L^p(B_{4R})}+|| v X\eta||_{L^p(B_{4R})} \\ \leq& C{\left(1+{|\beta|}^{-1}\right)}^{\frac{2}{\varepsilon}}\left(1+|q|^{\frac{2p}{\varepsilon}}\right)(2^{\nu+1}R^{-1}) ||v||_{L^p(B_{h_{\nu}})}.
\end{aligned}\end{equation}
Then take the $q$ th root of two sides and set $\gamma=pq=p+\beta-1$ and $\chi=\frac{\tilde{v}}{{\tilde{v}}-p}.$ Combining $v={\bar{u}}^q$ and \eqref{phi} we obtain that
\begin{equation}\label{eq:56}
\Phi(\chi \gamma,h_{\nu+1})\leq\left( C{\left(1+{|\beta|}^{-1}\right)}^{\frac{2}{\varepsilon}}\left(1+\gamma^{\frac{2p}{\varepsilon}}\right)\left(2^{\nu+1}R^{-1}\right)\right)^{\frac{p}{\gamma}} \Phi(\gamma,h_{\nu}), \quad  q>0,
\end{equation}
\begin{equation}\label{eq:57}
\Phi(\chi \gamma,h_{\nu+1})\geq \left( C{\left(1+{|\beta|}^{-1}\right)}^{\frac{2}{\varepsilon}}\left(1+|\gamma|^{\frac{2p}{\varepsilon}}\right)\left(2^{\nu+1}R^{-1}\right)\right)^{\frac{p}{\gamma}} \Phi(\gamma,h_{\nu}), \quad  q<0.
\end{equation}

The following we will iterate the inequalities \eqref{eq:56} by setting that $\gamma_{\nu}=\chi^{\nu}\gamma_0$ for $\nu=0,1,2,...,$  and  we choose $0<\gamma_0 <\gamma_0^{'}$ such that $\gamma=p-1$ in which case $\beta =0 $ lies midway between two iterates $\gamma_{\nu}$ and $\gamma_{\nu+1},$ which guarantees an estimate $$|\beta|=|\gamma-(p-1)|\geq \frac{p(p-1)}{2\tilde {v}-p}.$$
Then for all $\nu,$ ${(1+{|\beta|}^{-1})}^{\frac{2}{\varepsilon}} \leq C,$ where $C$ only depends on $n,p,\varepsilon.$  Hence from \eqref{eq:56} we have
\begin{equation}\label{eq:59}
\begin{aligned}
\Phi(\gamma_{\nu+1},h_{\nu+1})
\leq &[ CR^{-1}{(1+\gamma_{\nu}^{\frac{2p}{\varepsilon}})2^{\nu+1}}]^{\frac{p}{\gamma_{\nu}}} \Phi(\gamma_{\nu},h_{\nu})
=[ CR^{-1}{(1+\chi^{\frac{2p\nu}{\varepsilon}}\gamma_0^{\frac{2p}{\varepsilon}})2^{\nu+1}]^{ p\chi^{-\nu}\gamma_0^{-1}}} \Phi(\gamma_{\nu},h_{\nu})\\
\leq &(CR^{-1})^{{\frac{p}{\gamma_0}}\sum \limits_{\nu=0}^{\infty}\chi^{-\nu}}2^{\frac{p}{\gamma_0}\sum\limits_{\nu=0}^{\infty}(\chi^{-\nu}+\nu\chi^{-\nu})}
(1+\chi^{\frac{2p^2}{\varepsilon \gamma_0}\sum\limits_{\nu=0}^{\infty}\nu \chi^{-\nu}}\gamma_0^{\frac{2p^2}{\varepsilon \gamma_0}\sum\limits_{\nu=0}^{\infty}\chi^{-\nu}})\Phi(\gamma_0,2R)\\
\leq &C(R^{-1})^{\frac{\tilde{v}}{\gamma_0}} \Phi(\gamma_0,2R).
\end{aligned}
\end{equation}

The above inequality holds based on $\chi=\frac{\tilde{v}}{{\tilde{v}}-p}>1.$  As $\nu\rightarrow \infty,$  by \eqref{eq:59} we conclude that
\begin{equation}\label{eq:591}
 \sup\limits_{B_R}\bar{u}=\Phi(\infty,R)\leq C(R^{-1})^{\frac{\tilde{v}}{\gamma_0}} \Phi(\gamma_0,2R) .
\end{equation}
Similarly,  by iterating \eqref{eq:57} as before, with the only modification that $\gamma_{\nu}=-\chi^{\nu}\gamma_0,$ we yield the result
\begin{equation}\label{eq:592}
 \inf\limits_{B_R}\bar{u}=\Phi(-\infty,R)\geq C^{-1}R^{\frac{\tilde{v}}{\gamma_0}} \Phi(-\gamma_0,2R) .
\end{equation}

Finally by \eqref{eq:590}, \eqref{eq:591}, \eqref{eq:592} and  H\"older inequality, we have
\begin{equation}\label{CR}
\begin{aligned}
\sup\limits_{B_R}\bar{u}\leq &C(R^{-1})^{\frac{\tilde{v}}{\gamma_0}} \Phi(\gamma_0,2R)
\leq C{|B_{2R}|^{\frac{\gamma_0^{'}-\gamma_0}{\gamma_0^{'}\gamma_0}}}(R^{-1})^{\frac{\tilde{v}}{\gamma_0}} \Phi(\gamma_0^{'},2R)\\
\leq &C{|B_{2R}|^{\frac{\gamma_0^{'}+\gamma_0}{\gamma_0^{'}\gamma_0}}} (R^{-1})^{\frac{\tilde{v}}{\gamma_0}}\Phi(-\gamma_0^{'},2R)
\leq C{|B_{2R}|^{\frac{2}{\gamma_0}}} (R^{-1})^{\frac{\tilde{v}}{\gamma_0}} \Phi(-\gamma_0,2R)\\
\leq &C{|B_{2R}|^{\frac{2}{\gamma_0}}} (R^{-1})^{\frac{2\tilde{v}}{\gamma_0}} \inf\limits_{B_R}\bar{u}
= C({\frac{|B_{2R}|}{R^{\tilde{v}}}})^{\frac{2}{\gamma_0}}\inf\limits_{B_R}\bar{u}=C\inf\limits_{B_R}\bar{u}.
\end{aligned}\end{equation}
Due to $\bar{u}=u+\tilde k(R),$ we conclude that $$\sup\limits_{B_R} u\leq C(\inf\limits_{B_R} u+\tilde k(R)).$$

When $p = \tilde{v},$ Inequalities \eqref{eq:5222}, \eqref{eq:5322} and \eqref{eq:5422}  are transformed into the following three expressions in sequence.
\begin{equation}\label{eq:52122}\begin{aligned}
\int_{B_{4R}}\bar{d}(x)|\eta v|^{\tilde{v}}dx
\leq& C||\bar{d}(x)||_{L^{\frac{\tilde{v}}{\tilde{v}-\frac{\varepsilon}{2}}}(B_{4R})}||\eta v||_{L^{\tilde{v}}(B_{4R})}^{\frac{\varepsilon}{4}}||\eta v||_{L^{\frac{4\tilde{v}^2-\tilde{v}\varepsilon}{\varepsilon}}(B_{4R})}^{{\tilde{v}}-\frac{\varepsilon}{4}}\\
\leq& C|B_{2h}|^{\frac{\varepsilon}{2\tilde{v}}}||\bar{d}(x)||_{L^{\frac{\tilde{v}}{{\tilde{v}}-\varepsilon}}(B_{4R})}||\eta v||_{L^{\tilde{v}}(B_{4R})}^{\frac{\varepsilon}{4}}(||v X\eta||_{L^{\tilde{v}}(B_{4R})}^{{\tilde{v}}-\frac{\varepsilon}{4}}+||\eta Xv||_{L^{\tilde{v}}(B_{4R})}^{{\tilde{v}}-\frac{\varepsilon}{4}})\\
\leq& C(||d(x)||_{L^{\frac{\tilde{v}}{{\tilde{v}}-{\varepsilon}}}(B_{4R})}+1)||\eta v||_{L^{\tilde{v}}(B_{4R})}^{\frac{\varepsilon}{4}}(||v X\eta ||_{L^{\tilde{v}}(B_{4R})}^{{\tilde{v}}-\frac{\varepsilon}{4}}+||\eta Xv||_{L^{\tilde{v}}(B_{4R})}^{{\tilde{v}}-\frac{\varepsilon}{4}}),\\
\end{aligned}\end{equation}
\begin{equation}\label{eq:53122}\begin{aligned}
&\int_{B_{4R}}c(x)|\eta v||\eta Xv|^{\tilde{v}-1}dx\leq ||c(x)||_{L^{\frac{\tilde{v}}{1-\frac{\varepsilon}{2}}}(B_{4R})}||\eta Xv||_{L^{\tilde{v}}(B_{4R})}^{{\tilde{v}}-1}||\eta v||_{L^{\frac{2\tilde{v}}{\varepsilon}}(B_{4R})}  \\
\leq& ||c(x)||_{L^{\frac{\tilde{v}}{1-\frac{\varepsilon}{2}}}(B_{4R})}||\eta v||_{L^{\tilde{v}}(B_{4R})}^{\frac{\varepsilon}{4}} ||\eta v||_{L^{\frac{\tilde{v}(4-\varepsilon)}{\varepsilon}}(B_{4R})}^{1-\frac{\varepsilon}{4}}  ||\eta Xv||_{L^{\tilde{v}}(B_{4R})}^{{\tilde{v}}-1}\\
\leq& C||c(x)||_{L^{\frac{\tilde{v}}{1-\varepsilon}}(B_{4R})} ||\eta v||_{L^{\tilde{v}}(B_{4R})}^{\frac{\varepsilon}{4}} (||v X\eta ||_{L^{\tilde{v}}(B_{4R})}^{1-\frac{\varepsilon}{4}}||\eta Xv||_{L^{\tilde{v}}(B_{4R})}^{\tilde{v}-1}+||\eta Xv||_{L^{\tilde{v}}(B_{4R})}^{{\tilde{v}}-\frac{\varepsilon}{4}}),\\
\end{aligned}\end{equation}
\begin{equation}\label{eq:54122}\begin{aligned}
&\int_{B_{4R}} \bar{b}(x) |\eta v|^{{\tilde{v}}-1} |v X\eta| dx
\leq C(||b(x)||_{L^{\frac{\tilde{v}}{{\tilde{v}}-1-\varepsilon}}(B_{4R})}+1)(||v X\eta||_{L^{\tilde{v}}(B_{4R})}^{\tilde{v}}+||v X\eta ||_{L^{\tilde{v}}(B_{4R})}||\eta Xv||_{L^{\tilde{v}}(B_{4R})}^{{\tilde{v}}-1}).\\
\end{aligned}\end{equation}
Then  \eqref{eq:122} holds with
$ \frac{\varepsilon}{4}$
substituting for $ \frac{\varepsilon}{2}$.
\end{proof}

\begin{proof}[\textbf{Proof of Theorem \ref{Thmholderctn1}}]
For any $x \in \Omega,$  there exists a $R_0$ such that $B(x,{4R_0}) \subset \subset \Omega,$ $R_0\leq \frac{\rho_{\Omega}}{4},$ and
Theorem  \ref{Thm5.1} holds. For any $r\leq R_0$  set
$$M(4r)=\sup\limits_{B_{4r}} u,\ m(4r)=\inf\limits_{B_{4r}} u, \ \omega(4r)= M(4r)-m(4r),$$
$$v_1=M(4r)-u ,\ v_2=u-m(4r).$$
It is clear that $v_1,v_2$ both are negative and bounded in $B_{4r}$ and satisfy the equation respectively as follow
$$-div_{X} A_i(x,v_i, Xv_i)=B_i(x,v_i,Xv_i).$$
Meanwhile the similar structure conditions still hold with
$$b_i(x)=C(p) b(x) , \ e_i(x)= C(p) b (x)||u ||_{L^{\infty}(\Omega)}^{p-1}+e(x),\ d_i(x)=C(p)d(x), $$
$$g_i(x)=C(p) d(x) ||u ||_{L^{\infty}(\Omega)}^{p}+g(x) ,\ f_i(x)= C(p) d(x) ||u ||_{L^{\infty}(\Omega)}^{p-1}+f(x).$$

 Apply the Theorem \ref{Thm5.1} to $v_1$ and $v_2$ in the open ball $B_{4r}$ with \begin{equation}\label{eq:29}\tilde k(r)=(|B_{r}|^{\frac{\varepsilon}{2\tilde v}}||e||_{B_{4r}}+|B_{r}|^{\frac{\varepsilon}{2\tilde v}}||f||_{B_{4r}})^{\frac{1}{p-1}}+(|B_{r}|^{\frac{\varepsilon}{2\tilde v}}||g||_{B_{4r}})^{\frac{1}{p}},\end{equation}
and then \eqref{bd}, \eqref{eq:542}, \eqref{eq:5422}, \eqref{eq:54222} and \eqref{eq:54122} in the proof of  Theorem \ref{Thm5.1} turn into
$$
||\bar b_i(x)||_{{B_{4r}}}\leq ||b_i(x)||_{{B_{4r}}} +|B_{r}|^{-\frac{\varepsilon}{2\tilde v}},\;\;\;||\bar d_i(x)||_{B_{4r}}\leq ||d_i(x)||_{B_{4r}} +2|B_{r}|^{-\frac{\varepsilon}{2\tilde v}},
$$
$$\begin{aligned}
\int_{B_{4r}} \bar{b}_i(x)\eta^{p-1}|X\eta| dx\leq & C||\bar{b}_i(x)||_{L^\frac{\tilde{v}}{p-1-\frac{\varepsilon}{2}}(B_{2h})}|| \eta||^{p-1}_{L^\frac{p(p-1)\tilde{v}}{p(\tilde{v}-p+1+\frac{\varepsilon}{2})}(B_{2h})}||X\eta||_{L^p(B_{2h})} \\ \leq & C||\bar{b}_i(x)||_{L^\frac{\tilde{v}}{p-1-\frac{\varepsilon}{2}}(B_{2h})}||X\eta||_{L^p(B_{2h})}^p
\leq C|B_{2h}|^{\frac{\varepsilon}{2\tilde v}}||\bar{b}_i(x)||_{L^\frac{\tilde{v}}{p-1-{\varepsilon}}(B_{2h})}||X\eta||_{L^p(B_{2h})}^p\\
\leq&C h^{-p}|B_{2h}|(||b_i(x)||_{L^\frac{\tilde{v}}{p-1-\varepsilon}(B_{2h})}+1),\\
\end{aligned}$$

$$\begin{aligned}
\int_{B_{4r}} \bar{b}_i(x)\eta^{\tilde{v}-1}|X\eta| dx
\leq&C||\bar{b}_i(x)||_{L^{\frac{\tilde{v}}{{\tilde{v}}-1-\frac{\varepsilon}{2}}}(B_{2h})}||\eta||_{L^{\frac{2\tilde{v}(\tilde{v}-1)}{\varepsilon}}(B_{2h})}^{\tilde{v}-1}
||X\eta||_{L^{\tilde{v}}(B_{2h})}
\\ \leq& C|B_{2h}|^{\frac{\varepsilon}{2\tilde v}}||\bar{b}_i(x)||_{L^{\frac{\tilde{v}}{{\tilde{v}}-1-\varepsilon}}(B_{2h})}||X\eta||_{L^{\tilde{v}}(B_{2h})}^{\tilde{v}}
\\ \leq & C h^{-\tilde{v}}|B_{2h}|^{1+\frac{\epsilon}{2\tilde{v}}}||\bar{b}_i(x)||_{L^{\frac{\tilde{v}}{{\tilde{v}}-1-\varepsilon}}(B_{2h})}
\leq C h^{-\tilde{v}}|B_{2h}|(||b_i(x)||_{L^{\frac{\tilde{v}}{{\tilde{v}}-1-\varepsilon}}(B_{2h})}+1).\\
\end{aligned}$$

$$\begin{aligned}
\int_{B_{4r}} \bar{b}_i(x) |\eta v|^{p-1} |vX\eta | dx \leq &||\bar{b}_i(x)||_{L^\frac{\tilde{v}}{p-1-\frac{\varepsilon}{2}}(B_{4r})}||\eta v||^{p-1}_{L^\frac{p(p-1)\tilde{v}}{p(\tilde{v}-p+1+\frac{\varepsilon}{2})}(B_{4r})}||vX\eta ||_{L^p(B_{4r})}^p
\\ \leq & ||\bar{b}_i(x)||_{L^\frac{\tilde{v}}{p-1-\frac{\varepsilon}{2}}(B_{4r})}(||vX\eta ||_{L^p(B_{4r})}^p +||vX\eta ||_{L^p(B_{4r})} ||\eta Xv||_{L^p(B_{4r})}^{p-1})\\
\leq& C(||b_i(x)||_{L^\frac{\tilde{v}}{p-1-\varepsilon}(B_{4r})}+1)(||vX\eta ||_{L^p(B_{4r})}^p +||vX\eta ||_{L^p(B_{4r})} ||\eta Xv||_{L^p(B_{4r})}^{p-1}),\\
\end{aligned}$$
$$\begin{aligned}
\int_{B_{4r}} \bar{b}_i(x)|\eta v|^{{\tilde{v}}-1} |v X\eta| dx \leq & C||\bar{b}_i(x)||_{L^{\frac{\tilde{v}}{{\tilde{v}}-1-\frac{\varepsilon}{2}}}(B_{4r})}||\eta v||_{L^{\frac{2\tilde{v}(\tilde{v}-1)}{\varepsilon}}(B_{4r})}^{\tilde{v}-1}
||vX\eta||_{L^{\tilde{v}}(B_{4r})}\\
\leq & C(||b_i(x)||_{L^{\frac{\tilde{v}}{{\tilde{v}}-1-\varepsilon}}(B_{4r})}+1)(||v X\eta||_{L^{\tilde{v}}(B_{4r})}^{\tilde{v}}+||v X\eta ||_{L^{\tilde{v}}(B_{4r})} ||\eta Xv||_{L^{\tilde{v}}(B_{4r})}^{{\tilde{v}}-1}).\\
\end{aligned}$$
which will not affect the derivation of the conclusion. And since $\Omega$ is equiregular, \eqref{CR} can be corrected to
$$\sup\limits_{B_{r}}\bar{u}\leq  C\left({\frac{|B_{2r}|}{r^{\tilde{v}}}}\right)^{\frac{2}{\gamma_0}}\inf\limits_{B_{r}}\bar{u}
=C\inf\limits_{B_{r}}\bar{u},$$
where the constant $C$ is independent of $r.$
So we obtain that
\begin{equation}\label{eq:593}
M(4r)-m(r)\leq C\left(M(4r)-M(r)+\tilde k(r)\right),
\end{equation}
\begin{equation}\label{eq:594}
M(r)-m(4r)\leq C\left(m(r)-m(4r)+\tilde k(r)\right).
\end{equation}
For convenience, we denote $$k_0:= \left(||e_i||_{\Omega}+||f_i||_{\Omega}\right)^{\frac{1}{p-1}}+||g_i||_{\Omega}^{\frac{1}{p}}.$$
By \eqref{eq:29} we note that
$$\tilde k(r)\leq |B_{r}|^{\tilde \rho}k_0 \leq C k_0 r^{\tilde{v}\tilde \rho},$$
here $\tilde \rho={\frac{\varepsilon}{2p}}$ when $ |B_{r}|\leq 1;$ $\tilde \rho={\frac{\varepsilon}{2(p-1)}}$ when $ |B_{r}|\geq 1.$

Adding \eqref{eq:593} and\eqref{eq:594}  we yield that
\begin{equation}\label{eq:595}
 \omega(r)\leq {\frac{C-1}{C+1}}\omega{(4r)}+{\frac{2Ck_0}{C+1}r^{Q \tilde \rho}}.
\end{equation}
Applying Lemma \ref{lemma3.7}, we get that there exist $C>0,$ $\alpha \in (0,1)$ such that
$$\omega(4r)\leq C(|| u||_{L^{\infty}(\Omega)}+k_0)({4r})^{\alpha},$$
that is $$[u]_{\alpha,B_{4r}}\leq C(|| u||_{L^{\infty}(\Omega)}+k_0).$$

Finally for any compact subset $\Omega_0 \subset \Omega$, by finite covering theorem we may complete the conclusion.
\end{proof}

\section{Application}
In this section, we give applications to   higher step Grushin type operator.
For the Grushin type opterator in $\mathbb{R}^n$ as \eqref{eq:34},
 we consider the equation \begin{equation}\label{eq:27}P_{k'}(u)=f(x,z). \end{equation}
Since \eqref{eq:35}, and
then  \eqref{eq:27} implies
  \begin{equation}\label{eq:28}  \sum_{i=1}^{n}X^{*}_{i}(X_iu)=f(x,z).\end{equation}


\begin{proof}[\textbf{proof of Corollary \ref{thm4.1}}]
It's obviously that \eqref{eq:28} satisfies the condition in Theorem \ref{Thmremovsing} with $$p=2, \ \theta=1, \ a=1,$$ $$b(x,z)=0,\ c(x,z)=0,\   e(x,z)=0, \ d(x,z)=0, \ g(x,z)=0.$$
Then through Theorem \ref{Thmremovsing}, we conclude.
\end{proof}

\begin{proof}[\textbf{proof of Theorem \ref{thm4.2}}]
Through calculation, we get $cap_{h+l(k'+1)}(\{(0,z')\})=0$. Therefore, we only need to verify $u\in L^{\frac{h+l(k'+1)}{h+l(k'+1)-2}(1+\delta)}_{loc}( \mathbb{R}^n)$ and apply Theorem \ref{thm4.1} to conclude.
Since $h\geq 2$, $l\geq 1$ and $k'\geq 0$,
 when $|(\zeta, \varsigma)|$ is enough small we have
$$\begin{aligned}
|(x,z-z')|^{2(k'+1)}= &(|x|^2+|z-z'|^2)^{(k'+1)}
\leq  C_2 ( |x|^{2(k'+1)}+ |z-z'|^{2(k'+1)}) \\  \leq & C_2(|x|^{2(k'+1)}+(k'+1)|z-z'|^2).\end{aligned}$$
 Then when $|(\zeta, \varsigma)|$ is small enough, we have  if $\tilde{\delta}\geq \frac{l}{2}+\frac{h-2}{2(k'+1)}$, $
u(x,z)\in L^{\frac{h+l(k'+1)}{h+l(k'+1)-2}(1+\delta)}_{loc}( \mathbb{R}^n)$ for some $\delta>0$;
 if $\tilde{\delta}<\frac{l}{2}+\frac{h-2}{2(k'+1)}$,
 $$\begin{aligned}
| u(x,z)|\leq&  \frac{C}{ |(x, z-z')|^{l(k'+1)+h-2-2(k'+1)\tilde{\delta}}}.
 \end{aligned}$$

Since $\tilde{\delta}> \frac{lk'}{2(k'+1)}\frac{h+l(k'+1)-2}{h+l(k'+1)}$,
 there is a $\delta>0$ such that $$({l(k'+1)+h-2-2(k'+1)\tilde{\delta}})\times{ \frac{(h+l(k'+1))(1+\delta)}{h+l(k'+1)-2} }< n,$$ which yields
$u(x,z)\in L^{\frac{h+l(k'+1)}{h+l(k'+1)-2}(1+\delta)}_{loc}( \mathbb{R}^n)\ \text{for}\ \text{some}  \; \delta >0.$
Finally, by Theorem \ref{thm4.1}, we conclude.
\end{proof}
\begin{proof}[\textbf{proof of Corollary \ref{Thm1.7}}]
It's obviously that \eqref{eq:28} satisfies the condition in Theorem \ref{Thm5.1}, \ref{Thmholderctn1} with $$p=2<3\leq n\leq \tilde{v}, \ c_0=1, \ a=1,$$ $$b(x,z)=0,\ c(x,z)=0,\   e(x,z)=0, \ d(x,z)=0, \ g(x,z)=0.$$
Then through Theorem \ref{Thm5.1}, \ref{Thmholderctn1}, we conclude.
\end{proof}

\subsection*{Acknowledgments}
The authors are grateful to the referees for their careful reading and valuable comments.

\end{document}